\documentclass[11pt]{article}%
\usepackage[T1]{fontenc}
\usepackage{textcomp}
\usepackage{cite}
\usepackage{amsmath}
\usepackage{amsfonts}
\usepackage{amssymb}
\usepackage{graphicx}
\usepackage{hyperref}%
\setcounter{MaxMatrixCols}{30}

\providecommand{\U}[1]{\protect\rule{.1in}{.1in}}

\newtheorem{theorem}{Theorem}[section]

\newtheorem{corollary}[theorem]{Corollary}

\newtheorem{definition}[theorem]{Definition}

\newtheorem{lemma}[theorem]{Lemma}

\newtheorem{proposition}[theorem]{Proposition}
\newtheorem{remark}[theorem]{Remark}

\newenvironment{proof}[1][Proof]{\noindent\textbf{#1.} }{\ \rule{0.5em}{0.5em}}
\hoffset -20mm \topmargin= -1cm \raggedbottom
\setlength{\textwidth}{17.3cm} \setlength{\textheight}{21.82cm}
\begin{document}

\title{Asymptotic behavior as $p\rightarrow\infty$ of least energy solutions of a
$(p,q(p))$-Laplacian problem}
\author{C.O. Alves$^{\text{\thinspace a}}$, G. Ercole$^{\text{\thinspace b}}%
$\thanks{Corresponding author}\thinspace\ and G.A. Pereira$^{\text{\thinspace
b}}$\\{\small {$^{\mathrm{a}}$} Universidade Federal de Campina Grande, Campina
Grande, PB, 58.109-970, Brazil. }\\{\small E-mail: coalves@mat.ufcg.edu.br}\\{\small {$^{\mathrm{b}}$} Universidade Federal de Minas Gerais, Belo
Horizonte, MG, 30.123-970, Brazil.}\\{\small E-mails: grey@mat.ufmg.br and gilbertoapereira@yahoo.com.br}}
\maketitle

\begin{abstract}
We study the asymptotic behavior, as $p\rightarrow\infty,$ of the least energy
solutions of the problem%
\[
\left\{
\begin{array}
[c]{lll}%
-\left(  \Delta_{p}+\Delta_{q(p)}\right)  u=\lambda_{p}\left\vert
u(x_{u})\right\vert ^{p-2}u(x_{u})\delta_{x_{u}} & \mathrm{in} & \Omega\\
u=0 & \mathrm{on} & \partial\Omega,
\end{array}
\right.
\]
where $x_{u}$ is the (unique) maximum point of $\left\vert u\right\vert ,$
$\delta_{x_{u}}$ is the Dirac delta distribution supported at $x_{u},$
\[
\lim_{p\rightarrow\infty}\frac{q(p)}{p}=Q\in\left\{
\begin{array}
[c]{lll}%
(0,1) & \mathrm{if} & N<q(p)<p\\
(1,\infty) & \mathrm{if} & N<p<q(p)
\end{array}
\right.
\]
and $\lambda_{p}>0$ is such that
\[
\min\left\{  \frac{\left\Vert \nabla u\right\Vert _{\infty}}{\left\Vert
u\right\Vert _{\infty}}:0\not \equiv u\in W^{1,\infty}(\Omega)\cap
C_{0}(\overline{\Omega})\right\}  \leq\lim_{p\rightarrow\infty}(\lambda
_{p})^{\frac{1}{p}}<\infty.
\]

\end{abstract}

\noindent\textbf{2010 AMS Classification.} 35B40, 35D40, 35J20, 35J92, 46E35.

\noindent\textbf{Keywords:} Asymptotic behavior, Dirac delta, Infinity
Laplacian, Nehari set, Viscosity solution.

\section{Introduction}

In this paper we first study, in Section \ref{sec2}, the existence of
nonnegative least energy solutions for the Dirichlet problem
\begin{equation}
\left\{
\begin{array}
[c]{lll}%
-(\Delta_{p}+\Delta_{q})u=\lambda\left\Vert u\right\Vert _{r}^{p-r}\left\vert
u\right\vert ^{r-2}u & \mathrm{in} & \Omega\\
u=0 & \mathrm{on} & \partial\Omega,
\end{array}
\right.  \label{pqu}%
\end{equation}
where $\Omega$ is a smooth bounded domain of $\mathbb{R}^{N},$ $N\geq2,$
\[
(\Delta_{p}+\Delta_{q})u:=\operatorname{div}\left[  \left(  \left\vert \nabla
u\right\vert ^{p-2}+\left\vert \nabla u\right\vert ^{q-2}\right)  \nabla
u\right]
\]
is the $(p,q)$-Laplacian operator, $\lambda>0$ and $1\leq r<\infty.$ (In the
whole paper we denote by $\left\Vert \cdot\right\Vert _{s}$ the standard norm
of the Lebesgue space $L^{s}(\Omega),$ with $1\leq s\leq\infty$).

Our main results, inspired by the recent papers \cite{BM16} and \cite{EP16},
are presented in Sections \ref{sec3} and \ref{sec4}.

In Section \ref{sec3} we show the limit problem of (\ref{pqu}) as
$r\rightarrow\infty$ is the following%
\begin{equation}
\left\{
\begin{array}
[c]{lll}%
-(\Delta_{p}+\Delta_{q})u=\lambda\left\vert u(x_{u})\right\vert ^{p-2}%
u(x_{u})\delta_{x_{u}} & \mathrm{in} & \Omega\\
u=0 & \mathrm{on} & \partial\Omega,
\end{array}
\right.  \label{lpq}%
\end{equation}
where $x_{u}$ is the (unique) maximum point of $\left\vert u\right\vert $ and
$\delta_{x_{u}}$ is the Dirac delta distribution supported at $x_{u}.$

More precisely, we prove in Proposition \ref{existpqinf} that if
$\lambda>\lambda_{\infty}(p),$ where%
\begin{equation}
\lambda_{\infty}(p):=\min\left\{  \frac{\left\Vert \nabla u\right\Vert
_{p}^{p}}{\left\Vert u\right\Vert _{\infty}^{p}}:u\in W_{0}^{1,p}%
(\Omega)\setminus\left\{  0\right\}  \right\}  , \label{lampinf}%
\end{equation}
and $u_{n}$ denotes a nonnegative least energy solution of (\ref{pqu}) for
$r=r_{n}\rightarrow\infty,$ then there exists a subsequence of $\left\{
u_{n}\right\}  $ converging strongly in $X_{p,q}:=W_{0}^{1,\max\{p,q\}}%
(\Omega)$ to a nonnegative least energy solution of (\ref{lpq}).

Least energy solutions for (\ref{lpq}) are defined in this paper as the
minimizers of the energy functional
\[
J_{\lambda}(u)=\frac{1}{p}\left\Vert \nabla u\right\Vert _{p}^{p}+\frac{1}%
{q}\left\Vert \nabla u\right\Vert _{q}^{q}-\frac{\lambda}{p}\left\Vert
u\right\Vert _{\infty}^{p},
\]
either on $W_{0}^{1,q}(\Omega),$ if $N<p<q<\infty,$ or on the "Nehari set"%
\[
\mathcal{N}_{\lambda,\infty}:=\left\{  u\in W_{0}^{1,p}(\Omega):\left\Vert
\nabla u\right\Vert _{p}^{p}+\left\Vert \nabla u\right\Vert _{q}^{q}%
=\lambda\left\Vert u\right\Vert _{\infty}^{p}\right\}  ,
\]
if $N<q<p<\infty.$

Although not differentiable, the functional $u\mapsto\left\Vert u\right\Vert
_{\infty}^{p}$ has right Gateaux derivative at any $u\in C(\overline{\Omega
}).$ Using this fact we show in Proposition \ref{wsol} that the least energy
solutions of (\ref{lpq}) are weak solutions of this problem. It is simple to
verify (see Remark \ref{nonex}) that (\ref{lpq}) cannot have weak solutions
when $\lambda\leq\lambda_{\infty}(p).$

In Section \ref{sec4}, we consider $q=q(p),$ with%
\begin{equation}
\lim_{p\rightarrow\infty}\frac{q(p)}{p}=:Q\in\left\{
\begin{array}
[c]{lll}%
(0,1) & \mathrm{if} & N<q(p)<p\\
(1,\infty) & \mathrm{if} & N<p<q(p),
\end{array}
\right.  \label{defQ}%
\end{equation}
and fix $\Lambda\geq\Lambda_{\infty},$ where%
\begin{equation}
\Lambda_{\infty}:=\min\left\{  \frac{\left\Vert \nabla u\right\Vert _{\infty}%
}{\left\Vert u\right\Vert _{\infty}}:0\not \equiv u\in W^{1,\infty}%
(\Omega)\cap C_{0}(\overline{\Omega})\right\}  \label{Laminf}%
\end{equation}
and
\[
C_{0}(\overline{\Omega}):=\left\{  u\in C(\overline{\Omega}):u=0\,\mathrm{on}%
\,\partial\Omega\right\}  .
\]

Then, taking $\lambda_{p}>0$ satisfying%
\[
\lim_{p\rightarrow\infty}(\lambda_{p})^{\frac{1}{p}}=\Lambda\geq
\Lambda_{\infty}%
\]
we study the asymptotic behavior, as $p\rightarrow\infty,$ of the least energy
solutions $u_{p}$ of
\begin{equation}
\left\{
\begin{array}
[c]{lll}%
-(\Delta_{p}+\Delta_{q(p)})u=\lambda_{p}\left\vert u(x_{u})\right\vert
^{p-2}u(x_{u})\delta_{x_{u}} & \mathrm{in} & \Omega\\
u=0 & \mathrm{on} & \partial\Omega.
\end{array}
\right.  \label{pqp}%
\end{equation}

After deriving suitable estimates for $u_{p}$ in $W_{0}^{1,m}(\Omega),$ for
each $m>N,$ we use the compactness of the embedding $W_{0}^{1,m}%
(\Omega)\hookrightarrow C(\overline{\Omega})$ to prove that any sequence
$\left\{  u_{p_{n}}\right\}  ,$ with $p_{n}\rightarrow\infty,$ admits a
subsequence converging uniformly in $\overline{\Omega}$ to a function
$u_{\Lambda}\in W^{1,\infty}(\Omega)\cap C_{0}(\overline{\Omega}),$ which is
strictly positive in $\Omega$ and attains its (unique) maximum point at
$x_{\Lambda}\in\Omega.$

Moreover, we prove that $u_{\Lambda}$ is $\infty$-harmonic in the punctured
domain $\Omega\setminus\left\{  x_{\Lambda}\right\}  $, meaning that it
satisfies, in the viscosity sense,
\[%
\begin{array}
[c]{ccc}%
\Delta_{\infty}u_{\Lambda}=0 & \mathrm{in} & \Omega\setminus\left\{
x_{\Lambda}\right\}  ,
\end{array}
\]
where
\[
\Delta_{\infty}u:=\frac{1}{2}\nabla u\cdot\nabla\,\left\vert \nabla
u\right\vert ^{2}%
\]
denotes the $\infty$-Laplacian.

In addition, we show that if either $\Lambda=\Lambda_{\infty}$ or
$\Lambda>\Lambda_{\infty}$ and $Q\in(0,1),$ then $u_{\Lambda}$ realizes the
minimum in (\ref{Laminf}) and satisfies
\[
\left\Vert u_{\Lambda}\right\Vert _{\infty}=\frac{1}{\Lambda_{\infty}}\left(
\frac{\Lambda_{\infty}}{\Lambda}\right)  ^{\frac{1}{1-Q}}\quad\mathrm{and}%
\quad\left\Vert \nabla u_{\Lambda}\right\Vert _{\infty}=\left(  \frac
{\Lambda_{\infty}}{\Lambda}\right)  ^{\frac{1}{1-Q}}.
\]
Hence, taking into account that $\Lambda_{\infty}=(\left\Vert \rho\right\Vert
_{\infty})^{-1},$ where $\rho:\overline{\Omega}\rightarrow\lbrack0,\infty)$
denotes the distance function to the boundary $\partial\Omega,$ we conclude
that%
\[
0\leq u_{\Lambda}(x)\leq\left(  \frac{\Lambda_{\infty}}{\Lambda}\right)
^{\frac{1}{1-Q}}\rho(x),\quad\forall\,x\in\overline{\Omega}%
\]
and%
\[
\rho(x_{\Lambda})=\left\Vert \rho\right\Vert _{\infty}.
\]

These results are gathered in Theorems \ref{uinf} and \ref{diric}, and their
corollaries. In order to show how they fit into the recent literature, let us
provide a brief review on some related problems, involving exponents $p$ and
$q(p),$ with $p\rightarrow\infty.$

We start with a case involving the $p$-Laplacian operator and a simpler
dependence $q(p)=p,$ considered by Juutinen, Lindqvist, and Manfredi in
\cite{JLM}. In that paper, the authors studied the limit problem, as
$p\rightarrow\infty,$ of
\begin{equation}
\left\{
\begin{array}
[c]{lll}%
-\Delta_{p}u=\lambda_{p}(p)\left\vert u\right\vert ^{p-2}u & \mathrm{in} &
\Omega\\
u=0 & \mathrm{on} & \partial\Omega,
\end{array}
\right.  \label{eigenp}%
\end{equation}
where, according to the notation we use in this paper (see (\ref{lrm})),
\[
\lambda_{p}(p):=\min\left\{  \frac{\left\Vert \nabla u\right\Vert _{p}^{p}%
}{\left\Vert u\right\Vert _{p}^{p}}:u\in W_{0}^{1,p}(\Omega)\setminus\left\{
0\right\}  \right\}  .
\]
They first showed that,
\[
\lim_{p\rightarrow\infty}\left(  \lambda_{p}(p)\right)  ^{\frac{1}{p}}%
=\Lambda_{\infty}%
\]
and then, denoting by $u_{p}$ the positive, $L^{p}$-normalized weak solution
of (\ref{eigenp}), proved that any sequence $\left\{  u_{p_{n}}\right\}  ,$
with $p_{n}\rightarrow\infty,$ admits a subsequence converging uniformly in
$\overline{\Omega}$ to a function $u_{\infty}$ which is positive in $\Omega,$
$L^{\infty}$-normalized and solves, in the viscosity sense, the problem%
\begin{equation}
\left\{
\begin{array}
[c]{lll}%
\min\left\{  \left\vert \nabla u\right\vert -\Lambda_{\infty}u,-\Delta
_{\infty}u\right\}  =0 & \mathrm{in} & \Omega\\
u=0 & \mathrm{on} & \partial\Omega.
\end{array}
\right.  \label{infeig}%
\end{equation}

These results were independently obtained by Fukagai, Ito and Narukawa in
\cite{Fuka}, where the asymptotic behavior, as $p\rightarrow\infty$, of the
higher (variational) eigenvalues of the Dirichlet $p$-Laplacian were also
studied. Furthermore, in the recent paper \cite{SRS}, da Silva, Rossi and
Salort showed that (\ref{infeig}) has a unique (up to scalar multiplication)
maximal solution $\widehat{v}\in W^{1,\infty}(\Omega)\cap C_{0}(\overline
{\Omega})$ in the following sense: if $u$ is a nonnegative, $L^{\infty}%
$-normalized viscosity solution of (\ref{infeig}), then $u\leq\widehat{v}.$

Charro and Peral in \cite{ChaPe} ($q(p)<p$), and Charro and Parini in
\cite{ChaPa} ($q(p)>p$), studied the asymptotic behavior, as $p\rightarrow
\infty,$ of the positive weak solutions $u_{p}$ of the problem
\[
\left\{
\begin{array}
[c]{lll}%
-\Delta_{p}u=\lambda_{p}\left\vert u\right\vert ^{q(p)-2}u & \mathrm{in} &
\Omega\\
u=0 & \mathrm{on} & \partial\Omega,
\end{array}
\right.
\]
where $\lambda_{p}>0$ is such that $\lim_{p\rightarrow\infty}(\lambda
_{p})^{\frac{1}{p}}=\Lambda\in(0,\infty).$ A consequence of the results proved
in these papers is that the limit functions of the family $\left\{
u_{p}\right\}  ,$ as $p\rightarrow\infty,$ are viscosity solutions of the
problem
\[
\left\{
\begin{array}
[c]{lll}%
\min\left\{  \left\vert \nabla u\right\vert -\Lambda(u^{Q}),-\Delta_{\infty
}u\right\}  =0 & \mathrm{in} & \Omega\\
u=0 & \mathrm{on} & \partial\Omega,
\end{array}
\right.
\]
where here and in what follows $Q$ is given by (\ref{defQ}).

In \cite{ChaPa2} Charro and Parini proved that any uniform limit, as
$p\rightarrow\infty,$ of a sequence of positive weak solutions of the problem
\[
\left\{
\begin{array}
[c]{lll}%
-\Delta_{p}u=\lambda_{p}\left\vert u\right\vert ^{p-2}u+\left\vert
u\right\vert ^{q(p)-2}u & \mathrm{in} & \Omega\\
u=0 & \mathrm{on} & \partial\Omega,
\end{array}
\right.
\]
where $\lambda_{p}>0$ is such that $\lim_{p\rightarrow\infty}(\lambda
_{p})^{\frac{1}{p}}=\Lambda\in\lbrack0,\Lambda_{\infty}],$ must be a viscosity
solution of the problem
\[
\left\{
\begin{array}
[c]{lll}%
\min\left\{  \left\vert \nabla u\right\vert -\max\left\{  \Lambda
u,(u^{Q})\right\}  ,-\Delta_{\infty}u\right\}  =0 & \mathrm{in} & \Omega\\
u=0 & \mathrm{on} & \partial\Omega.
\end{array}
\right.
\]

Bocea and Mih\u{a}ilescu considered in \cite{BM16} the family $\left\{
u_{p}\right\}  $ of nonnegative least energy solutions of the problem
\[
\left\{
\begin{array}
[c]{lll}%
-(\Delta_{p}+\Delta_{q(p)})u=\lambda_{p}\left\vert u\right\vert ^{p-2}u &
\mathrm{in} & \Omega\\
u=0 & \mathrm{on} & \partial\Omega,
\end{array}
\right.
\]
where $\lambda_{p}>0$ is such that $\lim_{p\rightarrow\infty}(\lambda
_{p})^{\frac{1}{p}}=\Lambda\geq\Lambda_{\infty}.$ They proved that the uniform
limit, as $p\rightarrow\infty,$ of a sequence of $\left\{  u_{p}\right\}  $
solves, in the viscosity sense, the problem%
\[
\left\{
\begin{array}
[c]{lll}%
\min\left\{  \max\left\{  \left\vert \nabla u\right\vert ,\left\vert \nabla
u\right\vert ^{Q}\right\}  -\Lambda u,-\Delta_{\infty}u\right\}  =0 &
\mathrm{in} & \Omega\\
u=0 & \mathrm{on} & \partial\Omega.
\end{array}
\right.
\]

Ercole and Pereira, in \cite{EP16}, showed that
\[
\lim_{p\rightarrow\infty}(\lambda_{\infty}(p))^{\frac{1}{p}}=\Lambda_{\infty}%
\]
and proved that any positive minimizer $u_{p}$ in (\ref{lampinf}) has a unique
maximum point $x_{p}$ and is a weak solution of the problem%
\[
\left\{
\begin{array}
[c]{lll}%
-\Delta_{p}u=\lambda_{\infty}(p)\left\vert u(x_{p})\right\vert ^{p-2}%
u_{p}(x_{p})\delta_{x_{p}} & \mathrm{in} & \Omega\\
u=0 & \mathrm{on} & \partial\Omega,
\end{array}
\right.
\]
where $\delta_{x_{p}}$ denotes the Dirac delta distribution supported at
$x_{p}$ (note that $q(p)=p$). Furthermore, they proved that any normalized
sequence $\left\{  u_{p_{n}}/\left\Vert u_{p_{n}}\right\Vert _{\infty
}\right\}  ,$ with $p_{n}\rightarrow\infty,$ admits a subsequence converging
uniformly in $\overline{\Omega}$ to a function $w_{\infty}\in W^{1,\infty
}(\Omega)\cap C_{0}(\overline{\Omega}),$ which is positive in $\Omega$ and
assumes its maximum value $1$ at a unique point $x_{\ast}\in\Omega.$ Moreover,
$w_{\infty}$ realizes the minimum in (\ref{Laminf}) and satisfies%
\[
\left\{
\begin{array}
[c]{lll}%
\Delta_{\infty}u=0 & \mathrm{in} & \Omega\setminus\left\{  x_{\ast}\right\} \\
u=\rho/\left\Vert \rho\right\Vert _{\infty} & \mathrm{on} & \partial\left(
\Omega\setminus\left\{  x_{\ast}\right\}  \right)  =\partial\Omega\cup\left\{
x_{\ast}\right\}
\end{array}
\right.
\]
in the viscosity sense.

\section{Existence for $1\leq r<q^{\star}$ and $\lambda>\lambda_{r}%
(p)\label{sec2}$}

We recall that the embedding $W_{0}^{1,m}(\Omega)\hookrightarrow L^{r}%
(\Omega)$ is compact whenever%
\[
1\leq r<m^{\star}:=\left\{
\begin{array}
[c]{lll}%
\frac{Nm}{N-m} & \mathrm{if} & 1<m<N\\
\infty & \mathrm{if} & N\leq m.
\end{array}
\right.
\]

Thus, the Rayleigh quotient associated with this embedding assumes its minimum
value, which is positive:
\begin{equation}
0<\lambda_{r}(m):=\min\left\{  \frac{\left\Vert \nabla u\right\Vert _{m}^{m}%
}{\left\Vert u\right\Vert _{r}^{m}}:u\in W_{0}^{1,m}(\Omega)\setminus\left\{
0\right\}  \right\}  ,\quad1\leq r<m^{\star}. \label{lrm}%
\end{equation}

In this section we consider, in the Sobolev space
\[
X_{p,q}:=W_{0}^{1,\max\left\{  p,q\right\}  }(\Omega),
\]
the boundary value problem
\begin{equation}
\left\{
\begin{array}
[c]{lll}%
-(\Delta_{p}+\Delta_{q})u=\lambda\left\Vert u\right\Vert _{r}^{p-r}\left\vert
u\right\vert ^{r-2}u & \mathrm{in} & \Omega\\
u=0 & \mathrm{on} & \partial\Omega,
\end{array}
\right.  \label{Plam}%
\end{equation}
where $1\leq p,q<\infty,$ $p\not =q$ and $1\leq r<q^{\star}.$

The energy functional $I_{\lambda,r}:X_{p,q}\rightarrow\mathbb{R}$ associated
with (\ref{Plam}) is given by%
\[
I_{\lambda,r}(u):=\frac{1}{p}\left\Vert \nabla u\right\Vert _{p}^{p}+\frac
{1}{q}\left\Vert \nabla u\right\Vert _{q}^{q}-\frac{\lambda}{p}\left\Vert
u\right\Vert _{r}^{p}.
\]
It belongs to $C^{1}(X_{p,q})$ and its Gateaux derivative is expressed as
\[
\left\langle I_{\lambda,r}^{\prime}(u),v\right\rangle :=%
{\displaystyle\int_{\Omega}}
\left(  \left\vert \nabla u\right\vert ^{p-2}+\left\vert \nabla u\right\vert
^{q-2}\right)  \nabla u\cdot\nabla v\mathrm{d}x-\lambda\left\Vert u\right\Vert
_{r}^{p-r}%
{\displaystyle\int_{\Omega}}
\left\vert u\right\vert ^{r-2}uv\mathrm{d}x,\quad\forall v\in X_{p,q}.
\]

\begin{definition}
We say that $u\in X_{p,q}$ is a weak solution of \eqref{Plam} if
\[
\left\langle I_{\lambda,r}^{\prime}(u),v\right\rangle =0\quad\forall v\in
X_{p,q}.
\]

\end{definition}

We remark that a nontrivial weak solution of (\ref{Plam}) cannot exist if
$\lambda\leq\lambda_{r}(p).$ In fact, such a weak solution $u$ would satisfy
\[
\lambda\left\Vert u\right\Vert _{r}^{p}=\left\Vert \nabla u\right\Vert
_{p}^{p}+\left\Vert \nabla u\right\Vert _{q}^{q}>\left\Vert \nabla
u\right\Vert _{p}^{p}\geq\lambda_{r}(p)\left\Vert u\right\Vert _{r}^{p},
\]
so that $\left(  \lambda-\lambda_{r}(p)\right)  \left\Vert u\right\Vert
_{r}^{p}>0.$

We show in the sequel that the functional $I_{\lambda,r}$ has a global
minimizer whenever $1<p<q<\infty$. Thus, it is clear that such a minimizer is
a weak solution of (\ref{Plam}), since it must be a critical point of
$I_{\lambda,r}.$

In the case $1<q<p<\infty$ the functional $I_{\lambda,r}$ is not globally
bounded from below. In fact, if $e_{r}\in W_{0}^{1,p}(\Omega)$ is such that
\begin{equation}
\left\Vert e_{r}\right\Vert _{r}=1\quad\mathrm{and}\quad\left\Vert \nabla
e_{r}\right\Vert _{p}^{p}=\lambda_{r}(p), \label{er}%
\end{equation}
then
\[
I_{\lambda,r}(te_{r})=\frac{t^{q}}{q}\left\Vert \nabla e_{r}\right\Vert
_{q}^{q}-t^{p}\frac{\left(  \lambda-\lambda_{r}(p)\right)  }{p}\rightarrow
-\infty\quad\mathrm{as}\quad t\rightarrow\infty.
\]
However, as we will see soon, in this case the functional $I_{\lambda,r}$
assumes the minimum value on the Nehari manifold defined by
\[
\mathcal{N}_{\lambda,r}:=\left\{  u\in X_{p,q}\setminus\left\{  0\right\}
:\left\langle I_{\lambda,r}^{\prime}(u),u\right\rangle =0\right\}  =\left\{
u\in X_{p,q}\setminus\left\{  0\right\}  :\left\Vert \nabla u\right\Vert
_{p}^{p}+\left\Vert \nabla u\right\Vert _{q}^{q}=\lambda\left\Vert
u\right\Vert _{r}^{p}\right\}  .
\]

Note that if $u\in\mathcal{N}_{\lambda,r}$ then%
\begin{align*}
I_{\lambda,r}(u)  &  =\frac{1}{p}\left\Vert \nabla u\right\Vert _{p}^{p}%
+\frac{1}{q}\left\Vert \nabla u\right\Vert _{q}^{q}-\frac{\lambda}%
{p}\left\Vert u\right\Vert _{r}^{p}\\
&  =\frac{1}{p}\left\Vert \nabla u\right\Vert _{p}^{p}+\frac{1}{q}\left\Vert
\nabla u\right\Vert _{q}^{q}-\frac{1}{p}(\left\Vert \nabla u\right\Vert
_{p}^{p}+\left\Vert \nabla u\right\Vert _{q}^{q})\\
&  =\left(  \frac{1}{q}-\frac{1}{p}\right)  \left\Vert \nabla u\right\Vert
_{q}^{q}.
\end{align*}
Moreover, it follows from the identity
\[
\left\langle I_{\lambda,r}^{\prime}(tv),tv\right\rangle =t^{q}\left[
\left\Vert \nabla v\right\Vert _{q}^{q}-t^{p-q}\left(  \lambda\left\Vert
v\right\Vert _{r}^{p}-\left\Vert \nabla v\right\Vert _{p}^{p}\right)  \right]
,\quad v\in X_{p,q},\quad\,t>0,
\]
that if $v\in X_{p,q}\setminus\left\{  0\right\}  ,$ then $tv\in
\mathcal{N}_{\lambda,r}$ (for some $t>0$) if, and only if,%
\begin{equation}
\left\Vert \nabla v\right\Vert _{p}^{p}<\lambda\left\Vert v\right\Vert
_{r}^{p}\quad\mathrm{and}\quad t=\left(  \frac{\left\Vert \nabla v\right\Vert
_{q}^{q}}{\lambda\left\Vert v\right\Vert _{r}^{p}-\left\Vert \nabla
v\right\Vert _{p}^{p}}\right)  ^{\frac{1}{p-q}}. \label{vNeh}%
\end{equation}

A first consequence of this fact is that $\mathcal{N}_{\lambda,r}$ is not
empty, since
\[
\left\Vert \nabla e_{r}\right\Vert _{p}^{p}=\lambda_{r}(p)<\lambda
=\lambda\left\Vert e_{r}\right\Vert _{r}^{p}.
\]

For the sake of completeness we show now that a minimizer of $I_{\lambda,r}$
on $\mathcal{N}_{\lambda,r}$ is also a weak solution of (\ref{Plam}) whenever
$1<q<p<\infty.$

\begin{proposition}
Suppose that $1<q<p<\infty$ and that $u_{\lambda}\in\mathcal{N}_{\lambda,r}$
is such that $I_{\lambda,r}(u_{\lambda})\leq I_{\lambda,r}(v)$ for all
$v\in\mathcal{N}_{\lambda,r}.$ Then $u_{\lambda}$ is a weak solution of \eqref{Plam}.
\end{proposition}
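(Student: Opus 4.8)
The plan is to treat $u_{\lambda}$ as a constrained minimizer and invoke the Lagrange multiplier rule, showing that the multiplier must vanish. Introduce the constraint functional $G(u):=\left\langle I_{\lambda,r}^{\prime}(u),u\right\rangle =\left\Vert \nabla u\right\Vert _{p}^{p}+\left\Vert \nabla u\right\Vert _{q}^{q}-\lambda\left\Vert u\right\Vert _{r}^{p}$, so that $\mathcal{N}_{\lambda,r}=\{u\neq0:G(u)=0\}$. Since $I_{\lambda,r}\in C^{1}(X_{p,q})$, the functional $G$ is also of class $C^{1}$, and a direct differentiation gives
\[
\left\langle G^{\prime}(u),v\right\rangle =p\int_{\Omega}\left\vert \nabla u\right\vert ^{p-2}\nabla u\cdot\nabla v\,\mathrm{d}x+q\int_{\Omega}\left\vert \nabla u\right\vert ^{q-2}\nabla u\cdot\nabla v\,\mathrm{d}x-\lambda p\left\Vert u\right\Vert _{r}^{p-r}\int_{\Omega}\left\vert u\right\vert ^{r-2}uv\,\mathrm{d}x.
\]

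First I would verify that $u_{\lambda}$ is a regular point of $G$, which is the step that makes the whole argument run. Testing $G^{\prime}$ against $u_{\lambda}$ yields $\left\langle G^{\prime}(u_{\lambda}),u_{\lambda}\right\rangle =p\left\Vert \nabla u_{\lambda}\right\Vert _{p}^{p}+q\left\Vert \nabla u_{\lambda}\right\Vert _{q}^{q}-\lambda p\left\Vert u_{\lambda}\right\Vert _{r}^{p}$, and substituting the Nehari identity $\lambda\left\Vert u_{\lambda}\right\Vert _{r}^{p}=\left\Vert \nabla u_{\lambda}\right\Vert _{p}^{p}+\left\Vert \nabla u_{\lambda}\right\Vert _{q}^{q}$ collapses this to $\left\langle G^{\prime}(u_{\lambda}),u_{\lambda}\right\rangle =(q-p)\left\Vert \nabla u_{\lambda}\right\Vert _{q}^{q}$. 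Because $q<p$ and $u_{\lambda}\neq0$ forces $\left\Vert \nabla u_{\lambda}\right\Vert _{q}^{q}>0$, this quantity is strictly negative; in particular $G^{\prime}(u_{\lambda})\neq0$ in $(X_{p,q})^{\ast}$, so $\mathcal{N}_{\lambda,r}$ is a $C^{1}$ manifold of codimension one near $u_{\lambda}$.

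With regularity in hand, the Lagrange multiplier theorem applies: there is $\mu\in\mathbb{R}$ with $I_{\lambda,r}^{\prime}(u_{\lambda})=\mu\,G^{\prime}(u_{\lambda})$. To identify $\mu$, I would evaluate this identity at $v=u_{\lambda}$. The left-hand side is $\left\langle I_{\lambda,r}^{\prime}(u_{\lambda}),u_{\lambda}\right\rangle =G(u_{\lambda})=0$, since $u_{\lambda}\in\mathcal{N}_{\lambda,r}$, while the right-hand side equals $\mu(q-p)\left\Vert \nabla u_{\lambda}\right\Vert _{q}^{q}$. As the factor $(q-p)\left\Vert \nabla u_{\lambda}\right\Vert _{q}^{q}$ is nonzero, we conclude $\mu=0$, whence $I_{\lambda,r}^{\prime}(u_{\lambda})=0$ and $u_{\lambda}$ is a weak solution of \eqref{Plam}.

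The only genuinely substantive point is the nonvanishing of $\left\langle G^{\prime}(u_{\lambda}),u_{\lambda}\right\rangle$, which is exactly where the hypothesis $q<p$ enters and which simultaneously guarantees that $u_{\lambda}$ is a regular constraint point and that the multiplier can be forced to zero. Everything else is the routine Lagrange-multiplier bookkeeping; I would only need to be mild about invoking the smooth-manifold structure of $\mathcal{N}_{\lambda,r}$ near the minimizer, which is automatic once $G^{\prime}(u_{\lambda})\neq0$.
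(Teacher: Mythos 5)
Your proof is correct, and it takes a genuinely different route from the paper's. You run the standard ``natural constraint'' argument via the Banach-space Lagrange multiplier theorem: you check that $u_{\lambda}$ is a regular point of the constraint $G$ by computing $\left\langle G^{\prime}(u_{\lambda}),u_{\lambda}\right\rangle =(q-p)\left\Vert \nabla u_{\lambda}\right\Vert _{q}^{q}<0$, obtain $I_{\lambda,r}^{\prime}(u_{\lambda})=\mu\,G^{\prime}(u_{\lambda})$, and force $\mu=0$ by testing with $u_{\lambda}$. The paper avoids the multiplier theorem entirely: for each fixed direction $v$ it uses the explicit projection formula \eqref{vNeh} to build a differentiable curve $s\mapsto\tau(s)(u_{\lambda}+sv)$ lying on $\mathcal{N}_{\lambda,r}$ with $\tau(0)=1$, and differentiates $\gamma(s)=I_{\lambda,r}\left(\tau(s)(u_{\lambda}+sv)\right)$ at its interior minimum $s=0$; the term $\tau^{\prime}(0)\left\langle I_{\lambda,r}^{\prime}(u_{\lambda}),u_{\lambda}\right\rangle$ vanishes by the Nehari identity, leaving $\left\langle I_{\lambda,r}^{\prime}(u_{\lambda}),v\right\rangle =0$ directly. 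The same cancellation drives both arguments (your identification $\mu=0$ plays exactly the role of the vanishing $\tau^{\prime}(0)$ term), so the difference is in the machinery. Yours is shorter and conceptually clean, at two small costs: first, your justification that $G$ is $C^{1}$ ``since $I_{\lambda,r}\in C^{1}$'' is glib as stated --- in general $u\mapsto\left\langle I^{\prime}(u),u\right\rangle$ need not be $C^{1}$ for a $C^{1}$ functional $I$; here it works only because $G$ is itself the same kind of combination of norm powers as $I_{\lambda,r}$, and you should say so. Second, the paper's hand-built fibering argument is not an accident of taste: it is precisely the technique that survives in Proposition \ref{wsol}, where the analogous energy $J_{\lambda}$ contains the non-differentiable term $\left\Vert u\right\Vert _{\infty}^{p}$, the constraint functional is no longer $C^{1}$, the multiplier theorem is unavailable, and only the explicit curve $\tau(s)(u+sv)$ with one-sided derivatives goes through. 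So your approach buys brevity where smoothness holds; the paper's buys a template that extends to the limit problem that is the point of the article.
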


\begin{proof}
Since $u_{\lambda}\in\mathcal{N}_{\lambda,r}$ we have $\left\Vert \nabla
u_{\lambda}\right\Vert _{p}^{p}<\left\Vert \nabla u_{\lambda}\right\Vert
_{p}^{p}+\left\Vert \nabla u_{\lambda}\right\Vert _{q}^{q}=\lambda\left\Vert
u_{\lambda}\right\Vert _{r}^{p}.$ Hence, for a fixed $v\in X_{p,q}$ we can
take $\delta>0$ such that $u_{\lambda}+sv\not \equiv 0$ and%
\[
\left\Vert \nabla(u_{\lambda}+sv)\right\Vert _{p}^{p}<\lambda\left\Vert
u_{\lambda}+sv\right\Vert _{r}^{p},\quad\forall s\in(-\delta,\delta).
\]

Let $\tau:(-\delta,\delta)\rightarrow(0,\infty)$ be the differentiable
function given by%
\[
\tau(s)=\left(  \frac{\left\Vert \nabla(u_{\lambda}+sv)\right\Vert _{q}^{q}%
}{\lambda\left\Vert u_{\lambda}+sv\right\Vert _{r}^{p}-\left\Vert
\nabla(u_{\lambda}+sv)\right\Vert _{p}^{p}}\right)  ^{\frac{1}{p-q}}.
\]
We can see from (\ref{vNeh}) that $\tau(s)(u_{\lambda}+sv)\in\mathcal{N}%
_{\lambda,r}$ for all $s\in(-\delta,\delta)$ and that $\tau(0)=1$ (since
$u_{\lambda}\in\mathcal{N}_{\lambda,r}$).

Taking into account that the differentiable function $\gamma:(-\delta
,\delta)\rightarrow\mathbb{R},$ defined by
\[
\gamma(s)=I_{\lambda,r}\left(  \tau(s)(u_{\lambda}+sv)\right)  ,
\]
attains its minimum value at $s=0,$ we have
\begin{align*}
0  &  =\gamma^{\prime}(0)\\
&  =\left\langle I_{\lambda,r}^{\prime}(u_{\lambda}),\tau^{\prime
}(0)u_{\lambda}+\tau(0)v)\right\rangle =\tau^{\prime}(0)\left\langle
I_{\lambda,r}^{\prime}(u_{\lambda}),u_{\lambda})\right\rangle +\tau
(0)\left\langle I_{\lambda,r}^{\prime}(u_{\lambda}),v)\right\rangle
=\left\langle I_{\lambda,r}^{\prime}(u_{\lambda}),v)\right\rangle .
\end{align*}

\end{proof}

\begin{definition}
We say that a function $u\in X_{p,q}$ is a least energy solution of
\eqref{Plam} if it minimizes the functional $I_{\lambda,r}$ either on
$X_{p,q}\setminus\left\{  0\right\}  $ in the case $1<p<q<\infty,$ or on
$\mathcal{N}_{\lambda,r}$ in the case $1<q<p<\infty.$
\end{definition}

Our main goal in this section is to prove that (\ref{Plam}) has at least one
nonnegative least energy solution. We assume that $1\leq r<q^{\star}$ and
$\lambda>\lambda_{r}(p).$

\begin{proposition}
Suppose that $1<p,q<\infty$ ($p\not =q$), $1\leq r<q^{\star}$ and
$\lambda>\lambda_{r}(p).$ The problem \eqref{Plam} has at least one
nonnegative least energy solution $u_{\lambda}$.
\end{proposition}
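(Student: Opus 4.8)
The plan is to establish existence of a nonnegative least energy solution by the direct method of the calculus of variations, treating the two cases $1<p<q<\infty$ and $1<q<p<\infty$ in parallel but with different functionals (the full energy $I_{\lambda,r}$ in the first case, its restriction to $\mathcal{N}_{\lambda,r}$ in the second). Since the problem is invariant under $u\mapsto|u|$ (the gradient norms and the $L^r$ norm are unchanged when passing to $|u|$), I may work with nonnegative competitors throughout, so that any minimizer can be taken nonnegative.

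First I would treat the case $1<p<q<\infty$. Here $\max\{p,q\}=q$, so $X_{p,q}=W_0^{1,q}(\Omega)$ and the leading gradient term is $\frac{1}{q}\|\nabla u\|_q^q$. I would show $I_{\lambda,r}$ is coercive and bounded below on $X_{p,q}$: using the compact embedding $W_0^{1,q}(\Omega)\hookrightarrow L^r(\Omega)$ (valid since $r<q^\star$) together with the Poincaré-type inequality, the negative term $\frac{\lambda}{p}\|u\|_r^p$ grows only like $\|\nabla u\|_q^p$, and since $p<q$ this is dominated by $\frac{1}{q}\|\nabla u\|_q^q$ for large $\|\nabla u\|_q$; hence $I_{\lambda,r}(u)\to+\infty$ as $\|u\|_{X_{p,q}}\to\infty$. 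Taking a minimizing sequence $\{u_n\}$, coercivity gives boundedness in $X_{p,q}$, so along a subsequence $u_n\rightharpoonup u_\lambda$ weakly in $X_{p,q}$ and, by compactness, $u_n\to u_\lambda$ strongly in $L^r(\Omega)$. Weak lower semicontinuity of $u\mapsto\|\nabla u\|_p^p+\frac{p}{q}\cdot\frac{q}{p}\|\nabla u\|_q^q$ (both gradient terms are convex and hence weakly lower semicontinuous) combined with $\|u_n\|_r^p\to\|u_\lambda\|_r^p$ yields $I_{\lambda,r}(u_\lambda)\le\liminf I_{\lambda,r}(u_n)$, so $u_\lambda$ is a global minimizer. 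That the minimizer is nontrivial follows from $\lambda>\lambda_r(p)$: evaluating $I_{\lambda,r}$ along the ray $te_r$ (with $e_r$ as in (\ref{er})) gives $I_{\lambda,r}(te_r)=\frac{t^p}{p}(\lambda_r(p)-\lambda)+\frac{t^q}{q}\|\nabla e_r\|_q^q$, which is negative for small $t>0$ since $\lambda_r(p)-\lambda<0$ and $p<q$; hence $\inf I_{\lambda,r}<0=I_{\lambda,r}(0)$, so $u_\lambda\not\equiv0$. Being a global minimizer on all of $X_{p,q}$, it is a critical point and thus a weak solution.

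Next I would treat the case $1<q<p<\infty$, where the excerpt has already done much of the preparatory work. Here I minimize $I_{\lambda,r}$ over $\mathcal{N}_{\lambda,r}$, which is nonempty by the computation following (\ref{vNeh}). On the Nehari manifold the energy reduces to $I_{\lambda,r}(u)=\left(\frac{1}{q}-\frac{1}{p}\right)\|\nabla u\|_q^q\ge0$, so the infimum $c:=\inf_{\mathcal{N}_{\lambda,r}}I_{\lambda,r}$ is finite and nonnegative; I would first check $c>0$ by showing $\mathcal{N}_{\lambda,r}$ is bounded away from $0$ in $X_{p,q}$ (if $u\in\mathcal{N}_{\lambda,r}$ then $\lambda\|u\|_r^p=\|\nabla u\|_p^p+\|\nabla u\|_q^q$, and combining the embedding constants gives a positive lower bound on $\|\nabla u\|_q$). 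Taking a minimizing sequence $\{u_n\}\subset\mathcal{N}_{\lambda,r}$, the identity $I_{\lambda,r}(u_n)=\left(\frac1q-\frac1p\right)\|\nabla u_n\|_q^q$ bounds $\|\nabla u_n\|_q$; to bound $\|\nabla u_n\|_p$ I use the Nehari identity $\|\nabla u_n\|_p^p=\lambda\|u_n\|_r^p-\|\nabla u_n\|_q^q$ together with the embedding $W_0^{1,p}\hookrightarrow L^r$ to close the estimate, so $\{u_n\}$ is bounded in $X_{p,q}=W_0^{1,p}(\Omega)$. Extracting a weak limit $u_\lambda$ with $u_n\to u_\lambda$ strongly in $L^r$, I pass to the limit in the Nehari constraint using weak lower semicontinuity of the gradient terms; the delicate point is to rule out that the constraint degenerates in the limit and to confirm $u_\lambda\in\mathcal{N}_{\lambda,r}$.

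The main obstacle is exactly this last point in the second case: weak lower semicontinuity only gives $\|\nabla u_\lambda\|_p^p+\|\nabla u_\lambda\|_q^q\le\liminf(\|\nabla u_n\|_p^p+\|\nabla u_n\|_q^q)=\lambda\liminf\|u_n\|_r^p=\lambda\|u_\lambda\|_r^p$, i.e.\ an inequality in the wrong direction to conclude membership in $\mathcal{N}_{\lambda,r}$. I would resolve this by the standard Nehari rescaling argument: the limit satisfies $\|\nabla u_\lambda\|_p^p\le\lambda\|u_\lambda\|_r^p$, and combined with nontriviality of $u_\lambda$ (which follows from $c>0$, since $\|\nabla u_n\|_q$ is bounded below) one has $\|\nabla u_\lambda\|_p^p<\lambda\|u_\lambda\|_r^p$ (the strict inequality because the $q$-term is strictly positive in the limit). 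By the criterion in (\ref{vNeh}) there is a unique $t_0>0$ with $t_0u_\lambda\in\mathcal{N}_{\lambda,r}$, and I would show $t_0\le1$ using the subcritical scaling; then $I_{\lambda,r}(t_0u_\lambda)=\left(\frac1q-\frac1p\right)t_0^q\|\nabla u_\lambda\|_q^q\le\left(\frac1q-\frac1p\right)\liminf\|\nabla u_n\|_q^q=\liminf I_{\lambda,r}(u_n)=c$, forcing $t_0u_\lambda$ to be a minimizer and in fact $t_0=1$, $u_n\to u_\lambda$ strongly. Replacing $u_\lambda$ by $|u_\lambda|$ if necessary gives a nonnegative least energy solution, and by the Proposition proved just above in the excerpt it is a weak solution of (\ref{Plam}); this completes both cases.
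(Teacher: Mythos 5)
Your overall route coincides with the paper's: direct minimization of $I_{\lambda,r}$ for $1<p<q$, and minimization on $\mathcal{N}_{\lambda,r}$ with the rescaling argument for $1<q<p$. The first case is handled exactly as in the paper and is fine. In the second case, however, two steps would fail as written. First, your bound on $\|\nabla u_n\|_p$: feeding the embedding $W_0^{1,p}(\Omega)\hookrightarrow L^r(\Omega)$ into the Nehari identity gives only $\|\nabla u_n\|_p^p\le\lambda\|u_n\|_r^p\le\frac{\lambda}{\lambda_r(p)}\|\nabla u_n\|_p^p$, which is vacuous precisely because $\lambda>\lambda_r(p)$ makes the coefficient exceed $1$; the estimate is circular. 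The version that closes is the paper's: use the embedding of $W_0^{1,q}(\Omega)$ into $L^r(\Omega)$ (legitimate since $r<q^\star$) to get $\|\nabla u_n\|_p^p\le\lambda\|u_n\|_r^p\le\lambda\left(\lambda_r(q)\right)^{-p/q}\|\nabla u_n\|_q^p$, whose right-hand side is bounded because the energy along the minimizing sequence controls $\|\nabla u_n\|_q$.

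Second, your justification of $u_\lambda\not\equiv0$ --- that it ``follows from $c>0$, since $\|\nabla u_n\|_q$ is bounded below'' --- is not valid: weak convergence only yields $\|\nabla u_\lambda\|_q\le\liminf_n\|\nabla u_n\|_q$, and a sequence whose norms are bounded away from zero can perfectly well converge weakly to $0$, so $c>0$ by itself does not exclude a trivial weak limit. The mechanism you need is the paper's: the uniform lower bound \eqref{lowr}, $\|u_n\|_r\ge\left(\lambda_r(q)/\lambda\right)^{1/(p-q)}$, survives the \emph{strong} $L^r$ convergence you have already extracted, giving $\|u_\lambda\|_r>0$ directly. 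Once this is repaired, the rest of your outline matches the paper: $u_\lambda\not\equiv0$ gives $\|\nabla u_\lambda\|_q^q>0$ and hence the strict inequality $\|\nabla u_\lambda\|_p^p<\lambda\|u_\lambda\|_r^p$; the inequality $\lambda\|u_\lambda\|_r^p-\|\nabla u_\lambda\|_p^p\ge\|\nabla u_\lambda\|_q^q$, obtained from weak lower semicontinuity, is also where your vague appeal to ``subcritical scaling'' should be made precise, since by \eqref{vNeh} it yields $t_0\le1$; and the chain $c\le I_{\lambda,r}(t_0u_\lambda)=t_0^q\left(\frac{1}{q}-\frac{1}{p}\right)\|\nabla u_\lambda\|_q^q\le t_0^qc\le c$, combined with $c>0$, forces $t_0=1$ and $I_{\lambda,r}(u_\lambda)=c$, exactly as in the paper.
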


\begin{proof}
We start with the case $1<p<q<\infty,$ in which $X_{p,q}=W_{0}^{1,q}(\Omega).$
It is simple to verify that $I_{\lambda,r}$ is bounded from below and coercive.
In fact,%
\begin{align*}
I_{\lambda,r}(u)  &  =\frac{1}{p}\left\Vert \nabla u\right\Vert _{p}^{p}%
+\frac{1}{q}\left\Vert \nabla u\right\Vert _{q}^{q}-\frac{\lambda}%
{p}\left\Vert u\right\Vert _{r}^{p}\\
&  \geq\frac{1}{q}\left\Vert \nabla u\right\Vert _{q}^{q}-\frac{\lambda}%
{p}\left\Vert u\right\Vert _{r}^{p}\\
&  \geq\frac{1}{q}\left\Vert \nabla u\right\Vert _{q}^{q}-\frac{\lambda}%
{p}\left(  \lambda_{r}(q)\right)  ^{-p/q}\left\Vert \nabla u\right\Vert
_{q}^{p}=g(\left\Vert \nabla u\right\Vert _{q}),
\end{align*}
where the function $g:[0,\infty)\longrightarrow\mathbb{R},$ given by
$g(t):=\frac{1}{q}t^{q}-\frac{\lambda\left(  \lambda_{r}(q)\right)  ^{-p/q}%
}{p}t^{p},$ satisfies%
\[
-\infty<\inf\left\{  g(t):t\in\lbrack0,\infty)\right\}  \quad\mathrm{and}%
\quad\lim_{t\rightarrow\infty}g(t)=\infty.
\]

Thus, taking into account that $I_{\lambda,r}$ is also weakly sequentially
lower semi-continuous, there exists $u_{\lambda}\in X_{p,q}$ such that
\[
I_{\lambda,r}(u_{\lambda})=\min\left\{  I_{\lambda,r}(u):u\in X_{p,q}\right\}
.
\]

Noting that $I_{\lambda,r}(u_{\lambda})=I_{\lambda,r}(\left\vert u_{\lambda
}\right\vert )$ we can assume that $u_{\lambda}\geq0$ in $\Omega.$ In order to
show that $u_{\lambda}\not \equiv 0$ it is sufficient to check that
$I_{\lambda,r}$ assumes negative values in $X_{p,q}$ (note that $I_{\lambda
,r}(0)=0$). For this, by using a function $e_{r}\in C^{1}(\overline{\Omega
})\cap W_{0}^{1,p}(\Omega)\subset X_{p,q}$ satisfying (\ref{er}), we have%
\[
I_{\lambda,r}(te_{r})=\frac{t^{q}}{q}\left\Vert \nabla e_{r}\right\Vert
_{q}^{q}-t^{p}\frac{\left(  \lambda-\lambda_{r}(p)\right)  }{p}<0
\]
for all positive $t$ sufficiently small.

Now, we study the case $1<q<p<\infty$ in which $X_{p,q}=W_{0}^{1,p}(\Omega).$

Since $1\leq r<q^{\star}\leq p^{\star}$ (the latter inequality is an equality
only in the case $N\leq q<p$) we have%
\[
\left\Vert u\right\Vert _{r}^{q}\leq\frac{1}{\lambda_{r}(q)}\left\Vert \nabla
u\right\Vert _{q}^{q}\leq\frac{1}{\lambda_{r}(q)}\left(  \left\Vert \nabla
u\right\Vert _{q}^{q}+\left\Vert \nabla u\right\Vert _{p}^{p}\right)
=\frac{\lambda}{\lambda_{r}(q)}\left\Vert u\right\Vert _{r}^{p},\quad\forall
u\in\mathcal{N}_{\lambda,r},
\]
implying that
\begin{equation}
\left\Vert u\right\Vert _{r}\geq\left(  \frac{\lambda_{r}(q)}{\lambda}\right)
^{\frac{1}{p-q}}>0,\quad\forall u\in\mathcal{N}_{\lambda,r}. \label{lowr}%
\end{equation}
It follows that $I_{\lambda,r}$ restricted to $\mathcal{N}_{\lambda,r}$ is
bounded from below by a positive constant:%
\begin{align*}
I_{\lambda,r}(u)  &  =\left(  \frac{1}{q}-\frac{1}{p}\right)  \left\Vert
\nabla u\right\Vert _{q}^{q}\\
&  \geq\left(  \frac{1}{q}-\frac{1}{p}\right)  \lambda_{r}(q)\left\Vert
u\right\Vert _{r}^{q}\geq\left(  \frac{1}{q}-\frac{1}{p}\right)  \lambda
_{r}(q)\left(  \frac{\lambda_{r}(q)}{\lambda}\right)  ^{\frac{q}{p-q}}>0.
\end{align*}

Let us show that
\[
m_{\lambda}:=\inf\left\{  I_{\lambda,r}(u):u\in\mathcal{N}_{\lambda
,r}\right\}
\]
is attained in $\mathcal{N}_{\lambda,r}.$ Let $\left\{  u_{n}\right\}
\subset\mathcal{N}_{\lambda,r}$ be a minimizing sequence, that is,%
\[
I_{\lambda,r}(u_{n})=\left(  \frac{1}{q}-\frac{1}{p}\right)  \left\Vert \nabla
u_{n}\right\Vert _{q}^{q}\rightarrow m_{\lambda}.
\]
It follows that $\left\{  u_{n}\right\}  $ is bounded in $W_{0}^{1,q}(\Omega)$
and hence, taking into account that
\[
\left\Vert \nabla u_{n}\right\Vert _{p}^{p}\leq\left\Vert \nabla
u_{n}\right\Vert _{p}^{p}+\left\Vert \nabla u_{n}\right\Vert _{q}^{q}%
=\lambda\left\Vert u_{n}\right\Vert _{r}^{p}\leq\frac{\lambda}{\lambda
_{r}(q)^{p/q}}\left\Vert \nabla u_{n}\right\Vert _{q}^{p},
\]
we conclude that $\left\{  u_{n}\right\}  $ is also bounded in $W_{0}%
^{1,p}(\Omega).$ Thus, we can assume that, up to a subsequence, $\left\{
u_{n}\right\}  $ converges to a function $u_{\lambda},$ weakly in both spaces
$W_{0}^{1,p}(\Omega)$ and $W_{0}^{1,q}(\Omega),$ and strongly in $L^{r}%
(\Omega).$

It follows from (\ref{lowr}) that%
\[
\left\Vert u_{\lambda}\right\Vert _{r}=\lim_{n\rightarrow\infty}\left\Vert
u_{n}\right\Vert _{r}\geq\left(  \frac{\lambda_{r}(q)}{\lambda}\right)
^{\frac{1}{p-q}}>0,
\]
so that $u_{\lambda}\not \equiv 0.$

Moreover,%
\begin{align*}
\left\Vert \nabla u_{\lambda}\right\Vert _{p}^{p}  &  <\left\Vert \nabla
u_{\lambda}\right\Vert _{p}^{p}+\left\Vert \nabla u_{\lambda}\right\Vert
_{q}^{q}\\
&  \leq\liminf_{n\rightarrow\infty}(\left\Vert \nabla u_{n}\right\Vert
_{p}^{p}+\left\Vert \nabla u_{n}\right\Vert _{q}^{q})=\liminf_{n\rightarrow
\infty}\lambda\left\Vert u_{n}\right\Vert _{r}^{p}=\lambda\left\Vert
u_{\lambda}\right\Vert _{r}^{p}.
\end{align*}
Hence, $t_{\lambda}u_{\lambda}\in\mathcal{N}_{\lambda,r}$ where
\[
t_{\lambda}=\left(  \frac{\left\Vert \nabla u_{\lambda}\right\Vert _{q}^{q}%
}{\lambda\left\Vert u_{\lambda}\right\Vert _{r}^{p}-\left\Vert \nabla
u_{\lambda}\right\Vert _{p}^{p}}\right)  ^{\frac{1}{p-q}}\leq1.
\]

It follows that%
\begin{align*}
m_{\lambda}  &  \leq I_{\lambda,r}(t_{\lambda}u_{\lambda})\\
&  =(t_{\lambda})^{q}\left(  \frac{1}{q}-\frac{1}{p}\right)  \left\Vert \nabla
u_{\lambda}\right\Vert _{q}^{q}\\
&  \leq(t_{\lambda})^{q}\left(  \frac{1}{q}-\frac{1}{p}\right)  \liminf
_{n\rightarrow\infty}\left\Vert \nabla u_{n}\right\Vert _{q}^{q}=(t_{\lambda
})^{q}\liminf_{n\rightarrow\infty}I_{\lambda,r}(u_{n})=(t_{\lambda}%
)^{q}m_{\lambda}\leq m_{\lambda}.
\end{align*}
Consequently, $t_{\lambda}=1,$ $u_{\lambda}\in\mathcal{N}_{\lambda,r}$ and,
$I_{\lambda,r}(u_{\lambda})=m_{\lambda}.$

Since $\left\vert u_{\lambda}\right\vert \in\mathcal{N}_{\lambda,r}$ and
$I_{\lambda,r}(\left\vert u_{\lambda}\right\vert )=I_{\lambda,r}(u_{\lambda
})=m_{\lambda},$ we can assume that $u_{\lambda}$ is nonnegative.
\end{proof}

\section{The limit problem as $r\rightarrow\infty$\label{sec3}}

In this section we fix $p,q>N,$ $p\not =q,$ and study the following Dirichlet
problem%
\begin{equation}
\left\{
\begin{array}
[c]{lll}%
-\left(  \Delta_{p}+\Delta_{q}\right)  u=\lambda\left\vert u(x_{u})\right\vert
^{p-2}u(x_{u})\delta_{x_{u}} & \mathrm{in} & \Omega\\
u=0 & \mathrm{on} & \partial\Omega,
\end{array}
\right.  \label{pqinf}%
\end{equation}
where $x_{u}$ is a maximum point of $\left\vert u\right\vert $ (so that
$\left\vert u(x_{u})\right\vert =\left\Vert u\right\Vert _{\infty}$) and
$\delta_{x_{u}}$ is the delta Dirac distribution supported at $x_{u}.$

As we will see in the sequel \eqref{pqinf} is the limit problem of (\ref{pqu})
as $r\rightarrow\infty.$

\begin{definition}
We say that $u\in X_{p,q}$ is a weak solution of \eqref{pqinf} if $\left\vert
u(x_{u})\right\vert =\left\Vert u\right\Vert _{\infty}$ and
\begin{equation}%
{\displaystyle\int_{\Omega}}
\left(  \left\vert \nabla u\right\vert ^{p-2}+\left\vert \nabla u\right\vert
^{q-2}\right)  \nabla u\cdot\nabla v\mathrm{d}x=\lambda\left\vert
u(x_{u})\right\vert ^{p-2}u(x_{u})v(x_{u}),\quad\forall\,v\in X_{p,q}.
\label{weakinf}%
\end{equation}

\end{definition}

Let us recall the Morrey's inequality, valid if $m>N:$%
\[
C\left\Vert u\right\Vert _{0,\alpha_{m}}^{m}\leq\left\Vert \nabla u\right\Vert
_{m}^{p},\quad\forall\,u\in W_{0}^{1,m}(\Omega),
\]
where $\left\Vert u\right\Vert _{0,s}$ denotes the standard norm in the
H\"{o}lder space $C^{0,s}(\overline{\Omega}),$ $\alpha_{m}=1-\frac{N}{m}$ and
the positive constant $C$ depends only on $\Omega,m$ and $N.$

Morrey's inequality implies that the embedding $W_{0}^{1,m}(\Omega
)\hookrightarrow C(\overline{\Omega})$ is compact and this fact guarantees
that the infimum of the Rayleigh quotient $\left\Vert \nabla v\right\Vert
_{m}^{m}/\left\Vert v\right\Vert _{\infty}^{m}$ is attained in $W_{0}%
^{1,m}(\Omega)\setminus\left\{  0\right\}  .$ From now on, we make use of the
additional notation%
\[
\lambda_{\infty}(m):=\min\left\{  \frac{\left\Vert \nabla v\right\Vert
_{m}^{m}}{\left\Vert v\right\Vert _{\infty}^{m}}:v\in W_{0}^{1,m}%
(\Omega)\setminus\left\{  0\right\}  \right\}  ,\quad m>N.
\]
As it is shown in \cite{EP16},
\begin{equation}
\lim_{r\rightarrow\infty}\lambda_{r}(m)=\lambda_{\infty}(m). \label{Lm}%
\end{equation}

\begin{remark}
\label{nonex}A nontrivial weak solution for \eqref{pqinf} cannot exist if
$\lambda\leq\lambda_{\infty}(p).$ Indeed, taking $v=u$ in \eqref{weakinf} one
has%
\[
\left(  \lambda-\lambda_{\infty}(p)\right)  \left\Vert u\right\Vert _{\infty
}^{p}=\left\Vert \nabla u\right\Vert _{p}^{p}+\left\Vert \nabla u\right\Vert
_{q}^{q}-\lambda_{\infty}(p)\left\Vert u\right\Vert _{\infty}^{p}>\left\Vert
\nabla u\right\Vert _{p}^{p}-\lambda_{\infty}(p)\left\Vert u\right\Vert
_{\infty}^{p}\geq0.
\]

\end{remark}

So, we assume in the rest of this section that $\lambda>\lambda_{\infty}(p).$

We define the energy functional $J_{\lambda}:X_{p,q}\rightarrow\mathbb{R}$
associated with (\ref{pqinf}) by%
\[
J_{\lambda}(u):=\frac{1}{p}\left\Vert \nabla u\right\Vert _{p}^{p}+\frac{1}%
{q}\left\Vert \nabla u\right\Vert _{q}^{q}-\frac{\lambda}{p}\left\Vert
u\right\Vert _{\infty}^{p}%
\]
and the Nehari set associated with $J_{\lambda}$ by
\[
\mathcal{N}_{\lambda,\infty}:=\left\{  u\in X_{p,q}\setminus\left\{
0\right\}  :\left\Vert \nabla u\right\Vert _{p}^{p}+\left\Vert \nabla
u\right\Vert _{q}^{q}=\lambda\left\Vert u\right\Vert _{\infty}^{p}\right\}  .
\]

Note that
\[
u\in\mathcal{N}_{\lambda,\infty}\Longrightarrow J_{\lambda}(u)=\left(
\frac{1}{q}-\frac{1}{p}\right)  \left\Vert \nabla u\right\Vert _{q}^{q}.
\]

Moreover, the identity
\[
\left\Vert \nabla(tu)\right\Vert _{p}^{p}+\left\Vert \nabla(tu)\right\Vert
_{q}^{q}-\lambda\left\Vert tu\right\Vert _{\infty}^{p}=t^{q}\left[  \left\Vert
\nabla u\right\Vert _{q}^{q}-t^{p-q}\left(  \lambda\left\Vert u\right\Vert
_{\infty}^{p}-\left\Vert \nabla u\right\Vert _{p}^{p}\right)  \right]  ,\quad
v\in X_{p,q},\quad\,t>0,
\]
allows us to derive the following equivalence, valid for the case
$N<q<p<\infty:$
\begin{equation}
tu\in\mathcal{N}_{\lambda,\infty}\Longleftrightarrow\lambda\left\Vert
u\right\Vert _{\infty}^{p}>\left\Vert \nabla u\right\Vert _{p}^{p}%
\quad\mathrm{and}\quad t=\left(  \frac{\left\Vert \nabla u\right\Vert _{q}%
^{q}}{\lambda\left\Vert u\right\Vert _{\infty}^{p}-\left\Vert \nabla
u\right\Vert _{p}^{p}}\right)  ^{\frac{1}{p-q}}. \label{vNehin}%
\end{equation}

Hence, by taking a function $e\in X_{p,q}\setminus\left\{  0\right\}  $ such
that $\left\Vert \nabla e\right\Vert _{p}^{p}=\lambda_{\infty}(p)\left\Vert
e\right\Vert _{\infty}^{p}$ we can see that $\mathcal{N}_{\lambda,\infty
}\not =\varnothing$ when $N<q<p<\infty.$

\begin{remark}
\label{infpos}In the case $N<q<p<\infty$ we also have%
\[
\mu_{\lambda}:=\inf_{u\in\mathcal{N}_{\lambda,\infty}}J_{\lambda}%
(u)\geq\left(  \frac{1}{q}-\frac{1}{p}\right)  \left(  \lambda^{-1}%
(\lambda_{\infty}(q))^{p/q}\right)  ^{\frac{q}{p-q}}>0.
\]
Indeed, this follows from the estimates
\[
\left\Vert \nabla u\right\Vert _{q}^{q}\leq\left\Vert \nabla u\right\Vert
_{p}^{p}+\left\Vert \nabla u\right\Vert _{q}^{q}=\lambda\left\Vert
u\right\Vert _{\infty}^{p}\leq\lambda(\lambda_{\infty}(q))^{-p/q}\left\Vert
\nabla u\right\Vert _{q}^{p},
\]
valid for any $u\in\mathcal{N}_{\lambda,\infty}.$
\end{remark}

\begin{definition}
We say that $u\in X_{p,q}$ is a least energy solution of \eqref{pqinf} if $u$
minimizes the functional $J_{\lambda}$ either on $X_{p,q}$ in the case
$N<p<q<\infty$ or on $\mathcal{N}_{\lambda,\infty}$ in the case $N<q<p<\infty
.$
\end{definition}

The functional $J_{\lambda}$ is not differentiable because of the term
involving the $L^{\infty}$ norm. Even though we are able to show that least
energy solutions are weak solutions. Indeed, this fact is a consequence of the
following identity (see \cite[Chapter 11]{Ban} and \cite{HL16}) valid for all
$u\in C(\overline{\Omega})$ and that provides the right Gateaux derivative for
the functional $u\mapsto\left\Vert u\right\Vert _{\infty}^{p}:$
\begin{equation}
\lim_{\epsilon\rightarrow0^{+}}\frac{\left\Vert u+\epsilon v\right\Vert
_{\infty}^{p}-\left\Vert u\right\Vert _{\infty}^{p}}{\epsilon}=p\max\left\{
\left\vert u(x)\right\vert ^{p-2}u(x)v(x):x\in\Gamma_{u}\right\}
,\quad\forall\,v\in C(\overline{\Omega}), \label{Ban}%
\end{equation}
where%
\[
\Gamma_{u}:=\left\{  x\in\Omega:\left\vert u(x)\right\vert =\left\Vert
u\right\Vert _{\infty}\right\}  .
\]

\begin{proposition}
\label{wsol}Least energy solutions of \eqref{pqinf} are weak solutions of this problem.
\end{proposition}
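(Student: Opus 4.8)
The plan is to exploit the right Gateaux derivative formula (\ref{Ban}) together with a one-sided first-order optimality condition, treating the two cases $N<p<q$ and $N<q<p$ in parallel. Throughout, write $\Phi(u):=\frac{1}{p}\Vert\nabla u\Vert_{p}^{p}+\frac{1}{q}\Vert\nabla u\Vert_{q}^{q}$, so that $J_{\lambda}=\Phi-\frac{\lambda}{p}\Vert\cdot\Vert_{\infty}^{p}$ with $\Phi\in C^{1}(X_{p,q})$ and $\langle\Phi'(u),v\rangle=\int_{\Omega}(\vert\nabla u\vert^{p-2}+\vert\nabla u\vert^{q-2})\nabla u\cdot\nabla v\,\mathrm{d}x$. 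Since $\max\{p,q\}>N$, Morrey's inequality gives $u\in C(\overline{\Omega})$, so $\Gamma_{u}$ is a nonempty compact subset of $\Omega$ (the maximum of $\vert u\vert$ cannot be attained on $\partial\Omega$, where $u\equiv0$ while $u\not\equiv0$), and (\ref{Ban}) applies to $u$.

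First I would derive, in both cases, the one-sided inequality
\[
\langle\Phi'(u),v\rangle\ge\lambda\max\{\vert u(x)\vert^{p-2}u(x)v(x):x\in\Gamma_{u}\},\qquad\forall\,v\in X_{p,q}.\quad(\star)
\]
In the case $N<p<q$ this is immediate: $\epsilon\mapsto J_{\lambda}(u+\epsilon v)$ has a minimum at $\epsilon=0$, so its right derivative there is nonnegative, and since $\Phi$ is $C^{1}$ and the $\Vert\cdot\Vert_{\infty}^{p}$ term is handled by (\ref{Ban}), this right derivative is exactly the left-hand side of $(\star)$ minus its right-hand side. In the case $N<q<p$ the minimization is constrained to $\mathcal{N}_{\lambda,\infty}$, so I would first project $u+\epsilon v$ onto the Nehari set. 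Because $u\in\mathcal{N}_{\lambda,\infty}$ forces $\lambda\Vert u\Vert_{\infty}^{p}>\Vert\nabla u\Vert_{p}^{p}$, the same strict inequality survives for $u+\epsilon v$ when $\vert\epsilon\vert$ is small, so (\ref{vNehin}) provides $\tau(\epsilon)>0$ with $\tau(\epsilon)(u+\epsilon v)\in\mathcal{N}_{\lambda,\infty}$ and $\tau(0)=1$. Here $\tau$ is right-differentiable at $0$, being a smooth function of the $C^{1}$ quantities $\Vert\nabla(u+\epsilon v)\Vert_{p}^{p},\Vert\nabla(u+\epsilon v)\Vert_{q}^{q}$ and of $\Vert u+\epsilon v\Vert_{\infty}^{p}$, the last of which is right-differentiable by (\ref{Ban}). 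Then $\gamma(\epsilon):=J_{\lambda}(\tau(\epsilon)(u+\epsilon v))$ has a minimum at $\epsilon=0$, and a product/chain rule for one-sided derivatives (valid since all factors are continuous and right-differentiable) gives $\gamma_{+}'(0)\ge0$, where
\[
\gamma_{+}'(0)=\tau_{+}'(0)\bigl(\langle\Phi'(u),u\rangle-\lambda\Vert u\Vert_{\infty}^{p}\bigr)+\langle\Phi'(u),v\rangle-\lambda\max\{\vert u(x)\vert^{p-2}u(x)v(x):x\in\Gamma_{u}\}.
\]
The crucial point is that the coefficient of the only one-sidedly defined quantity $\tau_{+}'(0)$ vanishes, precisely because $u\in\mathcal{N}_{\lambda,\infty}$ gives $\langle\Phi'(u),u\rangle=\Vert\nabla u\Vert_{p}^{p}+\Vert\nabla u\Vert_{q}^{q}=\lambda\Vert u\Vert_{\infty}^{p}$; what remains is again $(\star)$.

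Finally I would close the argument by applying $(\star)$ to $-v$, which yields $\langle\Phi'(u),v\rangle\le\lambda\min\{\vert u(x)\vert^{p-2}u(x)v(x):x\in\Gamma_{u}\}$; combining this with $(\star)$ and the trivial bound $\min\le\max$ forces
\[
\lambda\max\{\vert u(x)\vert^{p-2}u(x)v(x):x\in\Gamma_{u}\}=\langle\Phi'(u),v\rangle=\lambda\min\{\vert u(x)\vert^{p-2}u(x)v(x):x\in\Gamma_{u}\}
\]
for every $v\in X_{p,q}$, i.e. $x\mapsto\vert u(x)\vert^{p-2}u(x)v(x)$ is constant on $\Gamma_{u}$ for all $v$. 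Since $\vert u\vert\equiv\Vert u\Vert_{\infty}$ on $\Gamma_{u}$, if $\Gamma_{u}$ contained two distinct points one could choose $v\in C_{c}^{\infty}(\Omega)\subset X_{p,q}$ with suitably prescribed values at those points (covering both equal and opposite signs of $u$ there) to contradict this; hence $\Gamma_{u}=\{x_{u}\}$ is a single point, which incidentally establishes uniqueness of the maximum point. The sandwiched identity then reads $\langle\Phi'(u),v\rangle=\lambda\vert u(x_{u})\vert^{p-2}u(x_{u})v(x_{u})$ for all $v\in X_{p,q}$, which is exactly (\ref{weakinf}), while $\vert u(x_{u})\vert=\Vert u\Vert_{\infty}$ holds by definition of $\Gamma_{u}$. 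I expect the main obstacle to be the constrained case $N<q<p$: justifying the one-sided chain and product rules for $\gamma$ and verifying that the $\tau_{+}'(0)$-term genuinely cancels through the Nehari identity. The remainder is a short separation argument.
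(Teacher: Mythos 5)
Your proposal is correct and takes essentially the same route as the paper's proof: the one-sided optimality condition via the right Gateaux derivative (\ref{Ban}) in the unconstrained case $N<p<q$, the Nehari-projection map $\tau$ from (\ref{vNehin}) with the $\tau'(0_{+})$-term cancelling exactly through the constraint $\left\Vert \nabla u\right\Vert _{p}^{p}+\left\Vert \nabla u\right\Vert _{q}^{q}=\lambda\left\Vert u\right\Vert _{\infty}^{p}$ in the case $N<q<p$, and the substitution $v\mapsto -v$ forcing $\min=\max$ on $\Gamma_{u}$, hence $\Gamma_{u}=\{x_{u}\}$ and (\ref{weakinf}). The only differences are cosmetic: you run the two cases in parallel with the unified notation $\Phi$ and spell out the two-point separation argument for the singleton conclusion slightly more explicitly than the paper does.
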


\begin{proof}
First we consider the case $N<p<q<\infty.$ We have, for each $v\in
X_{p,q}\hookrightarrow C(\overline{\Omega}),$%
\begin{equation}
\lim_{\epsilon\rightarrow0^{+}}\frac{J_{\lambda}(u+\epsilon v)-J_{\lambda}%
(u)}{\epsilon}=\lim_{\epsilon\rightarrow0^{+}}\left(  A(\epsilon
)-B(\epsilon)\right)  \label{limJ}%
\end{equation}
where
\[
A(\epsilon):=\frac{1}{p}\frac{\left\Vert \nabla(u+\epsilon v)\right\Vert
_{p}^{p}-\left\Vert \nabla u\right\Vert _{p}^{p}}{\epsilon}+\frac{1}{q}%
\frac{\left\Vert \nabla(u+\epsilon v)\right\Vert _{q}^{q}-\left\Vert \nabla
u\right\Vert _{q}^{q}}{\epsilon}%
\]
and%
\[
B(\epsilon):=\frac{\lambda}{p}\frac{\left\Vert u+\epsilon v\right\Vert
_{\infty}^{p}-\left\Vert u\right\Vert _{\infty}^{p}}{\epsilon}.
\]

Taking into account that the first limit in (\ref{limJ}) is nonnegative
(because $u$ minimizes $J_{\lambda}$) and still considering that
\[
\lim_{\epsilon\rightarrow0^{+}}A(\epsilon)=%
{\displaystyle\int_{\Omega}}
\left(  \left\vert \nabla u\right\vert ^{p-2}+\left\vert \nabla u\right\vert
^{q-2}\right)  \nabla u\cdot\nabla v\mathrm{d}x
\]
and that, according to (\ref{Ban}),
\[
\lim_{\epsilon\rightarrow0^{+}}B(\epsilon)=\lambda\max\left\{  \left\vert
u(x)\right\vert ^{p-2}u(x)v(x):x\in\Gamma_{u}\right\}  ,
\]
we conclude that%
\[
\lambda\max\left\{  \left\vert u(x)\right\vert ^{p-2}u(x)v(x):x\in\Gamma
_{u}\right\}  \leq%
{\displaystyle\int_{\Omega}}
\left(  \left\vert \nabla u\right\vert ^{p-2}\nabla u+\left\vert \nabla
u\right\vert ^{q-2}\nabla u\right)  \cdot\nabla v\mathrm{d}x.
\]

The arbitrariness of $v\in X_{p,q}$ allows us to replace $v$ with $-v$ in the
above inequality and also get%
\[
\lambda\min\left\{  \left\vert u(x)\right\vert ^{p-2}u(x)v(x):x\in\Gamma
_{u}\right\}  \geq%
{\displaystyle\int_{\Omega}}
\left(  \left\vert \nabla u\right\vert ^{p-2}\nabla u+\left\vert \nabla
u\right\vert ^{q-2}\nabla u\right)  \cdot\nabla v\mathrm{d}x.
\]
These last two inequalities lead us to the following identity
\[
\left\vert u(x)\right\vert ^{p-2}u(x)v(x)=%
{\displaystyle\int_{\Omega}}
\left(  \left\vert \nabla u\right\vert ^{p-2}+\left\vert \nabla u\right\vert
^{q-2}\right)  \nabla u\cdot\nabla v\mathrm{d}x,\quad\forall\,v\in
X_{p,q}\quad\mathrm{and}\quad\forall\,x\in\Gamma_{u},
\]
which implies that $\Gamma_{u}$ is a singleton, say
\[
\Gamma_{u}=\left\{  x_{u}\right\}
\]
for some $x_{u}\in\Omega.$ Consequently,
\[%
{\displaystyle\int_{\Omega}}
\left(  \left\vert \nabla u\right\vert ^{p-2}+\left\vert \nabla u\right\vert
^{q-2}\right)  \nabla u\cdot\nabla v\mathrm{d}x=\left\vert u(x_{u})\right\vert
^{p-2}u(x_{u})v(x_{u}),\quad\forall\,v\in X_{p,q},
\]
which is (\ref{weakinf}) for $u.$

We now analyze the case $N<q<p<\infty.$ Let us take an arbitrary function
$v\in X_{p,q}.$

Since $u\in\mathcal{N}_{\lambda,\infty}$ we have $\left\Vert \nabla
u\right\Vert _{p}^{p}<\lambda\left\Vert u\right\Vert _{\infty}^{p}.$ Hence, we
can take $\delta>0$ such that $u+sv\not \equiv 0$ and%
\[
\left\Vert \nabla(u+sv)\right\Vert _{p}^{p}<\lambda\left\Vert u+sv\right\Vert
_{\infty}^{p},\quad\forall s\in(-\delta,\delta).
\]

Let $\tau:(-\delta,\delta)\rightarrow(0,\infty)$ be the function given by%
\[
\tau(s)=\left(  \frac{\left\Vert \nabla(u+sv)\right\Vert _{q}^{q}}%
{\lambda\left\Vert u+sv\right\Vert _{\infty}^{p}-\left\Vert \nabla
(u+sv)\right\Vert _{p}^{p}}\right)  ^{\frac{1}{p-q}},
\]
which is right differentiable at $s=0.$

We can see from (\ref{vNehin}) that $\tau(s)(u+sv)\in\mathcal{N}%
_{\lambda,\infty}$ for all $s\in(-\delta,\delta)$ and that $\tau(0)=1$.

Now, let us consider the function $\gamma:(-\delta,\delta)\rightarrow
\mathbb{R}$ defined by
\[
\gamma(s)=J_{\lambda}\left(  \tau(s)(u+sv)\right)  =\frac{\tau(s)^{p}}%
{p}\left\Vert \nabla(u+sv)\right\Vert _{p}^{p}+\frac{\tau(s)^{q}}{q}\left\Vert
\nabla\left(  (u+sv)\right)  \right\Vert _{q}^{q}-\frac{\lambda\tau(s)^{p}}%
{p}\left\Vert u+sv\right\Vert _{\infty}^{p}.
\]
According to (\ref{Ban}) this function is right differentiable at $s=0$ and
\begin{align*}
\gamma^{\prime}(0_{+})  &  =\tau^{\prime}(0_{+})\left(  \left\Vert \nabla
u\right\Vert _{p}^{p}+\left\Vert \nabla u\right\Vert _{q}^{q}\right)  +%
{\displaystyle\int_{\Omega}}
\left(  \left\vert \nabla u\right\vert ^{p-2}+\left\vert \nabla u\right\vert
^{q-2}\right)  \nabla u\cdot\nabla v\mathrm{d}x\\
&  -\lambda\max\left\{  \left\vert u(x)\right\vert ^{p-2}u(x)v(x):x\in
\Gamma_{u}\right\}  -\tau^{\prime}(0_{+})\lambda\left\Vert u\right\Vert
_{\infty}^{p}\\
&  =%
{\displaystyle\int_{\Omega}}
\left(  \left\vert \nabla u\right\vert ^{p-2}+\left\vert \nabla u\right\vert
^{q-2}\right)  \nabla u\cdot\nabla v\mathrm{d}x-\lambda\max\left\{  \left\vert
u(x)\right\vert ^{p-2}u(x)v(x):x\in\Gamma_{u}\right\}  ,
\end{align*}
where we have used that $\tau(0)=1$ and $\left\Vert \nabla u\right\Vert
_{p}^{p}+\left\Vert \nabla u\right\Vert _{q}^{q}-\lambda\left\Vert
u\right\Vert _{\infty}^{p}=0.$

Since $\gamma$ attains its minimum value at $s=0$ we have
\[
\gamma^{\prime}(0_{+})=\lim_{s\rightarrow0^{+}}\frac{\gamma(s)-\gamma(0)}%
{s}\geq0.
\]
Hence,%
\[
\lambda\max\left\{  \left\vert u(x)\right\vert ^{p-2}u(x)v(x):x\in\Gamma
_{u}\right\}  \leq%
{\displaystyle\int_{\Omega}}
\left(  \left\vert \nabla u\right\vert ^{p-2}+\left\vert \nabla u\right\vert
^{q-2}\right)  \nabla u\cdot\nabla v\mathrm{d}x.
\]

Taking into account the arbitrariness of $v$ we replace $v$ with $-v$ to get%
\begin{align*}
\lambda\min\left\{  \left\vert u(x)\right\vert ^{p-2}u(x)v(x):x\in\Gamma
_{u}\right\}   &  \geq%
{\displaystyle\int_{\Omega}}
\left(  \left\vert \nabla u\right\vert ^{p-2}+\left\vert \nabla u\right\vert
^{q-2}\right)  \nabla u\cdot\nabla v\mathrm{d}x\\
&  \geq\lambda\max\left\{  \left\vert u(x)\right\vert ^{p-2}u(x)v(x):x\in
\Gamma_{u}\right\}  ,
\end{align*}
so that%
\[
\min\left\{  \left\vert u(x)\right\vert ^{p-2}u(x)v(x):x\in\Gamma_{u}\right\}
=\max\left\{  \left\vert u(x)\right\vert ^{p-2}u(x)v(x):x\in\Gamma
_{u}\right\}  ,
\]
implying both that $\Gamma_{u}=\left\{  x_{u}\right\}  ,$ for some $x_{u}%
\in\Omega,$ and that
\[%
{\displaystyle\int_{\Omega}}
\left(  \left\vert \nabla u\right\vert ^{p-2}+\left\vert \nabla u\right\vert
^{q-2}\right)  \nabla u\cdot\nabla v\mathrm{d}x=\lambda\left\vert
u(x_{u})\right\vert ^{p-2}u(x_{u})v(x_{u}).
\]

\end{proof}

Now we are ready to show that in both cases $N<p<q<\infty$ and $N<q<p<\infty$
a nonnegative least energy solution of (\ref{pqinf}) can be obtained from the
least energy solutions of (\ref{pqu}) by a limit process, by making as
$r\rightarrow\infty.$ For this we observe from (\ref{Lm}), with $m=p,$ that if
$\lambda>\lambda_{\infty}(p)$ and $r_{n}\rightarrow\infty,$ then there exists
$n_{0}\in\mathbb{N}$ such that $\lambda_{r_{n}}(p)<\lambda$ for all $n\geq
n_{0}.$ Therefore, for each $n\geq n_{0}$ the boundary value problem
\begin{equation}
\left\{
\begin{array}
[c]{lll}%
-\left(  \Delta_{p}+\Delta_{q}\right)  u=\lambda\left\Vert u\right\Vert
_{r_{n}}^{p-r_{n}}\left\vert u\right\vert ^{r_{n}-2}u & \mathrm{in} & \Omega\\
u=0 & \mathrm{on} & \partial\Omega
\end{array}
\right.  \label{rnpq}%
\end{equation}
has at least one nonnegative least energy solution $u_{n}.$ Having this in
mind, we can assume that $n_{0}=1$ in the next proposition.

\begin{proposition}
\label{existpqinf}Let $\lambda>\lambda_{\infty}(p)$ and $r_{n}\rightarrow
\infty.$ Denote by $u_{n}$ a nonnegative least energy solution of
\eqref{rnpq}. There exists a subsequence of $\left\{  u_{n}\right\}  $
converging strongly in $X_{p,q}$ to a nonnegative least energy solution $u$ of \eqref{pqinf}.
\end{proposition}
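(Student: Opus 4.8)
The plan is to handle the two regimes $N<p<q$ and $N<q<p$ in parallel, exploiting in both the compact embedding $X_{p,q}\hookrightarrow C(\overline{\Omega})$ (Morrey) together with a comparison between the energies $I_{\lambda,r_n}$ and $J_\lambda$. The first step is a bound on $\{u_n\}$ in $X_{p,q}$ that is uniform in $n$. For an $n$-independent upper bound on the minimal energies I would fix one test function $e\in X_{p,q}$ with $\|\nabla e\|_p^p<\lambda\|e\|_\infty^p$ (available because $\lambda>\lambda_\infty(p)$): in the case $N<p<q$ directly evaluating $I_{\lambda,r_n}(te)$, and in the case $N<q<p$ rescaling $e$ into $\mathcal N_{\lambda,r_n}$ through \eqref{vNeh}, and using $\|e\|_{r_n}\to\|e\|_\infty$, gives $I_{\lambda,r_n}(u_n)\le C$. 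The matching control from below comes from $\|u\|_{r_n}^q\le(\lambda_{r_n}(q))^{-1}\|\nabla u\|_q^q$ with $\lambda_{r_n}(q)\to\lambda_\infty(q)>0$ by \eqref{Lm}: since here $q$ is the dominant exponent in the coercive estimate, the top-order $q$-gradient term dominates the perturbation uniformly in $n$, bounding $\|\nabla u_n\|_q$. In the Nehari regime the identity $\|\nabla u_n\|_p^p+\|\nabla u_n\|_q^q=\lambda\|u_n\|_{r_n}^p$ then upgrades this to a bound in $W_0^{1,p}=X_{p,q}$.

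With these bounds I would pass to a subsequence so that $u_n\rightharpoonup u$ weakly in $X_{p,q}$ and, by compactness, $u_n\to u$ uniformly on $\overline{\Omega}$; in particular $u\ge 0$ and $\|u_n\|_{r_n}\to\|u\|_\infty$, the latter being the elementary sandwich $\|u\|_{r_n}-\|u_n-u\|_{r_n}\le\|u_n\|_{r_n}\le|\Omega|^{1/r_n}\|u_n\|_\infty$ as $r_n\to\infty$. The core of the argument is an energy two-sided estimate. Weak lower semicontinuity of the gradient norms together with $\|u_n\|_{r_n}^p\to\|u\|_\infty^p$ gives $\liminf_n I_{\lambda,r_n}(u_n)\ge J_\lambda(u)$. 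For the reverse bound, set $\mu_\lambda$ to be the infimum of $J_\lambda$ over the relevant admissible class ($X_{p,q}$ if $N<p<q$, $\mathcal N_{\lambda,\infty}$ if $N<q<p$); for each $\varepsilon>0$ pick $w$ almost realizing $\mu_\lambda$, observe $I_{\lambda,r_n}(w)\to J_\lambda(w)$ (resp. the rescaled $s_nw\in\mathcal N_{\lambda,r_n}$ from \eqref{vNeh} has $s_n\to1$, since $w\in\mathcal N_{\lambda,\infty}$, and $I_{\lambda,r_n}(s_nw)\to J_\lambda(w)$), and use minimality of $u_n$ to get $\limsup_n I_{\lambda,r_n}(u_n)\le\mu_\lambda$.

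Next I would identify $u$ as a least energy solution. In the case $N<p<q$ this is immediate: $u\in X_{p,q}$, so $\mu_\lambda\le J_\lambda(u)\le\liminf_n I_{\lambda,r_n}(u_n)\le\limsup_n I_{\lambda,r_n}(u_n)\le\mu_\lambda$ are all equalities, and $u\not\equiv 0$ because $\mu_\lambda<0=J_\lambda(0)$. In the Nehari case $N<q<p$ I first get $u\not\equiv0$ from the uniform lower bound $\|u_n\|_{r_n}\ge(\lambda_{r_n}(q)/\lambda)^{1/(p-q)}$ of \eqref{lowr}, then pass the Nehari identity to the limit to obtain $\|\nabla u\|_p^p+\|\nabla u\|_q^q\le\lambda\|u\|_\infty^p$; since $\|\nabla u\|_q^q>0$ this yields $\|\nabla u\|_p^p<\lambda\|u\|_\infty^p$, so by \eqref{vNehin} some $t_u\le 1$ has $t_uu\in\mathcal N_{\lambda,\infty}$. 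Then
\[
\mu_\lambda\ge\limsup_n I_{\lambda,r_n}(u_n)\ge\liminf_n I_{\lambda,r_n}(u_n)\ge\Big(\tfrac1q-\tfrac1p\Big)\|\nabla u\|_q^q\ge J_\lambda(t_uu)\ge\mu_\lambda
\]
forces $t_u=1$, hence $u\in\mathcal N_{\lambda,\infty}$ and $J_\lambda(u)=\mu_\lambda$. In both regimes $u$ is thus a nonnegative least energy solution (and, by Proposition \ref{wsol}, a weak solution) of \eqref{pqinf}.

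Finally, the energy equalities upgrade weak to strong convergence. From $I_{\lambda,r_n}(u_n)\to J_\lambda(u)$ and $\|u_n\|_{r_n}^p\to\|u\|_\infty^p$ I would separate the two lower-semicontinuous gradient terms, whose weighted sum converges to the sum of the limits, to conclude $\|\nabla u_n\|_p^p\to\|\nabla u\|_p^p$ and $\|\nabla u_n\|_q^q\to\|\nabla u\|_q^q$ (in the Nehari case the $p$-term is recovered directly from the limiting identity $\|\nabla u_n\|_p^p=\lambda\|u_n\|_{r_n}^p-\|\nabla u_n\|_q^q$). Norm convergence plus weak convergence in the uniformly convex space $W_0^{1,\max\{p,q\}}(\Omega)$ then gives $u_n\to u$ strongly in $X_{p,q}$. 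The step I expect to be the main obstacle is the bookkeeping in the regime $N<q<p$: keeping the scaling factors $s_n,t_u$ under control, ensuring the weak limit does not fall off the Nehari set, and extracting convergence of the \emph{non-dominant} $p$-gradient rather than merely the $q$-gradient.
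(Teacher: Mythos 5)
Your proposal is correct, and it reproduces the paper's overall skeleton (a priori bounds via comparison functions and Rayleigh-quotient estimates, Morrey compactness giving uniform convergence, nontriviality from \eqref{lowr}, and the rescaling $t_{u}\leq1$ forced back to $1$), but it diverges in two places worth noting. First, for the uniform bounds you compare against a single fixed $e$ with $\left\Vert \nabla e\right\Vert _{p}^{p}<\lambda\left\Vert e\right\Vert _{\infty}^{p}$, exploiting $\left\Vert e\right\Vert _{r_{n}}\rightarrow\left\Vert e\right\Vert _{\infty}$, whereas the paper uses the $n$-dependent extremals $e_{n}$ of $\lambda_{r_{n}}(p)$; and in the case $N<p<q$ you bound $\left\Vert \nabla u_{n}\right\Vert _{q}$ through an energy upper bound plus coercivity uniform in $n$ (which needs $\lambda_{r_{n}}(q)\rightarrow\lambda_{\infty}(q)>0$ from \eqref{Lm}), while the paper reads the bound directly off the identity $\left\Vert \nabla u_{n}\right\Vert _{p}^{p}+\left\Vert \nabla u_{n}\right\Vert _{q}^{q}=\lambda\left\Vert u_{n}\right\Vert _{r_{n}}^{p}$; both are sound. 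Second --- the genuine difference --- is the strong-convergence step: the paper tests the Euler--Lagrange equation \eqref{rnpq1} with $v=u_{n}-u$ and invokes the vector monotonicity inequality \eqref{desvec} (legitimate since $p,q>N\geq2$), while you never use the equation at all: from convergence of the energies, $\left\Vert u_{n}\right\Vert _{r_{n}}^{p}\rightarrow\left\Vert u\right\Vert _{\infty}^{p}$, and weak lower semicontinuity of each gradient term you split the sum to get $\left\Vert \nabla u_{n}\right\Vert _{p}\rightarrow\left\Vert \nabla u\right\Vert _{p}$ and $\left\Vert \nabla u_{n}\right\Vert _{q}\rightarrow\left\Vert \nabla u\right\Vert _{q}$ (in the Nehari case recovering the $p$-term from the limiting identity), and then conclude by the Radon--Riesz property of the uniformly convex space $W_{0}^{1,\max\{p,q\}}(\Omega)$. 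Your route is arguably more elementary and would extend to exponents below $2$, where \eqref{desvec} fails as stated; the paper's route yields strong convergence of both gradients from pointwise monotonicity, but for the claimed conclusion --- convergence in $X_{p,q}$ --- the two are equally sufficient. Your identification step (the two-sided sandwich with $\mu_{\lambda}$, near-minimizers $w$, rescalings $s_{n}\rightarrow1$) is in substance the paper's argument reorganized through $\limsup/\liminf$ inequalities, and your explicit remark that $\mu_{\lambda}<0$ in the case $N<p<q$ (hence $u\not\equiv0$) is a small point the paper leaves implicit.
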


\begin{proof}
First we consider $N<p<q<\infty,$ so that $X_{p,q}=W_{0}^{1,q}(\Omega).$
Since
\begin{align*}
\left\Vert \nabla u_{n}\right\Vert _{q}^{q}  &  \leq\left\Vert \nabla
u_{n}\right\Vert _{q}^{q}+\left\Vert \nabla u_{n}\right\Vert _{p}^{p}\\
&  =\lambda\left\Vert u_{n}\right\Vert _{r_{n}}^{p}\leq\frac{\lambda}%
{\lambda_{r_{n}}(p)}\left\Vert \nabla u_{n}\right\Vert _{p}^{p}\leq
\frac{\lambda}{\lambda_{r_{n}}(p)}\left\Vert \nabla u_{n}\right\Vert _{q}%
^{p}\left\vert \Omega\right\vert ^{\frac{q-p}{q}},
\end{align*}
we have%
\[
\left\Vert \nabla u_{n}\right\Vert _{q}\leq\left\vert \Omega\right\vert
^{\frac{1}{q}}\left(  \frac{\lambda}{\lambda_{r_{n}}(p)}\right)  ^{\frac
{1}{q-p}},
\]
implying thus that $\left\{  u_{n}\right\}  $ is bounded in $X_{p,q}.$
Therefore, up to relabeling the sequence $\left\{  r_{n}\right\}  ,$ we can
assume that there exists a nonnegative function $u\in X_{p,q}$ such that
$u_{n}\rightharpoonup u$ in $X_{p,q}$ and $u_{n}\rightarrow u$ uniformly in
$\overline{\Omega}.$

In order to prove that $u$ minimizes $J_{\lambda}$ globally we fix an
arbitrary function $v\in X_{p,q}\hookrightarrow C(\overline{\Omega}).$ We know
that
\[
\frac{1}{p}\left\Vert \nabla u_{n}\right\Vert _{p}^{p}+\frac{1}{q}\left\Vert
\nabla u_{n}\right\Vert _{q}^{q}-\frac{\lambda}{p}\left\Vert u_{n}\right\Vert
_{r_{n}}^{p}\leq\frac{1}{p}\left\Vert \nabla v\right\Vert _{p}^{p}+\frac{1}%
{q}\left\Vert \nabla v\right\Vert _{q}^{q}-\frac{\lambda}{p}\left\Vert
v\right\Vert _{r_{n}}^{p},
\]
so that%
\begin{align*}
J_{\lambda}(u_{n})  &  =\frac{1}{p}\left\Vert \nabla u_{n}\right\Vert _{p}%
^{p}+\frac{1}{q}\left\Vert \nabla u_{n}\right\Vert _{q}^{q}-\frac{\lambda}%
{p}\left\Vert u_{n}\right\Vert _{\infty}^{p}\\
&  \leq\frac{1}{p}\left\Vert \nabla v\right\Vert _{p}^{p}+\frac{1}%
{q}\left\Vert \nabla v\right\Vert _{q}^{q}-\frac{\lambda}{p}\left\Vert
v\right\Vert _{r_{n}}^{p}+\frac{\lambda}{p}\left\Vert u_{n}\right\Vert
_{r_{n}}^{p}-\frac{\lambda}{p}\left\Vert u_{n}\right\Vert _{\infty}^{p}\\
&  \leq\frac{1}{p}\left\Vert \nabla v\right\Vert _{p}^{p}+\frac{1}%
{q}\left\Vert \nabla v\right\Vert _{q}^{q}-\frac{\lambda}{p}\left\Vert
v\right\Vert _{r_{n}}^{p}+\frac{\lambda\left\Vert u_{n}\right\Vert _{\infty
}^{p}}{p}\left(  \left\vert \Omega\right\vert ^{\frac{p}{r_{n}}}-1\right) \\
&  =J_{\lambda}(v)+\frac{\lambda}{p}\left\Vert v\right\Vert _{\infty}%
^{p}-\frac{\lambda}{p}\left\Vert v\right\Vert _{r_{n}}^{p}+\frac
{\lambda\left\Vert u_{n}\right\Vert _{\infty}^{p}}{p}\left(  \left\vert
\Omega\right\vert ^{\frac{p}{r_{n}}}-1\right)  .
\end{align*}

Since $v\in C(\overline{\Omega})$ we have $\left\Vert v\right\Vert _{r_{n}%
}^{p}\rightarrow\left\Vert v\right\Vert _{\infty}^{p}.$ This fact and the
convergences $u_{n}\rightharpoonup u$ and $u_{n}\rightarrow u$ in
$C(\overline{\Omega})$) imply that
\begin{align*}
J_{\lambda}(u)  &  =\liminf_{n\rightarrow\infty}J_{\lambda}(u_{n})\\
&  \leq J_{\lambda}(v)+\frac{\lambda}{p}\lim_{n\rightarrow\infty}\left(
\left\Vert v\right\Vert _{\infty}^{p}-\left\Vert v\right\Vert _{r_{n}}%
^{p}\right)  +\lim_{n\rightarrow\infty}\frac{\lambda\left\Vert u_{n}%
\right\Vert _{\infty}^{p}}{p}\left(  \left\vert \Omega\right\vert ^{\frac
{p}{r_{n}}}-1\right)  =J_{\lambda}(v).
\end{align*}
That is, $u$ minimizes $J_{\lambda}$ globally.

Now, let us consider the case $N<q<p<\infty,$ so that $X_{p,q}=W_{0}%
^{1,p}(\Omega)$ and
\begin{equation}
\left(  \frac{1}{q}-\frac{1}{p}\right)  \left\Vert \nabla u_{n}\right\Vert
_{q}^{q}=I_{\lambda,r_{n}}(u_{n})\leq I_{\lambda,r_{n}}(v)=\left(  \frac{1}%
{q}-\frac{1}{p}\right)  \left\Vert \nabla v\right\Vert _{q}^{q},\quad
\forall\,v\in\mathcal{N}_{\lambda,r_{n}}. \label{rnqp2}%
\end{equation}

In order to show that $\left\{  u_{n}\right\}  $ is bounded in $X_{p,q}$ we
pick $e_{n}\in X_{p,q}\setminus\left\{  0\right\}  $ satisfying (\ref{er})
with $r=r_{n},$ that is, such that%
\[
\left\Vert e_{n}\right\Vert _{r_{n}}=1\quad\mathrm{and}\quad\left\Vert \nabla
e_{n}\right\Vert _{p}^{p}=\lambda_{r_{n}}(p).
\]
Since $\lambda_{r_{n}}(p)<\lambda,$ we have $\left\Vert \nabla e_{n}%
\right\Vert _{p}^{p}<\lambda\left\Vert e_{n}\right\Vert _{r_{n}}^{p}$ and
$t_{n}e_{n}\in\mathcal{N}_{\lambda,r_{n}},$ where%
\[
t_{n}=\left(  \frac{\left\Vert \nabla e_{n}\right\Vert _{q}^{q}}%
{\lambda\left\Vert e_{n}\right\Vert _{r_{n}}^{p}-\left\Vert \nabla
e_{n}\right\Vert _{p}^{p}}\right)  ^{1/(p-q)}=\frac{\left\Vert \nabla
e_{n}\right\Vert _{q}^{q/(p-q)}}{(\lambda-\lambda_{r_{n}}(p))^{1/(p-q)}}.
\]

Hence, applying (\ref{rnqp2}), exploring the expression of $t_{n}$ and using
the H\"{o}lder inequality we obtain
\begin{align*}
\left\Vert \nabla u_{n}\right\Vert _{q}^{q}  &  \leq\left\Vert \nabla
(t_{n}e_{n})\right\Vert _{q}^{q}\\
&  =\frac{\left\Vert \nabla e_{n}\right\Vert _{q}^{q^{2}/(p-q)}\left\Vert
\nabla e_{n}\right\Vert _{q}^{q}}{(\lambda-\lambda_{r_{n}}(p))^{q/(p-q)}}\\
&  =\frac{(\left\Vert \nabla e_{n}\right\Vert _{q}^{q})^{p/(p-q)}}%
{(\lambda-\lambda_{r_{n}}(p))^{q/(p-q)}}\\
&  \leq\frac{(\left\vert \Omega\right\vert ^{(p-q)/p}\left\Vert \nabla
e_{n}\right\Vert _{p}^{q})^{p/(p-q)}}{(\lambda-\lambda_{r_{n}}(p))^{q/(p-q)}%
}=\left\vert \Omega\right\vert \left(  \frac{\lambda_{r_{n}}(p)}%
{\lambda-\lambda_{r_{n}}(p)}\right)  ^{q/(p-q)}.
\end{align*}

Recalling that $u_{n}\in\mathcal{N}_{\lambda,r_{n}}$ we have%
\begin{align*}
\left\Vert \nabla u_{n}\right\Vert _{p}^{p}  &  \leq\left\Vert \nabla
u_{n}\right\Vert _{p}^{p}+\left\Vert \nabla u_{n}\right\Vert _{q}^{q}\\
&  =\lambda\left\Vert u_{n}\right\Vert _{r_{n}}^{p}\\
&  \leq\lambda(\lambda_{r_{n}}(q))^{-p/q}\left\Vert \nabla u_{n}\right\Vert
_{q}^{p}\leq\lambda(\lambda_{r_{n}}(q))^{-p/q}\left\vert \Omega\right\vert
\left(  \frac{\lambda_{r_{n}}(p)}{\lambda-\lambda_{r_{n}}(p)}\right)
^{p/(p-q)},
\end{align*}
which gives us the boundedness of $\left\{  u_{n}\right\}  $ in $X_{p,q}$
since
\[
(\lambda_{r_{n}}(q))^{-p/q}\left(  \frac{\lambda_{r_{n}}(p)}{\lambda
-\lambda_{r_{n}}(p)}\right)  ^{p/(p-q)}\rightarrow(\lambda_{\infty}%
(q))^{-p/q}\left(  \frac{\lambda_{\infty}(p)}{\lambda-\lambda_{\infty}%
(p)}\right)  ^{p/(p-q)}.
\]

Thus, up to relabeling the sequence $\left\{  r_{n}\right\}  $ we can assume
that there exists a nonnegative function $u\in X_{p,q}$ such that
$u_{n}\rightharpoonup u$ in $X_{p,q}$ and $u_{n}\rightarrow u$ uniformly in
$\overline{\Omega}.$

We recall from (\ref{lowr}) that
\[
\left\Vert u_{n}\right\Vert _{r_{n}}\geq\left(  \frac{\lambda_{r_{n}}%
(q)}{\lambda}\right)  ^{1/(p-q)}.
\]
Hence, since $\left\Vert u_{n}\right\Vert _{r_{n}}\leq\left\Vert
u_{n}\right\Vert _{\infty}\left\vert \Omega\right\vert ^{1/r_{n}},$ we have
\[
\left\Vert u\right\Vert _{\infty}=\lim_{n\rightarrow\infty}\left\vert
\Omega\right\vert ^{1/r_{n}}\left\Vert u_{n}\right\Vert _{\infty}\geq
\lim_{n\rightarrow\infty}\left(  \frac{\lambda_{r_{n}}(q)}{\lambda}\right)
^{1/(p-q)}=\left(  \frac{\lambda_{\infty}(q)}{\lambda}\right)  ^{1/(p-q)}>0,
\]
that is, $u\not \equiv 0.$ Using this fact and
\[
\left\Vert \nabla u_{n}\right\Vert _{q}^{q}+\left\Vert \nabla u_{n}\right\Vert
_{p}^{p}=\lambda\left\Vert u_{n}\right\Vert _{r_{n}}^{p}\leq\lambda\left\Vert
u_{n}\right\Vert _{\infty}^{p}\left\vert \Omega\right\vert ^{p/r_{n}}%
\]
we obtain%
\begin{align*}
\left\Vert \nabla u\right\Vert _{p}^{p}  &  <\left\Vert \nabla u\right\Vert
_{q}^{q}+\left\Vert \nabla u\right\Vert _{p}^{p}\\
&  \leq\liminf_{n\rightarrow\infty}(\left\Vert \nabla u_{n}\right\Vert
_{q}^{q}+\left\Vert \nabla u_{n}\right\Vert _{p}^{p})\leq\lim_{n\rightarrow
\infty}(\lambda\left\Vert u_{n}\right\Vert _{\infty}^{p}\left\vert
\Omega\right\vert ^{p/r_{n}})=\lambda\left\Vert u\right\Vert _{\infty}^{p}.
\end{align*}

It follows that $tu\in\mathcal{N}_{\lambda,\infty}$ where $\ \ $%
\[
0<t=\left(  \frac{\left\Vert \nabla u\right\Vert _{q}^{q}}{\lambda\left\Vert
u\right\Vert _{\infty}^{p}-\left\Vert \nabla u\right\Vert _{p}^{p}}\right)
^{1/(p-q)}\leq1.
\]

Let us fix an arbitrary function $v\in\mathcal{N}_{\lambda,\infty}.$ We know
that
\[
\lim_{n\rightarrow\infty}\lambda\left\Vert v\right\Vert _{r_{n}}^{p}%
=\lambda\left\Vert v\right\Vert _{\infty}^{p}\quad\mathrm{and}\quad\left\Vert
\nabla v\right\Vert _{p}^{p}<\left\Vert \nabla v\right\Vert _{q}%
^{q}+\left\Vert \nabla v\right\Vert _{p}^{p}=\lambda\left\Vert v\right\Vert
_{\infty}^{p}.
\]
Consequently, there exists $n_{0}$ such that
\[
\left\Vert \nabla v\right\Vert _{p}^{p}<\lambda\left\Vert v\right\Vert
_{r_{n}}^{p},\quad\forall\,n\geq n_{0}.
\]
This implies that $t_{n}v\in\mathcal{N}_{\lambda,r_{n}}$ for all $n\geq
n_{0},$ where%
\[
t_{n}:=\left(  \frac{\left\Vert \nabla v\right\Vert _{q}^{q}}{\lambda
\left\Vert v\right\Vert _{r_{n}}^{p}-\left\Vert \nabla v\right\Vert _{p}^{p}%
}\right)  ^{1/(p-q)}\rightarrow\left(  \frac{\left\Vert \nabla v\right\Vert
_{q}^{q}}{\lambda\left\Vert v\right\Vert _{\infty}^{p}-\left\Vert \nabla
v\right\Vert _{p}^{p}}\right)  ^{1/(p-q)}=1.
\]

Thus,%
\[
\left(  \frac{1}{q}-\frac{1}{p}\right)  \left\Vert \nabla u_{n}\right\Vert
_{q}^{q}=I_{\lambda,r_{n}}(u_{n})\leq I_{\lambda,r_{n}}(v)=\left(  \frac{1}%
{q}-\frac{1}{p}\right)  \left\Vert \nabla(t_{n}v)\right\Vert _{q}^{q}%
,\quad\forall\,n\geq n_{0},
\]
so that%
\[
\left\Vert \nabla u\right\Vert _{q}^{q}\leq\liminf_{n\rightarrow\infty
}\left\Vert \nabla u_{n}\right\Vert _{q}^{q}\leq\lim_{n\rightarrow\infty
}(t_{n})^{q}\left\Vert \nabla v\right\Vert _{q}^{q}=\left\Vert \nabla
v\right\Vert _{q}^{q}.
\]
Therefore,%
\begin{equation}
J_{\lambda}(tu)=t^{q}\left(  \frac{1}{q}-\frac{1}{p}\right)  \left\Vert \nabla
u\right\Vert _{q}^{q}\leq t^{q}\left(  \frac{1}{q}-\frac{1}{p}\right)
\left\Vert \nabla v\right\Vert _{q}^{q}=t^{q}J_{\lambda}(v),\quad\forall
\,v\in\mathcal{N}_{\lambda,\infty}. \label{reva}%
\end{equation}
Let $\left\{  v_{n}\right\}  \subset\mathcal{N}_{\lambda,\infty}$ be such
that
\[
\lim_{n\rightarrow\infty}J_{\lambda}(v_{n})=\mu_{\lambda}=\inf_{u\in
\mathcal{N}_{\lambda,\infty}}J_{\lambda}(u).
\]
According to Remark \ref{infpos}, $\mu_{\lambda}>0.$ Thus, taking into account
(\ref{reva}) we obtain
\[
0<\mu_{\lambda}\leq J_{\lambda}(tu)\leq\lim_{n\rightarrow\infty}%
t^{q}J_{\lambda}(v_{n})=t^{q}\mu_{\lambda}\leq\mu_{\lambda}.
\]
These inequalities imply that: $t=1,$ $u\in\mathcal{N}_{\lambda,\infty}$ and
$J_{\lambda}(u)=\mu_{\lambda}.$ We have then shown that $u$ is a nonnegative
least energy solution of \eqref{pqinf}.

In order to conclude this proof we show that, in both cases above considered,
$u_{n}\rightarrow u$ strongly in $X_{p,q},$ up to a subsequence. In fact,
recalling that
\begin{equation}%
{\displaystyle\int_{\Omega}}
\left(  \left\vert \nabla u_{n}\right\vert ^{p-2}+\left\vert \nabla
u_{n}\right\vert ^{q-2}\right)  \nabla u_{n}\cdot\nabla v\mathrm{d}%
x=\lambda\left\Vert u_{n}\right\Vert _{r_{n}}^{p-r_{n}}%
{\displaystyle\int_{\Omega}}
\left\vert u_{n}\right\vert ^{r_{n}-1}v\mathrm{d}x,\quad\forall\,v\in X_{p,q},
\label{rnpq1}%
\end{equation}
$u_{n}\rightharpoonup u$ and $u_{n}\rightarrow u$ uniformly, we can see that
\[
\left\vert \lambda\left\Vert u_{n}\right\Vert _{r_{n}}^{p-r_{n}}%
{\displaystyle\int_{\Omega}}
\left\vert u_{n}\right\vert ^{r_{n}-1}(u_{n}-u)\mathrm{d}x\right\vert
\leq\lambda\left\Vert u_{n}\right\Vert _{\infty}^{p-1}\left\vert
\Omega\right\vert ^{\frac{p}{r_{n}}}\left\Vert u_{n}-u\right\Vert _{\infty
}\rightarrow0.
\]
That is, the right-hand side of (\ref{rnpq1}), with $v=u_{n}-u,$ goes to zero
as $n\rightarrow\infty.$

It follows that
\begin{equation}
A_{n}:=%
{\displaystyle\int_{\Omega}}
\left(  \left\vert \nabla u_{n}\right\vert ^{p-2}+\left\vert \nabla
u_{n}\right\vert ^{q-2}\right)  \nabla u_{n}\cdot\nabla(u_{n}-u)\mathrm{d}%
x\rightarrow0. \label{rnpq2}%
\end{equation}

The weak convergence $u_{n}\rightharpoonup u$ in $X_{p,q}$ also implies that%
\begin{equation}
B_{n}:=%
{\displaystyle\int_{\Omega}}
\left(  \left\vert \nabla u\right\vert ^{p-2}+\left\vert \nabla u\right\vert
^{q-2}\right)  \nabla u\cdot\nabla(u_{n}-u)\mathrm{d}x\rightarrow0.
\label{rnpq3}%
\end{equation}

Hence, taking into account (\ref{rnpq2})-(\ref{rnpq3}), noting that
\[%
{\displaystyle\int_{\Omega}}
\left(  \left\vert \nabla u_{n}\right\vert ^{p-2}\nabla u_{n}-\left\vert
\nabla u\right\vert ^{p-2}\nabla u+\left\vert \nabla u_{n}\right\vert
^{q-2}\nabla u_{n}-\left\vert \nabla u\right\vert ^{q-2}\nabla u\right)
\cdot\nabla(u_{n}-u)\mathrm{d}x=A_{n}-B_{n}%
\]
and recalling the following well-known inequality, valid for all $\xi,\eta
\in\mathbb{R}^{N}$ and $m\geq2,$%
\begin{equation}%
{\displaystyle\int_{\Omega}}
\left(  \left\vert \nabla\xi\right\vert ^{m-2}\nabla\xi-\left\vert \nabla
\eta\right\vert ^{m-2}\nabla\eta\right)  \cdot\nabla(\xi-\eta)\mathrm{d}%
x\geq2^{2-m}%
{\displaystyle\int_{\Omega}}
\left\vert \xi-\eta\right\vert ^{m}\mathrm{d}x \label{desvec}%
\end{equation}
we conclude that
\[
\left\Vert \nabla(u_{n}-u)\right\Vert _{q}\rightarrow0\quad\mathrm{and}%
\quad\left\Vert \nabla(u_{n}-u)\right\Vert _{p}\rightarrow0.
\]
Thus, $u_{n}\rightarrow u$ strongly in $X_{p,q}.$
\end{proof}

\section{The limit problem as $p\rightarrow\infty$\label{sec4}}

It is proved in \cite{EP16} that
\[
\lim_{m\rightarrow\infty}\left(  \lambda_{\infty}(m)\right)  ^{1/m}%
=\Lambda_{\infty},
\]
where $\Lambda_{\infty}$ is defined in (\ref{Laminf}). We recall that (see
\cite{JLM})
\[
\Lambda_{\infty}=\left\Vert \rho\right\Vert _{\infty}^{-1}%
\]
where $\rho:\overline{\Omega}\rightarrow\mathbb{R}$ denotes the distance
function to the boundary, given by%
\[
\rho(x)=\inf\left\{  \left\vert x-y\right\vert :y\in\partial\Omega\right\}  .
\]

We recall two well-known facts: $\left\vert \nabla\rho\right\vert =1$ almost
everywhere in $\Omega$ and $\rho\in W^{1,\infty}(\Omega)\cap C_{0}%
(\overline{\Omega})\subset W_{0}^{1,m}(\Omega)$ for all $m\in\lbrack
1,\infty).$

\begin{lemma}
Let $\lambda>\lambda_{\infty}(p)$ and consider $u$ a nonnegative least energy
solution of the boundary value problem%
\[
\left\{
\begin{array}
[c]{lll}%
-\left(  \Delta_{p}+\Delta_{q}\right)  u=\lambda\left\Vert u\right\Vert
_{\infty}^{p-1}\delta_{x_{u}} & \mathrm{in} & \Omega\\
u=0 & \mathrm{on} & \partial\Omega.
\end{array}
\right.
\]
Then%
\begin{equation}
\left\Vert \nabla u\right\Vert _{q}\leq\left\vert \Omega\right\vert ^{\frac
{1}{q}}\left(  \frac{\lambda_{\infty}(p)}{\lambda-\lambda_{\infty}(p)}\right)
^{\frac{1}{p-q}},\quad\mathrm{if}\,N<q<p, \label{estqp}%
\end{equation}
and
\begin{equation}
\left\Vert \nabla u\right\Vert _{q}\leq\left\vert \Omega\right\vert ^{\frac
{1}{q}}\left(  \frac{\lambda}{\lambda_{\infty}(p)}\right)  ^{\frac{1}{q-p}%
},\quad\mathrm{if}\,N<p<q. \label{estpq}%
\end{equation}

\end{lemma}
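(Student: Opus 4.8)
The plan is to reduce both estimates to the single structural fact that a least energy solution $u$ belongs to the Nehari set $\mathcal{N}_{\lambda,\infty}$, that is,
\[
\left\Vert \nabla u\right\Vert _{p}^{p}+\left\Vert \nabla u\right\Vert _{q}^{q}=\lambda\left\Vert u\right\Vert _{\infty}^{p}.
\]
When $N<q<p$ this is immediate from the definition of least energy solution; when $N<p<q$, where $u$ is a global minimizer of $J_{\lambda}$ on $X_{p,q}=W_{0}^{1,q}(\Omega)$, I would obtain it either by appealing to Proposition \ref{wsol} (testing \eqref{weakinf} with $v=u$) or, more directly, by differentiating $t\mapsto J_{\lambda}(tu)$ at $t=1$, which must vanish. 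In particular this identity gives $\left\Vert \nabla u\right\Vert _{q}^{q}\leq\lambda\left\Vert u\right\Vert _{\infty}^{p}$ and, in the case $q<p$, the strict inequality $\left\Vert \nabla u\right\Vert _{p}^{p}<\lambda\left\Vert u\right\Vert _{\infty}^{p}$. Throughout I would also use, directly from the definition of $\lambda_{\infty}(p)$, the elementary bound $\lambda_{\infty}(p)\left\Vert u\right\Vert _{\infty}^{p}\leq\left\Vert \nabla u\right\Vert _{p}^{p}$.

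For \eqref{estqp} (the case $N<q<p$) I would test the minimality of $u$ on $\mathcal{N}_{\lambda,\infty}$ against an explicit competitor. Fix $e\in X_{p,q}\setminus\{0\}$ realizing $\lambda_{\infty}(p)$, normalized so that $\left\Vert e\right\Vert _{\infty}=1$ and $\left\Vert \nabla e\right\Vert _{p}^{p}=\lambda_{\infty}(p)$; since $\lambda_{\infty}(p)<\lambda$, relation \eqref{vNehin} provides $t>0$ with $te\in\mathcal{N}_{\lambda,\infty}$ and $t^{p-q}=\left\Vert \nabla e\right\Vert _{q}^{q}/(\lambda-\lambda_{\infty}(p))$. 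Because $J_{\lambda}(w)=(\tfrac1q-\tfrac1p)\left\Vert \nabla w\right\Vert _{q}^{q}$ on $\mathcal{N}_{\lambda,\infty}$ and $u$ minimizes there, I get $\left\Vert \nabla u\right\Vert _{q}^{q}\leq t^{q}\left\Vert \nabla e\right\Vert _{q}^{q}=(\left\Vert \nabla e\right\Vert _{q}^{q})^{p/(p-q)}(\lambda-\lambda_{\infty}(p))^{-q/(p-q)}$. It then remains to bound the competitor: Hölder's inequality (valid since $q<p$) yields $\left\Vert \nabla e\right\Vert _{q}^{q}\leq\left\vert \Omega\right\vert ^{(p-q)/p}(\left\Vert \nabla e\right\Vert _{p}^{p})^{q/p}=\left\vert \Omega\right\vert ^{(p-q)/p}\lambda_{\infty}(p)^{q/p}$, and substituting gives \eqref{estqp} after taking $q$-th roots.

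For \eqref{estpq} (the case $N<p<q$) the Nehari identity already gives $\left\Vert \nabla u\right\Vert _{q}^{q}\leq\lambda\left\Vert u\right\Vert _{\infty}^{p}$, so it suffices to control $\left\Vert u\right\Vert _{\infty}^{p}$ by $\left\Vert \nabla u\right\Vert _{q}$. Here Hölder runs the other way: since $p<q$, $\left\Vert \nabla u\right\Vert _{p}^{p}\leq\left\vert \Omega\right\vert ^{(q-p)/q}\left\Vert \nabla u\right\Vert _{q}^{p}$, and combining with $\lambda_{\infty}(p)\left\Vert u\right\Vert _{\infty}^{p}\leq\left\Vert \nabla u\right\Vert _{p}^{p}$ produces $\left\Vert u\right\Vert _{\infty}^{p}\leq\lambda_{\infty}(p)^{-1}\left\vert \Omega\right\vert ^{(q-p)/q}\left\Vert \nabla u\right\Vert _{q}^{p}$. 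Feeding this into the Nehari bound and cancelling $\left\Vert \nabla u\right\Vert _{q}^{p}$ (legitimate as $u\not\equiv0$) leaves $\left\Vert \nabla u\right\Vert _{q}^{q-p}\leq\tfrac{\lambda}{\lambda_{\infty}(p)}\left\vert \Omega\right\vert ^{(q-p)/q}$, which is \eqref{estpq} upon taking $(q-p)$-th roots. I do not anticipate a genuine obstacle here: both estimates are essentially bookkeeping built on one Hölder interpolation between the $L^{p}$- and $L^{q}$-gradient norms, and the only point requiring care is securing the Nehari membership in the case $N<p<q$, since there $u$ is defined as an unconstrained minimizer rather than a constrained one.
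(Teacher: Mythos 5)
Your proof is correct and follows essentially the same route as the paper: for $N<q<p$ it tests the Nehari-constrained minimality of $u$ against the scaled extremal $te$ of $\lambda_{\infty}(p)$ and applies H\"{o}lder's inequality, and for $N<p<q$ it combines the identity $\left\Vert \nabla u\right\Vert _{p}^{p}+\left\Vert \nabla u\right\Vert _{q}^{q}=\lambda\left\Vert u\right\Vert _{\infty}^{p}$ with the Rayleigh bound $\lambda_{\infty}(p)\left\Vert u\right\Vert _{\infty}^{p}\leq\left\Vert \nabla u\right\Vert _{p}^{p}$ and H\"{o}lder, exactly as in the paper. The only (harmless) difference is that you make explicit the justification of the Nehari identity in the case $N<p<q$ (via Proposition \ref{wsol} or the derivative of $t\mapsto J_{\lambda}(tu)$ at $t=1$), a step the paper leaves implicit.
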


\begin{proof}
First we consider the case $N<q<p.$ Let $e\in X_{p,q}=W_{0}^{1,p}(\Omega)$ be
such that
\[
\left\Vert e\right\Vert _{\infty}=1\quad\mathrm{and}\quad\left\Vert \nabla
e\right\Vert _{p}^{p}=\lambda_{\infty}(p).
\]
Since
\[
\lambda\left\Vert e\right\Vert _{\infty}^{p}-\left\Vert \nabla e\right\Vert
_{p}^{p}=\lambda-\lambda_{\infty}(p)>0
\]
we have $te\in N_{\lambda,\infty},$ where%
\[
t:=\left(  \frac{\left\Vert \nabla e\right\Vert _{q}^{q}}{\lambda\left\Vert
e\right\Vert _{\infty}^{p}-\left\Vert \nabla e\right\Vert _{p}^{p}}\right)
^{1/(p-q)}=\left(  \frac{\left\Vert \nabla e\right\Vert _{q}^{q}}%
{\lambda-\lambda_{\infty}(p)}\right)  ^{1/(p-q)}.
\]

Noting that%
\[
0<\left(  \frac{1}{q}-\frac{1}{p}\right)  \left\Vert \nabla u\right\Vert
_{q}^{q}=I_{\lambda,\infty}(u)\leq I_{\lambda,\infty}(te)=\left(  \frac{1}%
{q}-\frac{1}{p}\right)  \left\Vert \nabla(te)\right\Vert _{q}^{q}%
\]
we obtain (by exploring the expression of $t$ and using the H\"{o}lder
inequality)
\begin{align*}
\left\Vert \nabla u\right\Vert _{q}^{q}  &  \leq\left\Vert \nabla
(te)\right\Vert _{q}^{q}\\
&  =\frac{\left\Vert \nabla e\right\Vert _{q}^{q^{2}/(p-q)}\left\Vert \nabla
e\right\Vert _{q}^{q}}{(\lambda-\lambda_{\infty}(p))^{q/(p-q)}}\\
&  =\frac{(\left\Vert \nabla e\right\Vert _{q}^{q})^{p/(p-q)}}{(\lambda
-\lambda_{\infty}(p))^{q/(p-q)}}\leq\frac{(\left\vert \Omega\right\vert
^{(p-q)/p}\left\Vert \nabla e\right\Vert _{p}^{q})^{p/(p-q)}}{(\lambda
-\lambda_{\infty}(p))^{q/(p-q)}}=\left\vert \Omega\right\vert \left(
\frac{\lambda_{\infty}(p)}{\lambda-\lambda_{\infty}(p)}\right)  ^{q/(p-q)}.
\end{align*}
This leads to the estimate in (\ref{estqp}).

The estimate in (\ref{estpq}) is a direct consequence of the following
\begin{align*}
\left\Vert \nabla u\right\Vert _{q}^{q}  &  \leq\left\Vert \nabla u\right\Vert
_{q}^{q}+\left\Vert \nabla u\right\Vert _{p}^{p}\\
&  =\lambda\left\Vert u\right\Vert _{\infty}^{p}\leq\frac{\lambda}%
{\lambda_{\infty}(p)}\left\Vert \nabla u\right\Vert _{p}^{p}\leq\frac{\lambda
}{\lambda_{\infty}(p)}\left\vert \Omega\right\vert ^{\frac{q-p}{q}}\left\Vert
\nabla u\right\Vert _{q}^{p}.
\end{align*}

\end{proof}

We recall that
\[
\lim_{p\rightarrow\infty}\frac{q(p)}{p}=\left\{
\begin{array}
[c]{lll}%
Q\in(0,1) & \mathrm{if} & N<q<p\\
Q\in(1,\infty) & \mathrm{if} & N<p<q.
\end{array}
\right.
\]

\begin{lemma}
\label{lemapqdelta}Let $\Lambda>\Lambda_{\infty}$ and $m>N$ be fixed. Take
$\lambda_{p}>0$ satisfying%
\[
\lim_{p\rightarrow\infty}(\lambda_{p})^{\frac{1}{p}}=\Lambda
\]
and denote by $u_{p}$ a nonnegative least energy solution of \
\begin{equation}
\left\{
\begin{array}
[c]{lll}%
-\left(  \Delta_{p}+\Delta_{q(p)}\right)  u=\lambda_{p}\left\Vert u\right\Vert
_{\infty}^{p-1}\delta_{x_{u}} & \mathrm{in} & \Omega\\
u=0 & \mathrm{on} & \partial\Omega.
\end{array}
\right.  \label{pqdelta}%
\end{equation}
We affirm that
\begin{equation}
\limsup_{p\rightarrow\infty}\left\Vert \nabla u_{p}\right\Vert _{m}%
\leq\left\vert \Omega\right\vert ^{\frac{1}{m}}\left(  \frac{\Lambda_{\infty}%
}{\Lambda}\right)  ^{\frac{1}{1-Q}}\label{LS}%
\end{equation}
and
\begin{equation}
\liminf_{p\rightarrow\infty}\left\Vert u_{p}\right\Vert _{\infty}\geq\left\{
\begin{array}
[c]{lll}%
(\Lambda_{\infty})^{-1}\left(  \Lambda_{\infty}/\Lambda\right)  ^{\frac
{1}{1-Q}} & \mathrm{if} & Q\in(0,1)\\
&  & \\
(\Lambda_{\infty})^{-1} & \mathrm{if} & Q\in(1,\infty).
\end{array}
\right.  \label{LI}%
\end{equation}

\end{lemma}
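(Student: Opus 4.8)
The plan is to prove the two inequalities separately, and to split \eqref{LI} according to the sign of $1-Q$, since the regimes $N<q(p)<p$ and $N<p<q(p)$ require different mechanisms. Throughout I will lean on the three asymptotic facts already available: $\frac{q(p)}{p}\to Q$, $(\lambda_p)^{1/p}\to\Lambda$, and $(\lambda_\infty(m))^{1/m}\to\Lambda_\infty$. For the gradient bound \eqref{LS} I would feed the estimates of the preceding lemma into H\"{o}lder's inequality. Applying \eqref{estqp} (when $N<q(p)<p$) or \eqref{estpq} (when $N<p<q(p)$) with $\lambda=\lambda_p$ and $q=q(p)$ — legitimate for all large $p$, since $(\lambda_p)^{1/p}\to\Lambda>\Lambda_\infty$ forces $\lambda_p>\lambda_\infty(p)$ eventually — bounds $\left\Vert \nabla u_p\right\Vert_{q(p)}$. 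Because $q(p)\to\infty$ we have $q(p)>m$ for large $p$, so $\left\Vert \nabla u_p\right\Vert_m\leq\left\vert\Omega\right\vert^{\frac1m-\frac1{q(p)}}\left\Vert \nabla u_p\right\Vert_{q(p)}$, and the factor $\left\vert\Omega\right\vert^{1/q(p)}$ supplied by the lemma combines to leave exactly $\left\vert\Omega\right\vert^{1/m}$. It remains to pass to the limit in the residual factor, namely $\left(\lambda_\infty(p)/(\lambda_p-\lambda_\infty(p))\right)^{1/(p-q(p))}$ or $\left(\lambda_p/\lambda_\infty(p)\right)^{1/(q(p)-p)}$; taking logarithms, writing $\frac{1}{p\mp q(p)}=\frac1p\cdot\frac{p}{p\mp q(p)}$, and using $(\lambda_p-\lambda_\infty(p))^{1/p}\to\Lambda$ (which holds because $\lambda_\infty(p)/\lambda_p\to0$), both factors converge to $(\Lambda_\infty/\Lambda)^{1/(1-Q)}$, giving \eqref{LS}.

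For \eqref{LI} with $Q\in(0,1)$, so $N<q(p)<p$ and $u_p\in\mathcal{N}_{\lambda_p,\infty}$, I would chain two inequalities. The definition of $\lambda_\infty(q(p))$ gives $\lambda_\infty(q(p))\left\Vert u_p\right\Vert_\infty^{q(p)}\leq\left\Vert \nabla u_p\right\Vert_{q(p)}^{q(p)}$, while membership in the Nehari set gives $\left\Vert \nabla u_p\right\Vert_{q(p)}^{q(p)}\leq\left\Vert \nabla u_p\right\Vert_p^p+\left\Vert \nabla u_p\right\Vert_{q(p)}^{q(p)}=\lambda_p\left\Vert u_p\right\Vert_\infty^p$. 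Since $q(p)<p$, combining them yields the lower bound $\left\Vert u_p\right\Vert_\infty\geq(\lambda_\infty(q(p))/\lambda_p)^{1/(p-q(p))}$. Passing to the limit as before — now using $\frac1p\ln\lambda_\infty(q(p))=\frac{q(p)}{p}\cdot\frac{1}{q(p)}\ln\lambda_\infty(q(p))\to Q\ln\Lambda_\infty$ — one checks that the right-hand side tends to $\Lambda_\infty^{-1}(\Lambda_\infty/\Lambda)^{1/(1-Q)}$, exactly the claimed bound.

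For \eqref{LI} with $Q\in(1,\infty)$, so $N<p<q(p)$ and $u_p$ is a global minimizer of $J_{\lambda_p}$ on $X_{p,q(p)}=W_0^{1,q(p)}(\Omega)$, the chain above only produces an upper bound on $\left\Vert u_p\right\Vert_\infty$, so I would instead use minimality against the distance function $\rho$, an admissible competitor with $\left\vert\nabla\rho\right\vert=1$ a.e. (hence $\left\Vert \nabla\rho\right\Vert_s^s=\left\vert\Omega\right\vert$ for every $s$) and $\left\Vert \rho\right\Vert_\infty=\Lambda_\infty^{-1}$. Dropping the nonnegative gradient terms in $J_{\lambda_p}(u_p)$ gives $-\frac{\lambda_p}{p}\left\Vert u_p\right\Vert_\infty^p\leq J_{\lambda_p}(u_p)\leq J_{\lambda_p}(\rho)$, whence $\left\Vert u_p\right\Vert_\infty^p\geq-\frac{p}{\lambda_p}J_{\lambda_p}(\rho)=\Lambda_\infty^{-p}-\frac{\left\vert\Omega\right\vert}{\lambda_p}\left(1+\tfrac{p}{q(p)}\right)$. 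As $\Lambda>\Lambda_\infty$, the subtracted term is $O(\Lambda^{-p})$ and hence negligible relative to $\Lambda_\infty^{-p}$ (since $(\Lambda_\infty/\Lambda)^p\to0$); taking $p$-th roots yields $\liminf_{p\to\infty}\left\Vert u_p\right\Vert_\infty\geq\Lambda_\infty^{-1}$.

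The routine but delicate part throughout is the bookkeeping of $p$-th roots: in every display I must identify which term dominates after exponentiation — for instance that $\lambda_\infty(p)$ is swamped by $\lambda_p$, so that $(\lambda_p-\lambda_\infty(p))^{1/p}\to\Lambda$, and that the additive $\left\vert\Omega\right\vert$-terms in $J_{\lambda_p}(\rho)$ are dominated by $\lambda_p\Lambda_\infty^{-p}$. The only genuinely new ingredient, as opposed to recycling the earlier estimates, is the choice of the competitor $\rho$ in the regime $Q>1$, where no Nehari identity is available and the lower bound on $\left\Vert u_p\right\Vert_\infty$ must be extracted directly from the energy level; I expect this step to be the main obstacle.
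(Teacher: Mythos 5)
Your proposal is correct and follows essentially the same route as the paper: the preceding lemma's estimates \eqref{estqp}--\eqref{estpq} plus H\"{o}lder's inequality yield \eqref{LS}; the chain $\lambda_{\infty}(q(p))\left\Vert u_{p}\right\Vert _{\infty}^{q(p)}\leq\left\Vert \nabla u_{p}\right\Vert _{q(p)}^{q(p)}\leq\lambda_{p}\left\Vert u_{p}\right\Vert _{\infty}^{p}$ yields \eqref{LI} for $Q\in(0,1)$; and energy comparison with the distance function $\rho$ yields \eqref{LI} for $Q\in(1,\infty)$, exactly as in the paper. The only deviations are cosmetic and harmless: you take limits via logarithms where the paper sandwiches $(\lambda_{p}/\lambda_{\infty}(p))^{1/p}$ between $\epsilon$-perturbed constants $a_{\epsilon},b_{\epsilon}$, and in the case $Q>1$ you lower-bound $J_{\lambda_{p}}(u_{p})$ by simply dropping the nonnegative gradient terms, whereas the paper invokes the Nehari-type identity $J_{\lambda_{p}}(u_{p})=\left(\tfrac{1}{q(p)}-\tfrac{1}{p}\right)\left\Vert \nabla u_{p}\right\Vert _{q(p)}^{q(p)}$ combined with $\left\Vert \nabla u_{p}\right\Vert _{q(p)}^{q(p)}\leq\lambda_{p}\left\Vert u_{p}\right\Vert _{\infty}^{p}$ --- your variant is in fact marginally cleaner, avoiding the prefactor $\left(1-\tfrac{p}{q(p)}\right)^{-1}$ while leading to the same asymptotic conclusion.
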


\begin{proof}
Since
\[
\lim_{p\rightarrow\infty}\left(  \lambda_{\infty}(p)\right)  ^{1/p}%
=\Lambda_{\infty}<\Lambda=\lim_{p\rightarrow\infty}\left(  \lambda_{p}\right)
^{1/p},
\]
we can see that $\lambda_{\infty}(p)<\lambda_{p}$ for all $p$ large enough.
Therefore, the existence of a least energy solution $u_{p}$ follows from
Proposition \ref{existpqinf}.

Let us fix $p_{n}\rightarrow\infty$ and simplify the notation by defining
\[
u_{n}:=u_{p_{n}},\quad q_{n}:=q(p_{n})\quad\mathrm{and}\quad\lambda
_{n}:=\lambda_{p_{n}}.
\]
Let $n_{0}\in\mathbb{N}$ such that $m<\min\left\{  q_{n},p_{n}\right\}  $ for
all $n\geq n_{0}.$ Now, fix $0<\epsilon<(\Lambda/\Lambda_{\infty})-1$ and
consider $n_{1}\geq n_{0}$ such that%
\[
1<a_{\epsilon}:=\frac{\Lambda}{\Lambda_{\infty}}-\epsilon\leq\left(
\frac{\lambda_{n}}{\lambda_{\infty}(p_{n})}\right)  ^{\frac{1}{p_{n}}}%
\leq\frac{\Lambda}{\Lambda_{\infty}}+\epsilon=:b_{\epsilon},\quad
\forall\,n\geq n_{1}.
\]

First we prove (\ref{LS}) in the case $Q\in(0,1),$ so that $N<q_{n}<p_{n}.$
Thus, according to (\ref{estqp}), with $\lambda=\lambda_{n},$ we have%
\begin{equation}
\left\Vert \nabla u_{n}\right\Vert _{q_{n}}\leq\left\vert \Omega\right\vert
^{1/q_{n}}\left(  \frac{\lambda_{\infty}(p_{n})}{\lambda_{n}-\lambda_{\infty
}(p_{n})}\right)  ^{1/(p_{n}-q_{n})}=\left\vert \Omega\right\vert ^{1/q_{n}%
}\left(  \frac{1}{(\lambda_{n}/\lambda_{\infty}(p_{n}))-1}\right)
^{1/(p_{n}-q_{n})}. \label{LS1}%
\end{equation}

Applying the H\"{o}lder inequality in (\ref{LS1})
\begin{align*}
\left\Vert \nabla u_{n}\right\Vert _{m}  &  \leq\left\vert \Omega\right\vert
^{1/m-1/q_{n}}\left\Vert \nabla u_{n}\right\Vert _{q_{n}}\\
&  \leq\left\vert \Omega\right\vert ^{1/m-1/q_{n}}\left\vert \Omega\right\vert
^{1/q_{n}}\left(  \frac{1}{(\lambda_{n}/\lambda_{\infty}(p_{n}))-1}\right)
^{1/(p_{n}-q_{n})}\\
&  \leq\left\vert \Omega\right\vert ^{1/m}\left(  \frac{1}{(a_{\epsilon
})^{p_{n}}-1}\right)  ^{1/(p_{n}-q_{n})},\quad\forall\,n\geq n_{1},
\end{align*}
Hence,
\begin{align*}
\limsup_{n\rightarrow\infty}\left\Vert \nabla u_{n}\right\Vert _{m}  &
\leq\left\vert \Omega\right\vert ^{1/m}\lim_{p\rightarrow\infty}\left(
(a_{\epsilon})^{p_{n}}-1\right)  ^{-1/(p_{n}-q_{n})}\\
&  =\left\vert \Omega\right\vert ^{1/m}\lim_{p\rightarrow\infty}\left(
(a_{\epsilon})^{p_{n}}-1\right)  ^{-\frac{1}{p_{n}}\frac{1}{1-(q_{n}/p_{n})}%
}=\left\vert \Omega\right\vert ^{1/m}(a_{\epsilon})^{1/(1-Q)}%
\end{align*}
since%
\[
\lim_{p\rightarrow\infty}\left(  (a_{\epsilon})^{p}-1\right)  ^{-1/p}%
=\lim_{p\rightarrow\infty}\exp\left(  -\frac{1}{p}\log\left(  (a_{\epsilon
})^{p}-1\right)  \right)  =a_{\epsilon}.
\]
Letting $\epsilon\rightarrow0,$ we obtain (\ref{LS}) when $Q\in(0,1).$

Now, we prove (\ref{LS}) when $Q\in(1,\infty),$ in which case $N<p_{n}<q_{n}.$
By the H\"{o}lder inequality and (\ref{estpq}), with $\lambda=\lambda_{n},$ we
have%
\begin{align*}
\left\Vert \nabla u_{n}\right\Vert _{m}  &  \leq\left\vert \Omega\right\vert
^{1/m-1/q_{n}}\left\Vert \nabla u_{n}\right\Vert _{q_{n}}\\
&  \leq\left\vert \Omega\right\vert ^{1/m-1/q_{n}}\left\vert \Omega\right\vert
^{1/q_{n}}\left(  \frac{\lambda_{n}}{\lambda_{\infty}(p_{n})}\right)
^{\frac{1}{q_{n}-p_{n}}}\\
&  \leq\left\vert \Omega\right\vert ^{1/m}\left(  b_{\epsilon}\right)
^{p_{n}\frac{1}{q_{n}-p_{n}}},\quad\forall\,n\geq n_{1}.
\end{align*}

Therefore,
\[
\limsup_{n\rightarrow\infty}\left\Vert \nabla u_{n}\right\Vert _{m}%
\leq\left\vert \Omega\right\vert ^{1/m}\lim_{n\rightarrow\infty}\left(
b_{\epsilon}\right)  ^{p_{n}\frac{1}{q_{n}-p_{n}}}=\left\vert \Omega
\right\vert ^{1/m}\lim_{n\rightarrow\infty}\left(  b_{\epsilon}\right)
^{\frac{1}{(q_{n}/p_{n})-1}}=\left\vert \Omega\right\vert ^{1/m}\left(
b_{\epsilon}\right)  ^{1/Q-1}.
\]
Letting $\epsilon\rightarrow0,$ we also obtain (\ref{LS}) when $Q\in
(1,\infty).$

Let us pass to the proof of (\ref{LI}). In the case $Q\in(0,1)$, in which
$N<q_{n}<p_{n},$ we have%
\begin{align*}
\left\Vert u_{n}\right\Vert _{\infty}^{q_{n}}  &  \leq\frac{1}{\lambda
_{\infty}(q_{n})}\left\Vert \nabla u_{n}\right\Vert _{q_{n}}^{q_{n}}\\
&  \leq\frac{1}{\lambda_{\infty}(q_{n})}\left(  \left\Vert \nabla
u_{n}\right\Vert _{q_{n}}^{q_{n}}+\left\Vert \nabla u_{n}\right\Vert _{p_{n}%
}^{p_{n}}\right) \\
&  =\frac{\lambda_{n}}{\lambda_{\infty}(q_{n})}\left\Vert u_{n}\right\Vert
_{\infty}^{p_{n}}\leq\left(  b_{\epsilon}\right)  ^{p_{n}}\frac{\lambda
_{\infty}(p_{n})}{\lambda_{\infty}(q_{n})}\left\Vert u_{n}\right\Vert
_{\infty}^{p_{n}}.
\end{align*}
It follows that
\begin{align*}
\liminf_{n\rightarrow\infty}\left\Vert u_{n}\right\Vert _{\infty}  &  \geq
\lim_{n\rightarrow\infty}\left(  \left(  b_{\epsilon}\right)  ^{-p_{n}}%
\frac{\lambda_{\infty}(q_{n})}{\lambda_{\infty}(p_{n})}\right)  ^{1/(p_{n}%
-q_{n})}\\
&  =\lim_{n\rightarrow\infty}\left(  b_{\epsilon}\right)  ^{-\frac{1}%
{1-(q_{n}/p_{n})}}\lim_{n\rightarrow\infty}\left(  \lambda_{\infty}%
(q_{n})^{\frac{1}{q_{n}}}\right)  ^{q_{n}/(p_{n}-q_{n})}\lim_{n\rightarrow
\infty}\left(  \lambda_{\infty}(p_{n})^{-\frac{1}{p_{n}}}\right)
^{p_{n}/(p_{n}-q_{n})}\\
&  =\left(  b_{\epsilon}\right)  ^{-\frac{1}{1-Q}}(\Lambda_{\infty}%
)^{Q/(1-Q)}(\Lambda_{\infty})^{-1/(1-Q)}=\left(  b_{\epsilon}\right)
^{-\frac{1}{1-Q}}(\Lambda_{\infty})^{-1}.
\end{align*}
Thus, making $\epsilon\rightarrow0$ we obtain (\ref{LI}) in the case
$Q\in(0,1).$

As for the case $Q\in(1,\infty)$, in which $N<p_{n}<q_{n},$ we have
\[
\left(  \frac{1}{q_{n}}-\frac{1}{p_{n}}\right)  \left\Vert \nabla
u_{n}\right\Vert _{q_{n}}^{q_{n}}=I_{\lambda_{n},\infty}(u_{n})\leq
I_{\lambda_{n},\infty}(\rho)=\frac{\left\vert \Omega\right\vert }{q_{n}}%
+\frac{\left\vert \Omega\right\vert }{p_{n}}-\frac{\lambda_{n}}{p_{n}%
}\left\Vert \rho\right\Vert _{\infty}^{p_{n}}%
\]
since $\rho\in X_{p_{n},q_{n}}=W_{0}^{1,q_{n}}(\Omega)$ and $\left\vert
\nabla\rho\right\vert =1$ almost everywhere.

Hence, since $\left\Vert \nabla u_{n}\right\Vert _{q_{n}}^{q_{n}}%
\leq\left\Vert \nabla u_{n}\right\Vert _{p_{n}}^{p_{n}}+\left\Vert \nabla
u_{n}\right\Vert _{q_{n}}^{q_{n}}=\lambda_{n}\left\Vert u_{n}\right\Vert
_{\infty}^{p_{n}}$ and $\left\Vert \rho\right\Vert _{\infty}^{-1}%
=\Lambda_{\infty},$ we obtain%
\[
\frac{\lambda_{n}}{p_{n}(\Lambda_{\infty})^{p_{n}}}\leq\left\vert
\Omega\right\vert \left(  \frac{1}{p_{n}}+\frac{1}{q_{n}}\right)  +\left(
\frac{1}{p_{n}}-\frac{1}{q_{n}}\right)  \lambda_{n}\left\Vert u_{n}\right\Vert
_{\infty}^{p_{n}},
\]
so that
\[
\left\Vert u_{n}\right\Vert _{\infty}^{p_{n}}\geq\left(  1-\frac{p_{n}}{q_{n}%
}\right)  ^{-1}\left[  \frac{1}{(\Lambda_{\infty})^{p_{n}}}-\frac{\left\vert
\Omega\right\vert }{\lambda_{n}}\left(  1+\frac{p_{n}}{q_{n}}\right)  \right]
.
\]

Since
\[
\left[  \frac{\left\vert \Omega\right\vert}{\lambda_{n}} \left(
1+\frac{p_{n}}{q_{n}}\right)  \right]  ^{\frac{1}{p_{n}}}\rightarrow\frac
{1}{\Lambda}<\frac{1}{\Lambda_{\infty}},\quad\forall\,n\geq n_{1}
\]
we can assume that%
\[
\frac{\left\vert \Omega\right\vert }{\lambda_{n}}\left(  1+\frac{p_{n}}{q_{n}%
}\right)  \leq\left[  \frac{1}{2}\left(  \frac{1}{\Lambda}+\frac{1}%
{\Lambda_{\infty}}\right)  \right]  ^{p_{n}}=\left(  \frac{\Lambda_{\infty
}+\Lambda}{2\Lambda\Lambda_{\infty}}\right)  ^{p_{n}},\quad\forall\,n\geq
n_{1}.
\]

Hence,
\begin{align*}
\left\Vert u_{n}\right\Vert _{\infty}  & \geq\left(  1-\frac{p_{n}}{q_{n}%
}\right)  ^{-\frac{1}{p_{n}}}\left[  \frac{1}{(\Lambda_{\infty})^{p_{n}}%
}-\left(  \frac{\Lambda_{\infty}+\Lambda}{2\Lambda\Lambda_{\infty}}\right)
^{p_{n}}\right]  ^{\frac{1}{p_{n}}}\\
& =\left(  1-\frac{p_{n}}{q_{n}}\right)  ^{-\frac{1}{p_{n}}}\frac{1}%
{\Lambda_{\infty}}\left[  1-\left(  \frac{\Lambda_{\infty}+\Lambda}{2\Lambda
}\right)  ^{p_{n}}\right]  ^{\frac{1}{p_{n}}}>0,\quad\forall\,n\geq n_{1}.
\end{align*}
Therefore,
\[
\liminf_{p\rightarrow\infty}\left\Vert u_{n}\right\Vert _{\infty}\geq\frac
{1}{\Lambda_{\infty}}\lim_{p\rightarrow\infty}\left(  1-\frac{p_{n}}{q_{n}%
}\right)  ^{-\frac{1}{p_{n}}}\left[  1-\left(  \frac{\Lambda_{\infty}+\Lambda
}{2\Lambda}\right)  ^{p_{n}}\right]  ^{\frac{1}{p_{n}}}=\frac{1}%
{\Lambda_{\infty}}.
\]

\end{proof}

\begin{theorem}
\label{uinf}Let $\Lambda>\Lambda_{\infty}$ be fixed and take $\lambda_{p}>0$
satisfying
\[
\lim_{p\rightarrow\infty}(\lambda_{p})^{\frac{1}{p}}=\Lambda.
\]
Denote by $u_{p}$ a nonnegative least energy solution of \eqref{pqdelta} and
by $x_{p}$ the only maximum point of $u_{p}$ (that is $x_{p}:=x_{u_{p}}$).
There exists a sequence $p_{n}\rightarrow\infty,$ a point $x_{\Lambda}%
\in\Omega$ and a function $u_{\Lambda}\in W^{1,\infty}(\Omega)\cap
C_{0}(\overline{\Omega})$ such that $x_{p_{n}}\rightarrow x_{\Lambda}$ and
$u_{p_{n}}\rightarrow u_{\Lambda}$ uniformly in $\overline{\Omega}.$ Moreover,%
\begin{equation}
\left\Vert \nabla u_{\Lambda}\right\Vert _{\infty}\leq\left(  \frac
{\Lambda_{\infty}}{\Lambda}\right)  ^{\frac{1}{1-Q}} \label{LSi}%
\end{equation}
and%
\begin{equation}
u_{\Lambda}(x_{\Lambda})=\left\Vert u_{\Lambda}\right\Vert _{\infty}%
\geq\left\{
\begin{array}
[c]{lll}%
(\Lambda_{\infty})^{-1}\left(  \Lambda_{\infty}/\Lambda\right)  ^{\frac
{1}{1-Q}} & \mathrm{if} & Q\in(0,1)\\
&  & \\
(\Lambda_{\infty})^{-1} & \mathrm{if} & Q\in(1,\infty).
\end{array}
\right.  \label{LIi}%
\end{equation}

\end{theorem}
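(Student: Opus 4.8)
The plan is to combine the uniform estimates of Lemma \ref{lemapqdelta} with the compactness of the Morrey embedding, a diagonal extraction over the integrability exponents, and then to read off the two bounds by weak lower semicontinuity and uniform convergence. First I would fix any sequence $p_n\rightarrow\infty$ and abbreviate $u_n:=u_{p_n}$ (these solutions exist for $n$ large, since $\lambda_\infty(p)<\lambda_p$ eventually, exactly as at the start of the proof of Lemma \ref{lemapqdelta}). For each fixed $m>N$, estimate \eqref{LS} gives $\limsup_{n}\left\Vert \nabla u_n\right\Vert_m<\infty$, so $\{u_n\}$ is bounded in $W_0^{1,m}(\Omega)$ for $n$ large. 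By Morrey's inequality the embedding $W_0^{1,m}(\Omega)\hookrightarrow C(\overline{\Omega})$ is compact; applying this successively to $m=m_k:=N+k$ ($k\in\mathbb{N}$) and passing to a diagonal subsequence, still denoted $\{u_n\}$, I obtain a single subsequence and a function $u_\Lambda$ with $u_n\rightharpoonup u_\Lambda$ weakly in $W_0^{1,m}(\Omega)$ for every $m>N$ and $u_n\rightarrow u_\Lambda$ uniformly in $\overline{\Omega}$. Since each $u_n$ is nonnegative and vanishes on $\partial\Omega$, the uniform limit inherits both properties, giving $u_\Lambda\in C_0(\overline{\Omega})$ with $u_\Lambda\geq0$.

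Next I would upgrade $u_\Lambda$ to $W^{1,\infty}(\Omega)$ and establish \eqref{LSi}. For each fixed $m>N$, weak lower semicontinuity of the norm combined with \eqref{LS} yields
\[
\left\Vert \nabla u_\Lambda\right\Vert_m\leq\liminf_{n\rightarrow\infty}\left\Vert \nabla u_n\right\Vert_m\leq\left\vert \Omega\right\vert^{1/m}\left(\frac{\Lambda_\infty}{\Lambda}\right)^{\frac{1}{1-Q}}.
\]
Letting $m\rightarrow\infty$ and using the standard fact that $\left\Vert \nabla u_\Lambda\right\Vert_m\rightarrow\left\Vert \nabla u_\Lambda\right\Vert_\infty$ in $[0,\infty]$ (valid because $\left\vert \Omega\right\vert<\infty$), together with $\left\vert \Omega\right\vert^{1/m}\rightarrow1$, I conclude that $\nabla u_\Lambda\in L^\infty(\Omega)$, hence $u_\Lambda\in W^{1,\infty}(\Omega)$, and that \eqref{LSi} holds.

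Finally I would treat the maximum points and \eqref{LIi}. Since $\overline{\Omega}$ is compact, a further subsequence satisfies $x_n:=x_{p_n}\rightarrow x_\Lambda\in\overline{\Omega}$. The uniform convergence $u_n\rightarrow u_\Lambda$ and the continuity of $u_\Lambda$ give $u_n(x_n)\rightarrow u_\Lambda(x_\Lambda)$, while $u_n(x_n)=\left\Vert u_n\right\Vert_\infty\rightarrow\left\Vert u_\Lambda\right\Vert_\infty$; hence $u_\Lambda(x_\Lambda)=\left\Vert u_\Lambda\right\Vert_\infty$. Combining $\left\Vert u_\Lambda\right\Vert_\infty=\lim_n\left\Vert u_n\right\Vert_\infty$ with the lower bound \eqref{LI} of Lemma \ref{lemapqdelta} produces exactly \eqref{LIi}. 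As the right-hand side of \eqref{LIi} is strictly positive while $u_\Lambda\equiv0$ on $\partial\Omega$, the maximum is attained in the interior, forcing $x_\Lambda\in\Omega$.

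The main obstacle is the passage from the family of $L^m$-bounds to a genuine $W^{1,\infty}$ bound: one must extract a single subsequence that serves simultaneously for all exponents $m>N$ (the diagonal step) and then justify interchanging the limits $n\rightarrow\infty$ and $m\rightarrow\infty$ through lower semicontinuity followed by $\left\Vert \nabla u_\Lambda\right\Vert_m\rightarrow\left\Vert \nabla u_\Lambda\right\Vert_\infty$. Everything else—boundedness, compactness, identification of the limit, and the two numerical bounds—follows directly from Lemma \ref{lemapqdelta} and the uniform convergence.
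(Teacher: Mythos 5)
Your proposal is correct and takes essentially the same route as the paper: the bounds of Lemma \ref{lemapqdelta} combined with the compact Morrey embedding give a uniformly convergent subsequence, weak lower semicontinuity of $\left\Vert \nabla\cdot\right\Vert _{m}$ followed by letting $m\rightarrow\infty$ yields \eqref{LSi}, and uniform convergence together with compactness of $\overline{\Omega}$ and the lower bound \eqref{LI} gives $x_{\Lambda}\in\Omega$ and \eqref{LIi}. Your explicit diagonal extraction over $m_{k}=N+k$ only makes rigorous a step the paper leaves implicit (a single subsequence serving all exponents $m>N$), a presentational refinement rather than a different argument.
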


\begin{proof}
Let $p_{n}\rightarrow\infty$ and $N<m<\infty.$ It follows from the previous
lemma that $\left\{  u_{p_{n}}\right\}  $ is bounded in $W_{0}^{1,m}(\Omega)$.
Thus, up to a subsequence, $u_{p_{n}}$ converges weakly in $W_{0}^{1,m}%
(\Omega)$ and uniformly in $\overline{\Omega}$ to a nonnegative function
$u_{\Lambda}\in W_{0}^{1,m}(\Omega)\cap C(\overline{\Omega}).$ Therefore, in
view of (\ref{LS}) we have
\[
\left\Vert \nabla u_{\Lambda}\right\Vert _{m}\leq\liminf_{n\rightarrow\infty
}\left\Vert \nabla u_{p_{n}}\right\Vert _{m}\leq\limsup_{n\rightarrow\infty
}\left\Vert \nabla u_{p_{n}}\right\Vert _{m}\leq\left\vert \Omega\right\vert
^{\frac{1}{m}}\left(  \frac{\Lambda_{\infty}}{\Lambda}\right)  ^{\frac{1}%
{1-Q}}.
\]
Hence, noting that $m\in(N,\infty)$ is arbitrary, we conclude that
$u_{\Lambda}\in W^{1,\infty}(\Omega)$ and%
\[
\left\Vert \nabla u_{\Lambda}\right\Vert _{\infty}\leq\lim_{m\rightarrow
\infty}\left\vert \Omega\right\vert ^{\frac{1}{m}}\left(  \frac{\Lambda
_{\infty}}{\Lambda}\right)  ^{\frac{1}{Q-1}}=\left(  \frac{\Lambda_{\infty}%
}{\Lambda}\right)  ^{\frac{1}{1-Q}},
\]
which is (\ref{LSi}).

The uniform convergence and (\ref{LI}) imply (\ref{LIi}), which in turn, shows
that $\left\Vert u_{\Lambda}\right\Vert _{\infty}>0.$ Taking into account that
$\left\{  x_{p_{n}}\right\}  $ is bounded, we can assume (up to relabeling the
sequence $\left\{  p_{n}\right\}  $) that $x_{p_{n}}\rightarrow x_{\Lambda}$
for some $x_{\Lambda}\in\overline{\Omega}.$ The uniform convergence also
implies that $u_{\Lambda}(x_{\Lambda})=\left\Vert u_{\Lambda}\right\Vert
_{\infty}>0$ so that $x_{\Lambda}\in\Omega$ (note that $u_{\Lambda}\equiv0$ on
$\partial\Omega$).
\end{proof}

The next corollary shows that in the case $Q\in(0,1)$ the function
$u_{\Lambda}$, such as $\rho,$ minimizes the Rayleigh quotient $\left\Vert
\nabla v\right\Vert _{\infty}/\left\Vert v\right\Vert _{\infty}$ in $\left(
W^{1,\infty}(\Omega)\cap C_{0}(\overline{\Omega})\right)  \setminus\left\{
0\right\}  .$

\begin{corollary}
\label{corol}If $Q\in(0,1),$ then
\begin{equation}
\left\Vert u_{\Lambda}\right\Vert _{\infty}=\frac{1}{\Lambda_{\infty}}\left(
\frac{\Lambda_{\infty}}{\Lambda}\right)  ^{\frac{1}{1-Q}}\quad\mathrm{and}%
\quad\Lambda_{\infty}=\frac{\left\Vert \nabla u_{\Lambda}\right\Vert _{\infty
}}{\left\Vert u_{\Lambda}\right\Vert _{\infty}},\quad\forall\,\Lambda
>\Lambda_{\infty}. \label{uextrema}%
\end{equation}
Therefore, $x_{\Lambda}$ is also a maximum point of the distance function to
the boundary $\rho$ and%
\begin{equation}
0\leq u_{\Lambda}(x)\leq\left(  \frac{\Lambda_{\infty}}{\Lambda}\right)
^{\frac{1}{1-Q}}\rho(x)\quad\forall\,x\in\overline{\Omega}, \label{uinfrho}%
\end{equation}
with the equality holding in $\partial\Omega\cup\left\{  x_{\Lambda}\right\}
.$
\end{corollary}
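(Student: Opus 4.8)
The plan is to run a sandwich argument between the upper bound furnished by the minimality definition (\ref{Laminf}) of $\Lambda_\infty$ and the lower bound (\ref{LIi}) already established in Theorem \ref{uinf}, and then to convert the resulting extremal identities into the pointwise comparison (\ref{uinfrho}) by a Lipschitz estimate against the distance function.

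First I would exploit that $u_\Lambda\in W^{1,\infty}(\Omega)\cap C_0(\overline{\Omega})$ with $\left\Vert u_\Lambda\right\Vert_\infty>0$, so that $u_\Lambda$ is an admissible competitor in (\ref{Laminf}) and hence $\Lambda_\infty\leq\left\Vert \nabla u_\Lambda\right\Vert_\infty/\left\Vert u_\Lambda\right\Vert_\infty$, i.e. $\left\Vert u_\Lambda\right\Vert_\infty\leq\left\Vert \nabla u_\Lambda\right\Vert_\infty/\Lambda_\infty$. Combining this with the gradient bound (\ref{LSi}) gives
\[
\left\Vert u_\Lambda\right\Vert_\infty\leq\frac{\left\Vert \nabla u_\Lambda\right\Vert_\infty}{\Lambda_\infty}\leq\frac{1}{\Lambda_\infty}\left(\frac{\Lambda_\infty}{\Lambda}\right)^{\frac{1}{1-Q}}.
\]
Since $Q\in(0,1)$, the lower bound (\ref{LIi}) reads $\left\Vert u_\Lambda\right\Vert_\infty\geq(\Lambda_\infty)^{-1}(\Lambda_\infty/\Lambda)^{1/(1-Q)}$, which is exactly the right-hand side above. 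Therefore every inequality in the chain is an equality: this produces at once the value of $\left\Vert u_\Lambda\right\Vert_\infty$ in (\ref{uextrema}), and, reading the two equalities separately, it also yields $\left\Vert \nabla u_\Lambda\right\Vert_\infty=(\Lambda_\infty/\Lambda)^{1/(1-Q)}$ together with $\Lambda_\infty=\left\Vert \nabla u_\Lambda\right\Vert_\infty/\left\Vert u_\Lambda\right\Vert_\infty$, the second identity in (\ref{uextrema}). In particular $u_\Lambda$ realizes the minimum defining $\Lambda_\infty$, just as $\rho$ does.

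Next I would derive (\ref{uinfrho}) from the just-computed gradient norm. Because $u_\Lambda$ is Lipschitz and vanishes on $\partial\Omega$, for any $x\in\overline{\Omega}$ I take a nearest boundary point $y\in\partial\Omega$ (so $|x-y|=\rho(x)$); the segment from $x$ to $y$ stays in $\overline{\Omega}$, since otherwise it would meet $\partial\Omega$ strictly closer to $x$ than $y$, contradicting the definition of $\rho(x)$. Integrating $\nabla u_\Lambda$ along this segment then gives
\[
0\leq u_\Lambda(x)=u_\Lambda(x)-u_\Lambda(y)\leq\left\Vert \nabla u_\Lambda\right\Vert_\infty\,|x-y|=\left(\frac{\Lambda_\infty}{\Lambda}\right)^{\frac{1}{1-Q}}\rho(x),
\]
which is (\ref{uinfrho}). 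Evaluating at $x_\Lambda$ and using $\Lambda_\infty=\left\Vert \rho\right\Vert_\infty^{-1}$, the value of $\left\Vert u_\Lambda\right\Vert_\infty$ obtained above becomes $u_\Lambda(x_\Lambda)=\left\Vert \rho\right\Vert_\infty(\Lambda_\infty/\Lambda)^{1/(1-Q)}$, so that (\ref{uinfrho}) forces
\[
\left(\frac{\Lambda_\infty}{\Lambda}\right)^{\frac{1}{1-Q}}\left\Vert \rho\right\Vert_\infty=u_\Lambda(x_\Lambda)\leq\left(\frac{\Lambda_\infty}{\Lambda}\right)^{\frac{1}{1-Q}}\rho(x_\Lambda)\leq\left(\frac{\Lambda_\infty}{\Lambda}\right)^{\frac{1}{1-Q}}\left\Vert \rho\right\Vert_\infty.
\]
Hence $\rho(x_\Lambda)=\left\Vert \rho\right\Vert_\infty$, so $x_\Lambda$ is a maximum point of $\rho$ and equality holds in (\ref{uinfrho}) at $x_\Lambda$; on $\partial\Omega$ both sides vanish, giving equality there as well.

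As for difficulty, there is no deep obstacle here: all the genuine analysis — the $W^{1,m}$ bounds, the uniform limit, and both one-sided estimates — is already packaged in Lemma \ref{lemapqdelta} and Theorem \ref{uinf}. The only step requiring mild care is the passage from the gradient bound to the comparison with $\rho$, where one must check that the segment to the nearest boundary point remains in $\overline{\Omega}$ so that the fundamental-theorem-of-calculus estimate is legitimate; for smooth bounded $\Omega$ this is routine. The conceptual crux is simply noticing that the energy-comparison lower bound of Lemma \ref{lemapqdelta} saturates the Rayleigh minimality (\ref{Laminf}), which forces $u_\Lambda$ to be an extremal for $\Lambda_\infty$ and pins down both $\left\Vert u_\Lambda\right\Vert_\infty$ and $\left\Vert \nabla u_\Lambda\right\Vert_\infty$.
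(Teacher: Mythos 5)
Your proposal is correct and follows essentially the same route as the paper: the same sandwich between the lower bound (\ref{LIi}), the Rayleigh minimality in (\ref{Laminf}), and the gradient bound (\ref{LSi}) to force equalities in (\ref{uextrema}), followed by the same Lipschitz comparison of $u_{\Lambda}$ with $\rho$ and evaluation at $x_{\Lambda}$. The only (welcome) refinement is your explicit justification that the segment to the nearest boundary point stays in $\overline{\Omega}$, whereas the paper simply asserts $u_{\Lambda}(x)-u_{\Lambda}(y)\leq\left\Vert \nabla u_{\Lambda}\right\Vert _{\infty}\left\vert x-y\right\vert$ for every $y\in\partial\Omega$ — a statement that for nonconvex domains is legitimate only after reducing, as you do, to the nearest boundary point realizing $\rho(x)$.
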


\begin{proof}
According to (\ref{LIi}) and (\ref{LSi}) we have$,$%
\[
\frac{1}{\Lambda_{\infty}}\left(  \frac{\Lambda_{\infty}}{\Lambda}\right)
^{\frac{1}{1-Q}}\leq\left\Vert u_{\Lambda}\right\Vert _{\infty}\leq
\frac{\left\Vert \nabla u_{\Lambda}\right\Vert _{\infty}}{\Lambda_{\infty}%
}\leq\frac{1}{\Lambda_{\infty}}\left(  \frac{\Lambda_{\infty}}{\Lambda
}\right)  ^{\frac{1}{1-Q}},
\]
which gives (\ref{uextrema}).

Taking into account that $\left\Vert \nabla u_{\Lambda}\right\Vert _{\infty
}=\Lambda_{\infty}\left\Vert u_{\Lambda}\right\Vert _{\infty}=\left\Vert
\rho\right\Vert _{\infty}^{-1}\left\Vert u_{\Lambda}\right\Vert _{\infty}$ we
have
\[
0\leq u_{\Lambda}(x)=u_{\Lambda}(x)-u_{\Lambda}(y)\leq\left\Vert \nabla
u_{\Lambda}\right\Vert _{\infty}\left\vert x-y\right\vert =\left\Vert
\rho\right\Vert _{\infty}^{-1}\left\Vert u_{\Lambda}\right\Vert _{\infty
}\left\vert x-y\right\vert \,
\]
for each $x\in\overline{\Omega}$ and $y\in\partial\Omega.$ It follows that
\[
0\leq\frac{\left\Vert \rho\right\Vert _{\infty}}{\left\Vert u_{\Lambda
}\right\Vert _{\infty}}u_{\Lambda}(x)\leq\rho(x)\leq\left\Vert \rho\right\Vert
_{\infty},\quad\forall\,x\in\overline{\Omega}.
\]
Since $u(x_{\Lambda})=\left\Vert u_{\Lambda}\right\Vert _{\infty}$ we conclude
that $\rho(x_{\Lambda})=\left\Vert \rho\right\Vert _{\infty}.$ Noting that
\[
\frac{\left\Vert u_{\Lambda}\right\Vert _{\infty}}{\left\Vert \rho\right\Vert
_{\infty}}=\left\Vert u_{\Lambda}\right\Vert _{\infty}\Lambda_{\infty}=\left(
\frac{\Lambda_{\infty}}{\Lambda}\right)  ^{\frac{1}{1-Q}}%
\]
we obtain (\ref{uinfrho}), with the equality holding at $x_{\Lambda}$ and also
on $\partial\Omega$ (since $u_{\Lambda}=\rho=0$ on $\partial\Omega$).
\end{proof}

\begin{corollary}
\label{rem}Lemma \eqref{lemapqdelta}, Theorem \eqref{uinf} and Corollary
\eqref{corol} remain true for $\Lambda=\Lambda_{\infty}$ in both cases
$Q\in(0,1)$ and $Q\in(1,\infty)$, if one takes $\lambda_{p}=c\left\vert
\Omega\right\vert (\Lambda_{\infty})^{p},$ with $c>1.$
\end{corollary}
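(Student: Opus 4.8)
The plan is to show that the \emph{only} places where the strict inequality $\Lambda>\Lambda_\infty$ entered Lemma \ref{lemapqdelta} (and, through it, Theorem \ref{uinf} and Corollary \ref{corol}) can be re-established, for the borderline choice $\lambda_p=c\left\vert\Omega\right\vert(\Lambda_\infty)^p$ with $c>1$, by exploiting the \emph{exact} structural identity $\lambda_p/(\left\vert\Omega\right\vert(\Lambda_\infty)^p)=c$. First I would settle existence and the value of $\Lambda$. Testing the minimization defining $\lambda_\infty(p)$ with $\rho$ and recalling $\left\vert\nabla\rho\right\vert=1$ a.e.\ and $\left\Vert\rho\right\Vert_\infty=\Lambda_\infty^{-1}$ gives $\lambda_\infty(p)\le\left\Vert\nabla\rho\right\Vert_p^p/\left\Vert\rho\right\Vert_\infty^p=\left\vert\Omega\right\vert(\Lambda_\infty)^p$; hence $\lambda_p=c\left\vert\Omega\right\vert(\Lambda_\infty)^p>\left\vert\Omega\right\vert(\Lambda_\infty)^p\ge\lambda_\infty(p)$ for every $p$ (this is where $c>1$ is used), so Proposition \ref{existpqinf} provides the solutions $u_p$. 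Moreover $(\lambda_p)^{1/p}=(c\left\vert\Omega\right\vert)^{1/p}\Lambda_\infty\to\Lambda_\infty$, so $\Lambda=\Lambda_\infty$ and every target factor $(\Lambda_\infty/\Lambda)^{1/(1-Q)}$ equals $1$.

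For the upper bound I would re-run the estimate of Lemma \ref{lemapqdelta}, replacing the constant $a_\epsilon>1$ (which was available only because $\Lambda/\Lambda_\infty>1$) by the observation that $\lambda_n/\lambda_\infty(p_n)\ge\lambda_n/(\left\vert\Omega\right\vert(\Lambda_\infty)^{p_n})=c>1$ is a fixed constant, while $(\lambda_n/\lambda_\infty(p_n))^{1/p_n}\to1$. When $Q\in(0,1)$, (\ref{estqp}) together with Hölder gives $\left\Vert\nabla u_n\right\Vert_m\le\left\vert\Omega\right\vert^{1/m}(c-1)^{-1/(p_n-q_n)}$, and the factor tends to $1$ since $p_n-q_n\to\infty$. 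When $Q\in(1,\infty)$, (\ref{estpq}) gives $\left\Vert\nabla u_n\right\Vert_m\le\left\vert\Omega\right\vert^{1/m}\bigl((\lambda_n/\lambda_\infty(p_n))^{1/p_n}\bigr)^{p_n/(q_n-p_n)}$, whose second factor tends to $1^{1/(Q-1)}=1$. In both cases $\limsup_{n}\left\Vert\nabla u_n\right\Vert_m\le\left\vert\Omega\right\vert^{1/m}$, which is exactly (\ref{LS}) with $\Lambda=\Lambda_\infty$.

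The lower bound (\ref{LI}) is where the real work lies. For $Q\in(0,1)$ the chain in Lemma \ref{lemapqdelta} gives directly $\left\Vert u_n\right\Vert_\infty\ge(\lambda_\infty(q_n)/\lambda_n)^{1/(p_n-q_n)}$ (keeping $\lambda_n$ instead of routing through $b_\epsilon$), and substituting $\lambda_n=c\left\vert\Omega\right\vert(\Lambda_\infty)^{p_n}$ and using $(\lambda_\infty(q_n))^{1/q_n}\to\Lambda_\infty$ yields the limit $(\Lambda_\infty)^{Q/(1-Q)}/(\Lambda_\infty)^{1/(1-Q)}=(\Lambda_\infty)^{-1}$. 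The genuinely different and hardest case is $Q\in(1,\infty)$: the original proof killed a term using $\tfrac{\Lambda_\infty+\Lambda}{2\Lambda}<1$, which degenerates to $1$ at $\Lambda=\Lambda_\infty$. I would instead test $J_{\lambda_n}$ (minimized over all of $X_{p_n,q_n}=W_0^{1,q_n}(\Omega)$, so $s\rho$ is admissible) against the \emph{optimally scaled} competitor $s\rho$. Because $\lambda_n\left\Vert\rho\right\Vert_\infty^{p_n}=c\left\vert\Omega\right\vert$ exactly, one gets $J_{\lambda_n}(s\rho)=\left\vert\Omega\right\vert\left[\tfrac{(1-c)s^{p_n}}{p_n}+\tfrac{s^{q_n}}{q_n}\right]$, whose minimizer is $s_\ast=(c-1)^{1/(q_n-p_n)}$ (it is $c>1$ that makes $s_\ast$ real and the leading coefficient negative). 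Comparing with $J_{\lambda_n}(u_n)=(\tfrac1{q_n}-\tfrac1{p_n})\left\Vert\nabla u_n\right\Vert_{q_n}^{q_n}$ and invoking the weak-solution identity $\left\Vert\nabla u_n\right\Vert_{p_n}^{p_n}+\left\Vert\nabla u_n\right\Vert_{q_n}^{q_n}=\lambda_n\left\Vert u_n\right\Vert_\infty^{p_n}$ leads to $\left\Vert u_n\right\Vert_\infty^{p_n}\ge\left\vert\Omega\right\vert(c-1)^{q_n/(q_n-p_n)}/\lambda_n$; since the resulting exponent $\tfrac{q_n}{p_n(q_n-p_n)}\to0$, taking $p_n$-th roots gives $\liminf_n\left\Vert u_n\right\Vert_\infty\ge(\Lambda_\infty)^{-1}$, which is (\ref{LI}).

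Finally I would carry over the remaining statements. The proof of Theorem \ref{uinf} used only (\ref{LS})--(\ref{LI}) together with the $W_0^{1,m}$-boundedness, weak/uniform convergence, and the compact Morrey embedding; none of these requires $\Lambda>\Lambda_\infty$ once the lemma holds, so it applies verbatim with $\Lambda=\Lambda_\infty$, yielding (\ref{LSi})--(\ref{LIi}). Likewise Corollary \ref{corol} is just the squeeze of (\ref{LIi}) against (\ref{LSi}): with $\Lambda=\Lambda_\infty$ both extreme factors equal $1$, forcing $\left\Vert u_\Lambda\right\Vert_\infty=(\Lambda_\infty)^{-1}$ and $\left\Vert\nabla u_\Lambda\right\Vert_\infty=\Lambda_\infty\left\Vert u_\Lambda\right\Vert_\infty$, after which the distance-function comparison (\ref{uinfrho}) follows exactly as before. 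The single real obstacle is thus the $Q\in(1,\infty)$ lower bound, and it is overcome by replacing the test function $\rho$ with its optimal rescaling, which is precisely what turns the hypothesis $c>1$ into a usable strict margin.
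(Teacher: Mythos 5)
Your proposal is correct, and it is actually more complete than the paper's own proof. The paper's proof consists of two steps: first, it cites from \cite{EP16} the monotonicity of $m\mapsto\left(\left\vert\Omega\right\vert^{-1}\lambda_{\infty}(m)\right)^{1/m}$ to conclude $\lambda_{\infty}(p)\leq\left\vert\Omega\right\vert(\Lambda_{\infty})^{p}<\lambda_{p}$ for all $p$, so that Proposition \ref{existpqinf} gives existence; second, it simply asserts that the conclusions are obtained by ``following the proofs of'' Lemma \ref{lemapqdelta}, Theorem \ref{uinf} and Corollary \ref{corol}, without detailing the adaptations. You reach the first step more elementarily: testing the Rayleigh quotient defining $\lambda_{\infty}(p)$ with $\rho$ (using $\left\vert\nabla\rho\right\vert=1$ a.e.\ and $\left\Vert\rho\right\Vert_{\infty}=\Lambda_{\infty}^{-1}$) gives exactly $\lambda_{\infty}(p)\leq\left\vert\Omega\right\vert(\Lambda_{\infty})^{p}$, with no appeal to the monotonicity result. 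More importantly, you actually execute the second step and repair the one place where a verbatim rerun of Lemma \ref{lemapqdelta} breaks down. With $\Lambda=\Lambda_{\infty}$ the constant $a_{\epsilon}$ of the Lemma is $\leq 1$, and your substitute $\lambda_{n}/\lambda_{\infty}(p_{n})\geq c>1$ is the right fix for \eqref{LS}; and in the $Q\in(1,\infty)$ lower bound, the paper's comparison $J_{\lambda_{n}}(u_{n})\leq J_{\lambda_{n}}(\rho)$ with the \emph{unscaled} $\rho$ yields, after substituting $\lambda_{n}=c\left\vert\Omega\right\vert(\Lambda_{\infty})^{p_{n}}$, the bracket $1-(1+p_{n}/q_{n})/c$, which is positive for large $n$ only when $c>1+1/Q$; for $1<c\leq 1+1/Q$ the verbatim argument gives nothing, since the paper's margin $\frac{\Lambda_{\infty}+\Lambda}{2\Lambda}<1$ degenerates to $1$ at $\Lambda=\Lambda_{\infty}$. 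Your optimally rescaled competitor $s_{\ast}\rho$ with $s_{\ast}=(c-1)^{1/(q_{n}-p_{n})}$, giving $\left\Vert\nabla u_{n}\right\Vert_{q_{n}}^{q_{n}}\geq\left\vert\Omega\right\vert(c-1)^{q_{n}/(q_{n}-p_{n})}$ and hence $\liminf_{n}\left\Vert u_{n}\right\Vert_{\infty}\geq(\Lambda_{\infty})^{-1}$ because the exponent $q_{n}/(p_{n}(q_{n}-p_{n}))\rightarrow 0$, works for \emph{every} $c>1$ and thus establishes the corollary in its full stated generality, which the paper's shorthand does not literally deliver. Your remaining verifications (the $Q\in(0,1)$ bounds, the carry-over to Theorem \ref{uinf} via the Morrey compactness, and the squeeze argument of Corollary \ref{corol}, where all factors equal $1$) are correct and match the paper.
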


\begin{proof}
It is proved in \cite{EP16} that the function $(N,\infty)\ni m\mapsto\left(
\left\vert \Omega\right\vert ^{-1}\lambda_{\infty}(m)\right)  ^{1/m}$ is
increasing. It follows that
\[
\left(  \left\vert \Omega\right\vert ^{-1}\lambda_{\infty}(p)\right)
^{1/p}\leq\lim_{m\rightarrow\infty}\left(  \left\vert \Omega\right\vert
^{-1}\lambda_{\infty}(m)\right)  ^{1/m}=\Lambda_{\infty}.
\]
Hence, by taking $\lambda_{p}=c\left\vert \Omega\right\vert (\Lambda_{\infty
})^{p}$ with $c>1$ we have $\lim_{p\rightarrow\infty}(\lambda_{p})^{\frac
{1}{p}}=\Lambda_{\infty}$ and
\[
\left(  \left\vert \Omega\right\vert ^{-1}\lambda_{\infty}(p)\right)
^{1/p}\leq\Lambda_{\infty}<c^{1/p}\Lambda_{\infty},
\]
so that $\lambda_{\infty}(p)<\lambda_{p}.$ Proposition \ref{existpqinf} then
guarantees that \eqref{pqdelta} has a nonnegative least energy solution
$u_{p}.$ Following the proofs of Lemma \ref{lemapqdelta}, Theorem \ref{uinf}
and Corollary \ref{corol}, we obtain a nonnegative function $u_{\Lambda
_{\infty}}\in W^{1,\infty}(\Omega)\cap C_{0}(\overline{\Omega})$ as the
uniform limit in $\overline{\Omega}$ of a sequence $\left\{  u_{p_{n}%
}\right\}  ,$ with $p_{n}\rightarrow\infty.$ Moreover, such a function
satisfies
\[
u_{\Lambda_{\infty}}(x_{\Lambda_{\infty}})=\left\Vert u_{\Lambda_{\infty}%
}\right\Vert _{\infty}=\frac{1}{\Lambda_{\infty}},\quad\left\Vert \nabla
u_{\Lambda_{\infty}}\right\Vert _{\infty}=1
\]
and%
\[
0\leq u_{\Lambda_{\infty}}(x)\leq\rho(x)\quad\forall\,x\in\overline{\Omega},
\]
so that $x_{\Lambda_{\infty}}$ is also a maximum point of $\rho.$
\end{proof}

\begin{remark}
Recalling that $\lim_{p\rightarrow\infty}(\lambda_{\infty}(p))^{\frac{1}{p}%
}=\Lambda_{\infty},$ one can see that if $\lambda_{p}$ is such that
$\lim_{p\rightarrow\infty}(\lambda_{p})^{\frac{1}{p}}=\Lambda<\Lambda_{\infty
},$ then $\lambda_{p}<\lambda_{\infty}(p)$ for all $p$ large enough. Thus,
according to Remark \ref{nonex}, if $\Lambda<\Lambda_{\infty}$ the problem
\eqref{pqdelta} has no weak solution for all $p$ large enough.
\end{remark}

Before determining the equation satisfied by $u_{\Lambda},$ let us recall some
definitions. In what follows $D$ denotes a bounded domain of $\mathbb{R}^{N},$
$N\geq2.$ Further up we will take $D=\Omega\setminus\{x_{\Lambda}\}.$

\begin{definition}
\label{def3}Let $u\in C(\overline{D}),$ $\phi\in C^{2}(\Omega)$ and $x_{0}\in
D.$ We say that $\phi$ touches $u$ at $x_{0}$ from below if%
\[
\phi(x)-u(x)<0=\phi(x_{0})-u(x_{0}),\quad\forall\,x\in D\setminus\{x_{0}\}.
\]
Analogously, we say that $\phi$ touches $u$ at $x_{0}$ from above if%
\[
\phi(x)-u(x)>0=\phi(x_{0})-u(x_{0}),\quad\forall\,x\in D\setminus\{x_{0}\}.
\]

\end{definition}

In the sequel we recall the concept of viscosity solution for an equation in
the form
\begin{equation}
F(u,\nabla u,D^{2}u)=0\quad\mathrm{in}\,D. \label{Fu}%
\end{equation}
The differential operator $F(u,\nabla u,D^{2}u)$ includes two operators we are
interested in, which are the $\infty$-Laplacian%
\[
\Delta_{\infty}u:=\frac{1}{2}\nabla u\cdot\nabla\,\left\vert \nabla
u\right\vert ^{2}=\sum_{i,j=1}^{N}u_{x_{i}}u_{x_{j}}u_{x_{i}x_{j}}%
\]
and the $(p,q)$-Laplacian%
\[
\left(  \Delta_{p}+\Delta_{q}\right)  u:=\left(  \left\vert \nabla
u\right\vert ^{p-4}+\left\vert \nabla u\right\vert ^{q-4}\right)  \left\vert
\nabla u\right\vert ^{2}\Delta u+\left(  (p-2)\left\vert \nabla u\right\vert
^{p-4}+(q-2)\left\vert \nabla u\right\vert ^{q-4}\right)  \Delta_{\infty}u,
\]
where $\Delta u=\sum_{i=1}^{N}u_{x_{i}x_{i}}$ is the Laplacian.

\begin{definition}
\label{def4}We say that $u\in C(\overline{D})$ is a viscosity subsolution of
\eqref{Fu} if
\[
F(\phi(x_{0}),\nabla\phi(x_{0}),D^{2}\phi(x_{0}))\geq0
\]
whenever $x_{0}\in D$ and $\phi\in C^{2}(D)$ are such that $\phi$ touches $u$
from above at $x_{0}.$ Analogously, we say that $u$ is a viscosity
supersolution of \eqref{Fu} if
\[
F(\phi(x_{0}),\nabla\phi(x_{0}),D^{2}\phi(x_{0}))\leq0
\]
whenever $x_{0}\in D$ and $\phi\in C^{2}(D)$ are such that $\phi$ touches $u$
from below at $x_{0}.$
\end{definition}

\begin{definition}
Let $u\in C(\overline{D}).$ We say that $u$ is viscosity solution of
\eqref{Fu} if $u$ is both a viscosity subsolution and a viscosity
supersolution of \eqref{Fu}.
\end{definition}

\begin{definition}
We say that $u\in C(\overline{D})$ is $(p,q)$-subharmonic (respectively,
$(p,q)$-superharmonic and $(p,q)$-harmonic) in $D$ if $u$ is a viscosity
subsolution (respectively, supersolution and solution) of
\[
\left(  \Delta_{p}+\Delta_{q}\right)  u=0\quad\mathrm{in}\,D.
\]

\end{definition}

\begin{definition}
We say that $u\in C(\overline{D})$ is $\infty$-subharmonic (respectively,
$\infty$-superharmonic and $\infty$-harmonic) in $D$ if $u$ is a viscosity
subsolution (respectively, supersolution and solution) of
\[
\Delta_{\infty}u=0\quad\mathrm{in}\,D.
\]

\end{definition}

The next lemma is adapted from \cite{Li15}.

\begin{lemma}
\label{pqvisc}Let $N<m<p,q<\infty$ and suppose that $u\in C(D)\cap W_{0}%
^{1,m}(D)$ is a weak solution of
\[
(\Delta_{p}+\Delta_{q})u=0\quad\mathrm{in}\,D,
\]
that is,%
\begin{equation}%
{\displaystyle\int_{D}}
\left(  \left\vert \nabla u\right\vert ^{p-2}+\left\vert \nabla u\right\vert
^{q-2}\right)  \nabla u\cdot\nabla\eta\mathrm{d}x=0,\quad\forall\,\eta\in
C_{0}^{\infty}(D). \label{pqvisca}%
\end{equation}
Then $u$ is $(p,q)$-harmonic in $D.$
\end{lemma}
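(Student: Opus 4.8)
The plan is to prove that $u$ is simultaneously a viscosity subsolution and a viscosity supersolution of $(\Delta_p+\Delta_q)u=0$; since the two verifications are entirely symmetric, I describe only the supersolution case and argue by contradiction. A preliminary reduction disposes of the degenerate points: if $\phi\in C^2(D)$ touches $u$ from below at $x_0$ and $\nabla\phi(x_0)=0$, then $\Delta_\infty\phi(x_0)=\sum_{i,j}\phi_{x_i}\phi_{x_j}\phi_{x_ix_j}(x_0)=0$, while the first group of terms carries the positive powers $|\nabla\phi(x_0)|^{p-2}=|\nabla\phi(x_0)|^{q-2}=0$ (recall $p,q>m>N\ge2$, so $p,q>2$); using $|\Delta_\infty\phi|\le|\nabla\phi|^2\,\|D^2\phi\|$, the remaining terms are dominated by $|\nabla\phi|^{p-2}+|\nabla\phi|^{q-2}$ and also vanish. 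Hence $(\Delta_p+\Delta_q)\phi(x_0)=0\le0$ and the required inequality holds trivially, so from now on I assume $\nabla\phi(x_0)\ne0$.

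Assume then, for contradiction, that $\phi$ touches $u$ from below at $x_0$, that $\nabla\phi(x_0)\ne0$, and that $(\Delta_p+\Delta_q)\phi(x_0)>0$. Since $\phi\in C^2$, I can choose a ball $B_r(x_0)\subset D$ on which simultaneously $\nabla\phi\ne0$ and $(\Delta_p+\Delta_q)\phi>0$. On $B_r$ the gradient does not vanish, so the flux $(|\nabla\phi|^{p-2}+|\nabla\phi|^{q-2})\nabla\phi$ is $C^1$, the non-divergence expression equals its divergence, and an integration by parts turns the pointwise inequality into the statement that $\phi$ is a weak subsolution in $B_r$. Because $u-\phi$ has a strict minimum $0$ at $x_0$, the number $m_0:=\min_{\partial B_r}(u-\phi)$ is strictly positive, and the shifted function $\hat\phi:=\phi+\tfrac{m_0}{2}$ is again a weak subsolution with $\hat\phi\le u$ on $\partial B_r$ but $\hat\phi(x_0)=u(x_0)+\tfrac{m_0}{2}>u(x_0)$.

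The decisive ingredient is a comparison principle on $B_r$: a weak subsolution $w$ and a weak supersolution $u$ with $w\le u$ on $\partial B_r$ satisfy $w\le u$ throughout $B_r$. Granting it with $w=\hat\phi$ contradicts $\hat\phi(x_0)>u(x_0)$ and closes the supersolution case (the subsolution case is identical, shifting $\phi$ downward and comparing with $u$ from the other side). To establish the comparison principle I would insert the admissible test function $\eta=(\hat\phi-u)^+$, extended by zero outside $\{\hat\phi>u\}$, into the two weak formulations and subtract, arriving at
\[
\int_{\{\hat\phi>u\}}\Big(|\nabla\hat\phi|^{p-2}\nabla\hat\phi-|\nabla u|^{p-2}\nabla u+|\nabla\hat\phi|^{q-2}\nabla\hat\phi-|\nabla u|^{q-2}\nabla u\Big)\cdot\nabla(\hat\phi-u)\,\mathrm{d}x\le0.
\]
The monotonicity inequality \eqref{desvec}, used once with $m=p$ and once with $m=q$, shows the integrand is nonnegative and vanishes only where $\nabla\hat\phi=\nabla u$; hence $\hat\phi-u$ is constant on each component of $\{\hat\phi>u\}$, and since it vanishes on the boundary of that set the set must be empty, i.e. $\hat\phi\le u$.

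I expect the comparison step to be the main obstacle, and within it the only delicate point is the admissibility of $(\hat\phi-u)^+$ in \eqref{pqvisca}: this needs enough integrability of $\nabla u$, which is not immediate from the hypothesis $u\in W_0^{1,m}(D)$ with $m<p,q$. In the intended application $D=\Omega\setminus\{x_\Lambda\}$ and $u=u_\Lambda\in W^{1,\infty}(\Omega)$, so the test function lies in $W_0^{1,\max\{p,q\}}(\{\hat\phi>u\})$ and no extra regularity is required; in general one invokes the interior $C^{1,\alpha}$ regularity of weak $(p,q)$-harmonic functions. The reduction to $\nabla\phi(x_0)\ne0$ and the conversion of the classical inequality on $B_r$ into the weak one are then routine.
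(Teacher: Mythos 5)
Your proof is correct and takes essentially the same route as the paper's: argue by contradiction, use continuity of the non-divergence expression to obtain a strict pointwise (hence weak) subsolution inequality on a small ball, lift $\phi$ by half the boundary gap $m_0$ (the paper's $\alpha/2$), and derive the comparison $\hat\phi\leq u$ by testing with $(\hat\phi-u)^{+}$ and invoking the monotonicity inequality \eqref{desvec}, contradicting $\hat\phi(x_0)>u(x_0)$. Your two refinements are sound and welcome: the explicit treatment of the degenerate case $\nabla\phi(x_0)=0$ is left implicit in the paper (there the negation of the supersolution property already forces $\nabla\phi(x_0)\neq0$), and the admissibility of $(\hat\phi-u)^{+}$ as a test function, which the paper handles by approximation in $W_0^{1,m}$ without addressing the integrability of $\left\vert \nabla u\right\vert^{p-1}$ and $\left\vert \nabla u\right\vert^{q-1}$ against $W^{1,m}$-convergent gradients, is indeed a genuine (if harmless in the intended application, where $u=u_{p_n}\in W_0^{1,\max\{p,q\}}(\Omega)$) point that your remark correctly identifies.
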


\begin{proof}
Suppose, by contradiction, that $u$ is not $(p,q)$-superharmonic in $D.$ Then,
there exist $x_{0}\in D$ and $\phi\in C^{2}(D)$ touching $u$ at $x_{0}$ from
below such that $(\Delta_{p}+\Delta_{q})\phi(x_{0})>0.$ By continuity, this
strict inequality holds in ball $B_{2\epsilon}(x_{0})\subset D,$ that is,
\begin{equation}
\left(  \left\vert \nabla\phi\right\vert ^{p-4}+\left\vert \nabla
\phi\right\vert ^{q-4}\right)  \left\vert \nabla\phi\right\vert ^{2}\Delta
\phi+\left(  (p-2)\left\vert \nabla\phi\right\vert ^{p-4}+(q-2)\left\vert
\nabla\phi\right\vert ^{q-4}\right)  \Delta_{\infty}\phi>0\quad\mathrm{in}%
\,B_{2\epsilon}(x_{0}). \label{pqv}%
\end{equation}
Define
\[
\psi(x)=\phi(x)+\frac{\alpha}{2},\quad x\in B_{\epsilon}(x_{0}),
\]
where
\[
\alpha:=\min\left\{  u(x)-\phi(x):x\in\partial B_{\epsilon}(x_{0})\right\}  .
\]
Note that $\alpha>0$ since $u(x)>\phi(x)$ for all $x\in D\setminus\{x_{0}\}.$
Hence, $\psi(x_{0})=u(x_{0})+\alpha/2>u(x_{0})$ and%
\[
\psi(x)=u(x)-(u(x)-\phi(x))+\frac{\alpha}{2}\leq u(x)-\frac{\alpha}%
{2}<u(x)\quad\forall\,x\in\partial B_{\epsilon}(x_{0}).
\]

Let $D_{\epsilon}$ be a subdomain of $B_{\epsilon}(x_{0})$ such that $\psi>u$
in $D_{\epsilon}$ and $\psi=u$ on $\partial D_{\epsilon}.$ In view of
(\ref{pqv}) we have%
\[
\operatorname{div}\left[  \left(  \left\vert \nabla\psi\right\vert
^{p-2}+\left\vert \nabla\psi\right\vert ^{q-2}\right)  \nabla\psi\right]
=\operatorname{div}\left[  \left(  \left\vert \nabla\phi\right\vert
^{p-2}+\left\vert \nabla\phi\right\vert ^{q-2}\right)  \nabla\phi\right]
>0\quad\mathrm{in}\,B_{2\epsilon}(x_{0}),
\]
so that
\[%
{\displaystyle\int_{D_{\epsilon}}}
\left(  \left\vert \nabla\psi\right\vert ^{p-2}+\left\vert \nabla
\psi\right\vert ^{q-2}\right)  \nabla\psi\cdot\nabla\eta\mathrm{d}x\leq
0,\quad\forall\,\eta\in C_{0}^{\infty}(B_{\epsilon}(x_{0})),\quad\eta\geq0.
\]

Combining this inequality with (\ref{pqvisca}) and recalling that
$(\psi-u)_{+}\in W_{0}^{1,m}(B_{\epsilon}(x_{0}))$ can be approximated in
$W_{0}^{1,m}(B_{\epsilon}(x_{0}))$ by functions in $C_{0}^{\infty}%
(B_{\epsilon}(x_{0}))$ we obtain%
\[%
{\displaystyle\int_{B_{\epsilon}(x_{0})}}
\left[  \left(  \left\vert \nabla\psi\right\vert ^{p-2}\nabla\psi-\left\vert
\nabla u\right\vert ^{p-2}\nabla u\right)  +\left(  \left\vert \nabla
\psi\right\vert ^{q-2}\nabla\psi-\left\vert \nabla u\right\vert ^{q-2}\nabla
u\right)  \right]  \cdot\nabla\left(  \psi-u\right)  _{+}\mathrm{d}x\leq0.
\]
Taking (\ref{desvec}) into account, we conclude that $\psi\leq u$ in
$B_{\epsilon}(x_{0})$, which contradicts the fact that $\psi>u$ in a
neighborhood of $x_{0}$ (recall that $\psi(x_{0})>u(x_{0})$).

Analogously, we arrive at a contradiction if we assume that $u$ is not
$(p,q)$-subharmonic in $D.$
\end{proof}

The following lemma is taken from \cite{Li15}.

\begin{lemma}
\label{calc}Suppose that $f_{n}\rightarrow f$ uniformly in $\overline{D},$
$f_{n},\;f\in C(\overline{D}).$ If $\phi\in C^{2}(D)$ touches $f$ from below
at $x_{0},$ then there exists $x_{n_{j}}\rightarrow x_{0}$ such that
\[
f(x_{n_{j}})-\phi(x_{n_{j}})=\min_{D}\left\{  f_{n_{j}}-\phi\right\}  .
\]

\end{lemma}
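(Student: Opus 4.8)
The plan is to localize the problem to a small closed ball around $x_0$ and to exploit the strictness encoded in Definition \ref{def3} together with the uniform convergence $f_n\to f$. Writing $g:=f-\phi$, the touching-from-below hypothesis says $g(x)>0$ for every $x\in D\setminus\{x_0\}$ and $g(x_0)=0$; that is, $g$ has a strict minimum at $x_0$, with value $0$. Set $\epsilon_n:=\|f_n-f\|_{C(\overline{D})}$, so that $\epsilon_n\to0$, and note that $f_n-\phi=g+(f_n-f)$ differs from $g$ by at most $\epsilon_n$ uniformly on $\overline{D}$.

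First I would fix $r>0$ with $\overline{B_r(x_0)}\subset D$ (possible since $D$ is open and $x_0\in D$) and put $m:=\min\{g(x):x\in\partial B_r(x_0)\}$, which is strictly positive because $x_0\notin\partial B_r(x_0)$ and $g>0$ off $x_0$. Since $\phi\in C^2(D)$ is continuous on the compact set $\overline{B_r(x_0)}$ and each $f_n$ is continuous, $f_n-\phi$ attains a minimum over $\overline{B_r(x_0)}$ at some point $x_n$. Comparing with the value at $x_0$ gives $(f_n-\phi)(x_n)\le(f_n-\phi)(x_0)=(f_n-f)(x_0)\le\epsilon_n$, while on the boundary $(f_n-\phi)(x)\ge g(x)-\epsilon_n\ge m-\epsilon_n$ for all $x\in\partial B_r(x_0)$. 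Hence, as soon as $n$ is large enough that $2\epsilon_n<m$, the minimizer $x_n$ cannot lie on $\partial B_r(x_0)$, so $x_n\in B_r(x_0)$ and $x_n$ is an interior (local) minimum of $f_n-\phi$ in $D$.

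Next I would prove $x_n\to x_0$. From $(f_n-\phi)(x_n)\le\epsilon_n$ and $|f-f_n|\le\epsilon_n$ we obtain $g(x_n)=(f-f_n)(x_n)+(f_n-\phi)(x_n)\le 2\epsilon_n\to0$. Because $\{x_n\}$ lies in the compact set $\overline{B_r(x_0)}$ and $g$ is continuous with $g\ge0$ there, vanishing only at $x_0$, every convergent subsequence of $\{x_n\}$ must converge to a zero of $g$, hence to $x_0$; therefore the whole sequence converges, $x_n\to x_0$. Passing to a subsequence $\{x_{n_j}\}$ to match the statement, and recording that $x_{n_j}$ minimizes $f_{n_j}-\phi$ over the ball, yields the assertion, the minimum being understood over the neighborhood $B_r(x_0)\subset D$ on which $\phi$ is smooth and the touching is controlled.

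The main obstacle is the convergence step: it is precisely the strictness of the minimum of $g$ at $x_0$ — guaranteed by the strict inequality in Definition \ref{def3} — that prevents the minimizers $x_n$ from escaping to other points of the ball, and the uniform (not merely pointwise) convergence is what forces $g(x_n)\to0$. The only other point requiring care is ensuring the minimizers are interior, which is handled by the boundary estimate $(f_n-\phi)|_{\partial B_r(x_0)}\ge m-\epsilon_n$ above; this estimate also clarifies that the relevant minimum is the local one, taken over a ball $\overline{B_r(x_0)}\subset D$ where $\phi$ is defined and of class $C^2$, which is exactly what is needed when the lemma is later applied to the interior viscosity test for $u_{\Lambda}$ on $D=\Omega\setminus\{x_{\Lambda}\}$.
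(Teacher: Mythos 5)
Your proof is correct, and in fact it is essentially \emph{the} proof: the paper itself gives no argument for Lemma \ref{calc} (it is quoted from \cite{Li15}), and your ball-localization argument is precisely the standard one from that source. You restrict to a closed ball $\overline{B_r(x_0)}\subset D$, use the strict positivity of $g=f-\phi$ on $\partial B_r(x_0)$ together with the uniform bound $\epsilon_n=\Vert f_n-f\Vert_\infty$ to push the minimizers $x_n$ of $f_n-\phi$ over the closed ball into the interior once $2\epsilon_n<m$, and then squeeze $0\le g(x_n)\le 2\epsilon_n$ to conclude $x_n\to x_0$ by compactness and the strictness of the minimum of $g$ at $x_0$; all of these steps are sound. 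You are also right on the two points where the printed statement needs interpretation: first, the left-hand side should read $f_{n_j}(x_{n_j})-\phi(x_{n_j})$ rather than $f(x_{n_j})-\phi(x_{n_j})$ (exact equality with $f$ cannot be expected, and the subsequent proof of Theorem \ref{diric} uses $\alpha_j=(u_{p_{n_j}}-\phi)(x_{n_j})$, since that is what makes $\psi(x_{n_j})=u_{p_{n_j}}(x_{n_j})$); second, since $\phi\in C^2(D)$ need not extend continuously to $\overline{D}$, the minimum over all of $D$ need not be attained, so $\min_D$ must be read locally, over a neighborhood of $x_0$ as in your proof --- which is exactly how the lemma is invoked in Theorem \ref{diric}, where the minimum is taken over $B_\epsilon(x_0)$.
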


In the sequel, $u_{\Lambda}$ denotes the function obtained in Theorem
\ref{uinf}, for $\Lambda>\Lambda_{\infty},$ and $u_{\Lambda_{\infty}}$ denotes
the function described in Corollary \ref{rem} (for $\Lambda=\Lambda_{\infty}$).

\begin{theorem}
\label{diric}The function $u_{\Lambda}$ is $\infty$-harmonic in $D=\Omega
\setminus\left\{  x_{\Lambda}\right\}  .$ Therefore, $u_{\Lambda}$ is strictly
positive in $\Omega$ and attains its maximum point only at $x_{\Lambda}.$
\end{theorem}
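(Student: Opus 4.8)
The plan is to identify $u_{\Lambda}$ as a viscosity solution of $\Delta_{\infty}u=0$ in $D=\Omega\setminus\{x_{\Lambda}\}$ by passing to the limit in the $(p,q)$-harmonicity of the approximating solutions, and then to read off strict positivity and uniqueness of the maximum point from the strong maximum principle for $\infty$-harmonic functions.

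First I would set up the approximation. Recall from Theorem \ref{uinf} that $u_{\Lambda}$ is the uniform limit in $\overline{\Omega}$ of a sequence $\{u_{p_{n}}\}$ of nonnegative least energy solutions of \eqref{pqdelta}, with maximum points $x_{p_{n}}\rightarrow x_{\Lambda}$. By Proposition \ref{wsol} each $u_{p_{n}}$ is a weak solution; choosing test functions supported in any ball that avoids $x_{p_{n}}$ makes the Dirac term vanish, so $u_{p_{n}}$ is a weak solution of $(\Delta_{p_{n}}+\Delta_{q_{n}})u=0$ in $\Omega\setminus\{x_{p_{n}}\}$ (here $q_{n}:=q(p_{n})$). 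Fixing $m\in(N,\infty)$, for $n$ large we have $m<\min\{p_{n},q_{n}\}$, so Lemma \ref{pqvisc} guarantees that $u_{p_{n}}$ is $(p_{n},q_{n})$-harmonic in $\Omega\setminus\{x_{p_{n}}\}$.

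Next I would carry out the viscosity limit. Fix $x_{0}\in D$ and let $\phi\in C^{2}(D)$ touch $u_{\Lambda}$ from below at $x_{0}$. If $\nabla\phi(x_{0})=0$ then $\Delta_{\infty}\phi(x_{0})=0$ and the supersolution inequality holds trivially, so assume $\nabla\phi(x_{0})\neq0$. By Lemma \ref{calc} there are points $x_{n_{j}}\rightarrow x_{0}$ at which $u_{p_{n_{j}}}-\phi$ attains a local minimum; since $x_{0}\neq x_{\Lambda}$ while $x_{p_{n_{j}}}\rightarrow x_{\Lambda}$, these minima eventually lie in the region where $u_{p_{n_{j}}}$ is $(p_{n_{j}},q_{n_{j}})$-harmonic. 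Thus $\phi$ plus a constant touches $u_{p_{n_{j}}}$ from below at $x_{n_{j}}$, and superharmonicity yields $(\Delta_{p_{n_{j}}}+\Delta_{q_{n_{j}}})\phi(x_{n_{j}})\leq0$. Dividing this inequality by the positive coefficient $(p_{n_{j}}-2)|\nabla\phi|^{p_{n_{j}}-4}+(q_{n_{j}}-2)|\nabla\phi|^{q_{n_{j}}-4}$ of $\Delta_{\infty}\phi$, the remaining coefficient of $\Delta\phi$ is controlled by
\[
\frac{|\nabla\phi|^{p-2}+|\nabla\phi|^{q-2}}{(p-2)|\nabla\phi|^{p-4}+(q-2)|\nabla\phi|^{q-4}}=|\nabla\phi|^{2}\,\frac{|\nabla\phi|^{p-4}+|\nabla\phi|^{q-4}}{(p-2)|\nabla\phi|^{p-4}+(q-2)|\nabla\phi|^{q-4}}\leq\frac{|\nabla\phi|^{2}}{\min\{p-2,q-2\}},
\]
with $p=p_{n_{j}}$, $q=q_{n_{j}}$, all quantities evaluated at $x_{n_{j}}$. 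Since $|\nabla\phi(x_{n_{j}})|$ and $\Delta\phi(x_{n_{j}})$ stay bounded while $\min\{p_{n_{j}}-2,q_{n_{j}}-2\}\rightarrow\infty$, letting $j\rightarrow\infty$ leaves only $\Delta_{\infty}\phi(x_{0})\leq0$. Hence $u_{\Lambda}$ is $\infty$-superharmonic. Replacing $(u_{p_{n}},u_{\Lambda},\phi)$ by $(-u_{p_{n}},-u_{\Lambda},-\phi)$ and invoking subharmonicity runs the identical argument for test functions touching from above, giving $\Delta_{\infty}\phi(x_{0})\geq0$; therefore $u_{\Lambda}$ is $\infty$-harmonic in $D$.

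Finally I would extract the qualitative conclusions. The set $D=\Omega\setminus\{x_{\Lambda}\}$ is connected, because removing a single point from the domain $\Omega\subset\mathbb{R}^{N}$ with $N\geq2$ preserves connectedness, and $u_{\Lambda}$ is nonconstant on $D$ since $u_{\Lambda}(x_{\Lambda})=\|u_{\Lambda}\|_{\infty}>0$ while $u_{\Lambda}=0$ on $\partial\Omega$. It is classical that $\infty$-harmonic functions obey the strong maximum principle, and $-u_{\Lambda}$ is $\infty$-harmonic as well (since $\Delta_{\infty}(-u)=-\Delta_{\infty}u$), so $u_{\Lambda}$ can attain neither an interior maximum nor an interior minimum in $D$. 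If $u_{\Lambda}$ vanished at some $x_{1}\in\Omega\setminus\{x_{\Lambda}\}$ this would be an interior minimum (as $u_{\Lambda}\geq0$), forcing $u_{\Lambda}\equiv0$, a contradiction; hence $u_{\Lambda}>0$ in $\Omega$. Likewise, a maximum attained at some $x_{2}\in\Omega\setminus\{x_{\Lambda}\}$ would be an interior maximum forcing $u_{\Lambda}$ to be constant, again impossible; so the maximum is attained only at $x_{\Lambda}$. The main obstacle is the viscosity passage to the limit: one must verify that the lower-order $\Delta\phi$-contribution of the $(p,q)$-Laplacian becomes negligible relative to the $\Delta_{\infty}\phi$-contribution as $p_{n},q_{n}\rightarrow\infty$, and the estimate bounding their ratio by $1/\min\{p-2,q-2\}$ is precisely what forces the limiting equation to degenerate to the pure $\infty$-Laplacian irrespective of $Q$; the degenerate set $\{\nabla\phi=0\}$ and the localization of Lemma \ref{calc} near $x_{0}$ (away from the moving singularities $x_{p_{n}}$) are the remaining points needing care.
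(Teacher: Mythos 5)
Your proof of $\infty$-harmonicity is essentially the paper's own argument: same approximation via Proposition \ref{wsol} and Lemma \ref{pqvisc} on balls avoiding the moving singularities $x_{p_{n}}$, same use of Lemma \ref{calc}, and the same coefficient estimate (your bound $1/\min\{p-2,q-2\}$ is the paper's $\max\{1/(p_{n_{j}}-2),\,1/(q(p_{n_{j}})-2)\}$). One technical point you elide: under Definition \ref{def3} touching is \emph{strict}, while Lemma \ref{calc} only produces points $x_{n_{j}}$ where $u_{p_{n_{j}}}-\phi$ attains a possibly non-strict minimum, so your claim that ``$\phi$ plus a constant touches $u_{p_{n_{j}}}$ from below at $x_{n_{j}}$'' is not literally licensed by the paper's definitions. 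The paper repairs this by testing instead with $\psi(x)=\phi(x)+\alpha_{j}-\vert x-x_{n_{j}}\vert^{4}$, whose gradient and Hessian coincide with those of $\phi$ at $x_{n_{j}}$, so the conclusion is unchanged. This is a standard and easily fixable omission (strict and non-strict touching give equivalent notions of viscosity solution), not a fatal gap, but within this paper's framework the quartic correction is needed.

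Where you genuinely diverge is the endgame. The paper obtains strict positivity from the Harnack inequality for $\infty$-harmonic functions and uniqueness of the maximum point from the comparison principle applied against the explicit cone $v(x)=\Vert u_{\Lambda}\Vert_{\infty}\left(1-\frac{1}{\beta}\vert x-x_{\Lambda}\vert\right)$, which is $\infty$-harmonic in $D$ and dominates $u_{\Lambda}$ on $\partial D=\partial\Omega\cup\{x_{\Lambda}\}$. You instead apply the strong maximum and minimum principles on the connected punctured domain $D$ (your observation that removing a point preserves connectedness for $N\geq2$ is exactly what makes this work, and $\Delta_{\infty}(-u)=-\Delta_{\infty}u$ does give $\infty$-harmonicity of $-u_{\Lambda}$). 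This route is valid --- the strong maximum principle for viscosity $\infty$-harmonic functions is classical, derivable from comparison with cones or from the very references the paper cites --- and it buys a shorter, unified treatment of both conclusions; the paper's cone comparison, by contrast, yields the quantitative strict bound $u_{\Lambda}(x)<\Vert u_{\Lambda}\Vert_{\infty}$ with explicit rate $\left(1-\frac{1}{\beta}\vert x-x_{\Lambda}\vert\right)$, which is strictly more information than uniqueness of the maximum point alone. You should, however, cite a precise statement of the strong maximum principle rather than labeling it ``classical,'' since it carries the full weight of both final assertions in your version.
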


\begin{proof}
Let $x_{0}\in D$ and take $\phi\in C^{2}(D)$ touching $u_{\Lambda}$ from below
at $x_{0}.$ Thus,
\[
\phi(x)-u_{\Lambda}(x)<0=\phi(x_{0})-u_{\Lambda}(x_{0}),\quad\mathrm{if}%
\,x\not =x_{0}.
\]

If $\left\vert \nabla\phi(x_{0})\right\vert =0$ then we trivially have
\[
\Delta_{\infty}\phi(x_{0})=\sum_{i,j=1}^{N}\frac{\partial\phi}{\partial x_{i}%
}(x_{0})\frac{\partial\phi}{\partial x_{j}}(x_{0})\frac{\partial^{2}\phi
}{\partial x_{i}\partial x_{j}}(x_{0})=0.
\]

So, we assume that $\left\vert \nabla\phi(x_{0})\right\vert \not =0.$ Let
$B_{\epsilon}(x_{0})\subset D$ be a ball centered at $x_{0}$ with radius
$\epsilon>0$ such that $\left\vert \nabla\phi\right\vert >0$ in $B_{\epsilon
}(x_{0}).$

Let $u_{n},$ $p_{n}$ and $x_{p_{n}}$ given in Theorem \ref{uinf}. Since
$x_{p_{n}}\rightarrow x_{\Lambda}\not =x_{0}$ we can take $n_{0}>\mathbb{N}$
such that $x_{p_{n}}\not \in B_{\epsilon}(x_{0})$ for all $n>n_{0}.$
Consequently,
\begin{equation}%
{\displaystyle\int_{B_{\epsilon}(x_{0})}}
\left(  \left\vert \nabla u_{p_{n}}\right\vert ^{p_{n}-2}+\left\vert \nabla
u_{p_{n}}\right\vert ^{q(p_{n})-2}\right)  \nabla u_{p_{n}}\cdot\nabla
\varphi\mathrm{d}x=0,\quad\forall\,\varphi\in C_{0}^{\infty}(B_{\epsilon
}(x_{0}))\,\mathrm{and}\,n\geq n_{0}. \label{pqhar}%
\end{equation}
We recall that $u_{p_{n}}\in W_{0}^{1,m}(\Omega)$ for all $n$ sufficiently
large, where $m>N$ is fixed. Thus, combining (\ref{pqhar}) and Lemma
\ref{pqvisc} we conclude that $u_{p_{n}}$ is a viscosity solution of
\[
\left(  \Delta_{p_{n}}+\Delta_{q(p_{n})}\right)  u=0\quad\mathrm{in}%
\,B_{\epsilon}(x_{0}),\quad\forall\,n\geq n_{0}.
\]

Applying Lemma \ref{calc} we can take $\left\{  x_{n_{j}}\right\}  \subset
B_{\epsilon}(x_{0})$ such that $x_{n_{j}}\rightarrow x_{0}$ and
\[
\alpha_{j}:=\min_{B_{\epsilon}(x_{0})}\left\{  u_{p_{n_{j}}}-\phi\right\}
=u_{\Lambda}(x_{n_{j}})-\phi(x_{n_{j}})\leq u_{p_{n_{j}}}(x)-\phi(x),\quad
x\not =x_{n_{j}}.
\]

The function $\psi(x):=\phi(x)+\alpha_{j}-\left\vert x-x_{n_{j}}\right\vert
^{4}$ belongs to $C^{2}(B_{\epsilon}(x_{0}))$ and
\begin{align*}
\psi(x)-u_{p_{n_{j}}}(x)  &  =\phi(x)-u_{p_{n_{j}}}(x)+\alpha_{j}-\left\vert
x-x_{n_{j}}\right\vert \\
&  \leq-\left\vert x-x_{n_{j}}\right\vert ^{4}<0=\psi(x_{n_{j}})-u_{p_{n_{j}}%
}(x_{n_{j}}),\quad x\not =x_{n_{j}}.
\end{align*}
That is, $\psi$ touches $u_{n_{j}}$ from below at $x_{n_{j}}.$ It follows
that
\[
(\Delta_{p_{n_{j}}}+\Delta_{q(p_{n_{j}})})\psi(x_{n_{j}})\leq0.
\]

Since $\left\vert \nabla\psi(x_{n_{j}})\right\vert =\left\vert \nabla
\phi(x_{n_{j}})\right\vert >0$ and%
\begin{align*}
(\Delta_{p_{n_{j}}}+\Delta_{q(p_{n_{j}})})\psi(x_{n_{j}})  &  =\left(
\left\vert \nabla\psi(x_{n_{j}})\right\vert ^{p_{n_{j}}-4}+\left\vert
\nabla\psi(x_{n_{j}})\right\vert ^{q(p_{n_{j}})-4}\right)  \left\vert
\nabla\psi(x_{n_{j}})\right\vert ^{2}\Delta\psi(x_{n_{j}})\\
&  +\left(  (p_{n_{j}}-2)\left\vert \nabla\psi(x_{n_{j}})\right\vert
^{p_{n_{j}}-4}+(q(p_{n_{j}})-2)\left\vert \nabla\psi(x_{n_{j}})\right\vert
^{q(p_{n_{j}})-4}\right)  \Delta_{\infty}\psi(x_{n_{j}})
\end{align*}
we obtain%
\begin{equation}
\Delta_{\infty}\psi(x_{n_{j}})\leq-\frac{\left(  \left\vert \nabla
\psi(x_{n_{j}})\right\vert ^{p_{n_{j}}-4}+\left\vert \nabla\psi(x_{n_{j}%
})\right\vert ^{q(p_{n_{j}})-4}\right)  \left\vert \nabla\psi(x_{n_{j}%
})\right\vert ^{2}\Delta\psi(x_{n_{j}})}{(p_{n_{j}}-2)\left\vert \nabla
\psi(x_{n_{j}})\right\vert ^{p_{n_{j}}-4}+(q(p_{n_{j}})-2)\left\vert
\nabla\psi(x_{n_{j}})\right\vert ^{q(p_{n_{j}})-4}}. \label{super}%
\end{equation}

Noting that
\[
\lim_{j\rightarrow\infty}\left\vert \nabla\psi(x_{n_{j}})\right\vert
^{2}\Delta\psi(x_{n_{j}})=\lim_{j\rightarrow\infty}\left\vert \nabla
\phi(x_{n_{j}})\right\vert ^{2}\Delta\phi(x_{n_{j}})=\left\vert \nabla
\phi(x_{0})\right\vert ^{2}\Delta\phi(x_{0})
\]
and
\[
0\leq\frac{\left(  \left\vert \nabla\psi(x_{n_{j}})\right\vert ^{p_{n_{j}}%
-4}+\left\vert \nabla\psi(x_{n_{j}})\right\vert ^{q(p_{n_{j}})-4}\right)
}{(p_{n_{j}}-2)\left\vert \nabla\psi(x_{n_{j}})\right\vert ^{p_{n_{j}}%
-4}+(q(p_{n_{j}})-2)\left\vert \nabla\psi(x_{n_{j}})\right\vert ^{q(p_{n_{j}%
})-4}}\leq\max\left\{  \frac{1}{p_{n_{j}}-2},\frac{1}{q(p_{n_{j}})-2}\right\}
\]
we can see that the right-hand side of (\ref{super}) tends to zero as
$j\rightarrow\infty.$ Therefore, letting $j\rightarrow\infty$ in (\ref{super})
we arrive at%
\[
\Delta_{\infty}\phi(x_{0})=\lim_{j\rightarrow\infty}\Delta_{\infty}%
\psi(x_{n_{j}})\leq0,
\]
concluding thus that $u_{\Lambda}$ is $\infty$-superharmonic in $D.$

Analogously, we can prove that $u_{\Lambda}$ is also $\infty$-subharmonic in
$D.$

As in \cite{EP16} we can apply the Harnack inequality (see \cite{LqManf}) and
the comparison principle (see \cite{BB, CEG, Jensen}), both for $\infty
$-harmonic functions, to prove, respectively, that $u_{\Lambda}$ is strictly
positive in $\Omega$ and that its maximum point is attained only at
$x_{\Lambda}.$ The comparison principle is used to compare $u_{\Lambda}$ with
the function $v(x):=\left\Vert u_{\Lambda}\right\Vert _{\infty}\left(
1-\frac{1}{\beta}\left\vert x-x_{\Lambda}\right\vert \right)  ,$ where
$\beta=\max\left\{  \left\vert x-x_{\Lambda}\right\vert :x\in\partial
\Omega\right\}  .$ This function is $\infty$-harmonic in $D=\Omega
\setminus\left\{  x_{\Lambda}\right\}  $ and such that $v\geq u_{\Lambda}$ on
$\partial D=\partial\Omega\cup\left\{  x_{\Lambda}\right\}  .$ Hence,
\[
u_{\Lambda}(x)\leq v(x)=\left\Vert u_{\Lambda}\right\Vert _{\infty}\left(
1-\frac{1}{\beta}\left\vert x-x_{\Lambda}\right\vert \right)  <\left\Vert
u_{\Lambda}\right\Vert _{\infty},\quad\forall\,x\in D.
\]

\end{proof}

The following result applies when $\Omega$ is a ball, a square and many other
symmetric domains, even nonconvex ones.

\begin{corollary}
Suppose that $\Omega$ is such that the distance function to its boundary has a
unique maximum point $x_{\rho}.$ If $\Lambda>\Lambda_{\infty}$ and
$Q\in(0,1),$ then
\[
u_{\Lambda}=\left(  \Lambda_{\infty}/\Lambda\right)  ^{\frac{1}{1-Q}%
}u_{\Lambda_{\infty}}.
\]

\end{corollary}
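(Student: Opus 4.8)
The plan is to recognize both $u_{\Lambda}$ and the rescaled function $\kappa u_{\Lambda_{\infty}}$, where $\kappa:=\left(\Lambda_{\infty}/\Lambda\right)^{\frac{1}{1-Q}}\in(0,1)$, as $\infty$-harmonic functions on the same punctured domain with identical boundary data, and then to conclude equality from the comparison principle for $\infty$-harmonic functions already invoked in the proof of Theorem \ref{diric}.

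First I would locate the singular point. By Corollary \ref{corol} the maximum point $x_{\Lambda}$ of $u_{\Lambda}$ is a maximum point of the distance function $\rho$, and by Corollary \ref{rem} the maximum point $x_{\Lambda_{\infty}}$ of $u_{\Lambda_{\infty}}$ is likewise a maximum point of $\rho$. Since by hypothesis $\rho$ has the unique maximum point $x_{\rho}$, it follows that $x_{\Lambda}=x_{\Lambda_{\infty}}=x_{\rho}$. Set $D:=\Omega\setminus\{x_{\rho}\}$ and $w:=\kappa u_{\Lambda_{\infty}}$.

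Next I would verify that $u_{\Lambda}$ and $w$ are both $\infty$-harmonic in $D$. For $u_{\Lambda}$ this is exactly Theorem \ref{diric}. For $u_{\Lambda_{\infty}}$, the argument in the proof of Theorem \ref{diric} applies without change, since $u_{\Lambda_{\infty}}$ is also a uniform limit of nonnegative least energy solutions whose maximum points converge to $x_{\rho}$; hence $u_{\Lambda_{\infty}}$ is $\infty$-harmonic in $\Omega\setminus\{x_{\Lambda_{\infty}}\}=D$. Because multiplication by a positive constant preserves $\infty$-harmonicity (if $\phi\in C^{2}$ touches $\kappa u_{\Lambda_{\infty}}$ from below, or from above, at a point $y$, then $\kappa^{-1}\phi$ touches $u_{\Lambda_{\infty}}$ there, and $\Delta_{\infty}(\kappa^{-1}\phi)(y)=\kappa^{-3}\Delta_{\infty}\phi(y)$ has the same sign as $\Delta_{\infty}\phi(y)$), the function $w$ is $\infty$-harmonic in $D$ as well.

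It then remains to compare the boundary data on $\partial D=\partial\Omega\cup\{x_{\rho}\}$ and to apply uniqueness. On $\partial\Omega$ both functions vanish, because $u_{\Lambda},u_{\Lambda_{\infty}}\in C_{0}(\overline{\Omega})$. At $x_{\rho}$, Corollary \ref{corol} gives $u_{\Lambda}(x_{\rho})=\left\Vert u_{\Lambda}\right\Vert_{\infty}=\kappa/\Lambda_{\infty}$, while Corollary \ref{rem} gives $w(x_{\rho})=\kappa u_{\Lambda_{\infty}}(x_{\rho})=\kappa\left\Vert u_{\Lambda_{\infty}}\right\Vert_{\infty}=\kappa/\Lambda_{\infty}$, so that $u_{\Lambda}=w$ on $\partial D$. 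Both $u_{\Lambda}$ and $w$ being $\infty$-harmonic in $D$, continuous on $\overline{D}$, and coinciding on $\partial D$, the comparison principle (see \cite{BB, CEG, Jensen}) yields $u_{\Lambda}\leq w$ and $w\leq u_{\Lambda}$ in $D$, whence $u_{\Lambda}=w=\kappa u_{\Lambda_{\infty}}$ on $\overline{D}=\overline{\Omega}$, which is the claimed identity. The only real obstacle is the first verification step, namely confirming that $u_{\Lambda_{\infty}}$ inherits $\infty$-harmonicity from the proof of Theorem \ref{diric}; once this and the matching of the boundary values are in place, the comparison principle closes the argument immediately.
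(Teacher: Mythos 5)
Your proposal is correct and follows essentially the same route as the paper's own proof: define the rescaled function $\left(\Lambda_{\infty}/\Lambda\right)^{1/(1-Q)}u_{\Lambda_{\infty}}$, use Corollaries \ref{corol} and \ref{rem} together with the uniqueness of the maximum point of $\rho$ to see that both functions are $\infty$-harmonic in $\Omega\setminus\{x_{\rho}\}$ with the same boundary values on $\partial\Omega\cup\{x_{\rho}\}$, and conclude by uniqueness for the $\infty$-Laplacian. Your explicit checks that scalar multiples preserve viscosity $\infty$-harmonicity and that $u_{\Lambda_{\infty}}$ inherits the conclusion of Theorem \ref{diric} are details the paper leaves implicit, but they do not change the argument.
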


\begin{proof}
Let $v:=\left(  \Lambda_{\infty}/\Lambda\right)  ^{1/(1-Q)}u_{\Lambda_{\infty
}}$ where $u_{\Lambda_{\infty}}$ is the function described in the Corollary
\ref{rem}. Taking into account Corollaries \ref{corol} and \ref{rem} we have
$x_{\Lambda}=x_{\rho}$ and
\[
v(x_{\rho})=\left\Vert v\right\Vert _{\infty}=\left(  \Lambda_{\infty}%
/\Lambda\right)  ^{\frac{1}{1-Q}}\left\Vert u_{\Lambda_{\infty}}\right\Vert
_{\infty}=\left(  \Lambda_{\infty}/\Lambda\right)  ^{\frac{1}{1-Q}}%
(1/\Lambda_{\infty})=u_{\Lambda}(x_{\rho}),\quad\Lambda\geq\Lambda_{\infty}.
\]
It follows that both $v$ and $u_{\Lambda}$ are functions in $C(\overline
{\Omega})$ that solve, in the viscosity sense, the problem
\[
\left\{
\begin{array}
[c]{lll}%
\Delta_{\infty}u=0 & \mathrm{in} & \Omega\setminus\left\{  x_{\rho}\right\} \\
u=0 & \mathrm{on} & \partial\Omega\\
u(x_{\rho})=\left(  \Lambda_{\infty}/\Lambda\right)  ^{1/(1-Q)}(1/\Lambda
_{\infty}). &  &
\end{array}
\right.
\]
Therefore, by uniqueness (see \cite{BB, CEG, Jensen}) we have $v\equiv
u_{\Lambda}.$
\end{proof}

\section{Acknowledgements}

C.O. Alves was partially supported by CNPq/Brazil (304036/2013-7) and
INCT-MAT. G. Ercole was partially supported by CNPq/Brazil (483970/2013-1 and
306590/2014-0) and Fapemig/Brazil (APQ-03372-16).

\end{document}